\documentclass{amsart}
\usepackage{amsmath,amsfonts,amssymb,amsthm,epsfig,amscd,epsfig,psfrag,comment}
\usepackage[all]{xy}

\addtolength{\hoffset}{-1.5cm}
\addtolength{\textwidth}{3cm}

\newtheorem{Theorem}{Theorem}[section]
\newtheorem{Proposition}[Theorem]{Proposition} 
\newtheorem{Lemma}[Theorem]{Lemma}

\newtheorem{Corollary}[Theorem]{Corollary}

\theoremstyle{definition}

\newtheorem{Remark}[Theorem]{Remark}

\newcommand{\p}{{\mathbb{P}}}
\newcommand{\Z}{\mathbb{Z}}
\newcommand{\C}{\mathbb{C}}

\def\bA{{\mathbb{A}}}
\def\k{{\mathbb{C}}}
\def\id{{\mathrm{Id}}}

\def\sl{{\mathfrak{sl}}}
\def\g{{\mathfrak{g}}}

\def\l{{\lambda}}
\def\E{{\sf{E}}}
\def\F{{\sf{F}}}

\def\sF{{\mathcal{F}}}
\def\sK{{\mathcal{K}}}

\def\G{{\mathbb{G}}}

\def\T{{\sf{T}}}

\def\tY{{\tilde{Y}}}

\def\tB{{\tilde{B}}}

\def\sL{{\mathcal{L}}}
\def\sP{{\mathcal{P}}}

\def\sQ{{\mathcal{Q}}}

\newcommand{\Cone}{\mathrm{Cone}}
\def\O{{\mathcal O}}
\def\sE{{\mathcal{E}}}
\def\H{{\mathcal{H}}}

\def\dim{\mbox{dim}}
\def\codim{\mbox{codim}}

\newcommand{\base}{\mathbb{C}[q,q^{-1}]}

\newcommand{\vect}{\C((z))^m}
\newcommand{\Gr}{\mathrm{Gr}}

\DeclareMathOperator{\spn}{span}
\DeclareMathOperator{\Hom}{Hom}

\DeclareMathOperator{\supp}{supp}
\DeclareMathOperator{\End}{End}

\DeclareMathOperator{\flip}{Flip}

\begin{document}

\title{Categorical Geometric skew Howe duality}

\author{Sabin Cautis}
\email{scautis@math.harvard.edu}
\address{Department of Mathematics\\ Rice University \\ Houston, TX}

\author{Joel Kamnitzer}
\email{jkamnitz@math.toronto.edu}
\address{Department of Mathematics\\ University of Toronto \\ Toronto, ON}

\author{Anthony Licata}
\email{amlicata@math.stanford.edu}
\address{Department of Mathematics\\ Stanford University \\ Palo Alto, CA}

\begin{abstract}
We categorify the R-matrix isomorphism between tensor products of minuscule representations of 
$U_q( \sl_n) $ by constructing an equivalence between the derived categories of coherent sheaves on the corresponding convolution products in the affine Grassmannian.  The main step in the construction is a categorification of representations of $U_q( \sl_2)$ which are related to representations of $U_q(\sl_n)$ by quantum skew Howe duality.  The resulting equivalence is part of the program of algebro-geometric categorification of Reshitikhin-Turaev tangle invariants developed by the first two authors.
\end{abstract}

\date{\today}
\maketitle
\tableofcontents

\section{Introduction}

This paper is part of the program laid out in \cite{ck2} for categorifying Reshitikhin-Turaev tangle invariants  using geometric representation theory.  Let us briefly recall the main idea explained in the introduction to that paper.  Let $G$ be a complex reductive algebraic group.  If $ (\lambda_1, \dots, \lambda_n) $ is a sequence of dominant minuscule weights of $ G $, then there is a smooth projective variety $ \Gr_{\lambda_1} \tilde{\times} \cdots \tilde{\times} \Gr_{\lambda_n} $ (constructed using the affine Grassmannian of the Langlands dual group) whose cohomology is naturally isomorphic to $ V_{\lambda_1} \otimes \cdots \otimes V_{\lambda_n} $ (see \cite{MVi}).  The Reshitikhin-Turaev tangle invariants are maps between various $ V_{\lambda_1} \otimes \cdots \otimes V_{\lambda_n} $ (or their quantum analogs).  We proposed in \cite{ck2} to lift these linear maps to functors between the derived categories of coherent sheaves on the varieties $ \Gr_{\lambda_1} \tilde{\times} \cdots \tilde{\times} \Gr_{\lambda_n} $. 

A first step in this direction is to define a functor corresponding to a crossing on two strands.  This will be an equivalence of derived categories
\begin{equation*}
D(\Gr_{\lambda} \tilde{\times} \Gr_\mu) \rightarrow D(\Gr_\mu \tilde{\times} \Gr_\lambda)
\end{equation*}
which lifts the isomorphism $ V_\lambda \otimes V_\mu \rightarrow V_\mu \otimes V_\lambda $ of Grothendieck groups.  The main purpose of this paper is to construct such an equivalence when $ \lambda, \mu $ are minuscule representation of $ SL_m $. We will use the notation $ Y(k,l) := \Gr_{\omega_k} \tilde{\times} \Gr_{\omega_l}$. A concrete definition of $ Y(k,l) $ is given in section \ref{se:varieties}.

Directly constructing such an equivalence turns out to be surprisingly difficult, even though there is a natural correspondence $ Z(k,l) $ between the two varieties $ Y(k,l) $ and $ Y(l,k)$.  As one indication of difficulty, consider the case $k=l=2$, $ m=4$; in this case there is an open subset of $ Y(2,2) $ isomorphic to the cotangent bundle of the Grassmannian of two-dimensional subspaces in $ \C^4$, $ T^* \G(2,4) $, and it was shown by Namikawa \cite{nam2} that the natural correspondence between $ T^* \G(2,4) $ and itself does not give an equivalence of derived categories. Hence in order to construct the equivalence, we use a more indirect approach involving categorical $\sl_2$ actions.  To explain how this $\sl_2$ action enters the picture, we must first describe geometric skew Howe duality.

\subsection{Geometric skew Howe duality}

Consider the vector space $ \Lambda^N (\C^2 \otimes \C^m)$.  This vector space has commuting actions of 
both $ SL_2 $ and $ SL_m $, each of which is the others commutant.  This fact is known as skew Howe duality.

The skew Howe duality gives us a surjection
\begin{equation*}
U \sl_2 \twoheadrightarrow \End_{SL_m} \big(\Lambda^N (\C^2 \otimes \C^m) \big).
\end{equation*}
We can decompose $ \Lambda^N(\C^2 \otimes \C^m) $ into weight spaces for the maximal torus in $ SL_2 $ as
\begin{equation*}
\Lambda^N(\C^2 \otimes \C^m) = \bigoplus_{k + l = N} \Lambda^k \C^m \otimes \Lambda^l \C^m.
\end{equation*}
It is easy to see that (up to sign), the action of $  t = \left( \begin{smallmatrix} 0 & 1 \\ -1 & 0 \end{smallmatrix} \right) \in SL_2 $ on $ \Lambda^N(\C^2 \otimes \C^m) $ permutes the tensor factors  on the right hand side of this equation.  

A geometric setup related to this skew Howe duality was described by Mirkovic-Vybornov \cite{MVy}.  Under the geometric Satake correspondence, the $SL_2$ representation $ \Lambda^N (\C^2 \otimes \C^m) $ corresponds to the perverse sheaf
\begin{equation*}
\bigoplus_{k+l = N} \pi_* \C_{Y(k,l)}
\end{equation*}
on a piece $\Gr$ of the affine Grassmannian. Here $\C_{Y(k,l)}$ denotes the constant sheaf on $Y(k,l)$ and $ \pi $ denotes the map $ Y(k,l) \rightarrow \Gr $ (described in section \ref{se:varieties}).  Since the geometric Satake correspondence is an equivalence of categories, endomorphisms of $\Lambda^N(\C^2 \otimes \C^m)$ go to endomorphisms of $\bigoplus_{k+l=N} \pi_* \C_{Y(k,l)}$ and we get a surjection
\begin{equation*}
U \sl_2 \twoheadrightarrow \End \big( \bigoplus_{k+l = N} \pi_* \C_{Y(k,l)}) = \bigoplus_{k+l = N = k'+l'} H_{top} (Y(k,l) \times_{\Gr} Y(k',l') \big).
\end{equation*}
(The second equality follows from the fact that $ \pi $ is semi-small -- see \cite[Theorem 8.5.7]{CG}.)

It follows that $\sl_2$ acts on the cohomology $\oplus_{k+l=N} H^*(Y(k,l))$ , with the action of the generators $E, F$ coming from the correspondences $ W(k,l) $ described in section \ref{se:functors}.  Mirkovic-Vybornov \cite{MVy} explained that when restricted to certain transverse slices, this action of $ \sl_2 $ becomes the action studied by Ginzburg \cite[Chapter 4]{CG} and Nakajima \cite{Nak} (Mirkovic-Vybornov also considered a more general setup where $ \C^2 $ and $ \sl_2 $ are replaced by $ \C^n $ and $\sl_n$).

\subsection{Categorical $ \sl_2 $ actions and construction of the equivalence}

The additional structure of an action of $ \sl_2 $ on $ \oplus H^*(Y(k,l)) $  suggests the following strategy for constructing our desired equivalence, inspired by the work of Chuang-Rouquier \cite{CR}.  First, construct an action of $ \sl_2 $ on $ \oplus_{k,l} D(Y(k,l)) $. Then use this action to construct an action of the reflection element $ t \in SL_2 $ on $ \oplus_{k,l} D(Y(k,l)) $. This gives us an equivalence $\T : D(Y(k,l)) \xrightarrow{\sim} D(Y(l,k))$ lifting the isomorphism $\Lambda^k \C^m \otimes \Lambda^l \C^m \xrightarrow{\sim} \Lambda^l \C^m \otimes \Lambda^k \C^m$ of $ \sl_2 $ weight spaces. 

More precisely, we construct a geometric categorical $\sl_2$ action using the varieties $Y(k,l)$ (see Theorem \ref{thm:main}). We introduced the notion of a geometric categorical $\sl_2$ action in \cite{ckl1} (but review it in section \ref{se:geomsl2def} of this paper). Roughly speaking, this means that we have functors $ \E, \F$ between the categories $ D(Y(k,l)) $ which obey the relations in the Lie algebra $ \sl_2 $ at the level of cohomology of complexes.  An additional part of the data of a geometric categorical action are compatible deformations $\tY(k,l)$ of the varieties $Y(k,l)$; these deformations play a crucial role in lifting the $ \sl_2 $ relations from isomorphisms at the level of cohomology to genuine isomorphisms of functors.

Using the main result of \cite{ckl1}, this geometric categorical $\sl_2$ action induces a strong categorical $\sl_2$ action on the categories $ D(Y(k,l))$.  In \cite{ckl2} (following ideas of Chuang-Rouquier \cite{CR}), we proved that a strong categorical $\sl_2$ action allows one to lift the reflection element $ t \in SL_2 $ to obtain equivalences.  Applying this to our situation we obtain the desired equivalence $ \T : D(Y(k,l)) \xrightarrow{\sim} D(Y(l,k)) $ (see Theorem \ref{thm:equivalence}).  

We work throughout with $\C^\times$-equivariant derived categories.  In particular the Grothedieck groups are $ \base $ modules and we construct an isomorphism of $ \C[q,q^{-1}] $-modules $ K(D(Y(k,l))) \rightarrow \Lambda_q^k(\C^m) \otimes \Lambda_q^l(\C^m) $ where $ \Lambda_q^k(\C^m) $ denotes the $ U_q(\sl_m) $ module corresponding to $ \Lambda^k(\C^m)$.  

The skew Howe duality picture described above extends to the quantum setting.  Using a modification of work of Toledano Laredo on the quantum Howe duality, we show that on the level of Grothendieck groups, our equivalence $ \T $ induces the braiding isomorphism $ \Lambda_q^k(\C^m) \otimes \Lambda_q^l(\C^m) \rightarrow \Lambda_q^l(\C^m) \otimes \Lambda_q^k(\C^m) $ up to multiplication by scalars (Theorem \ref{thm:equivalence}).  In the case when $ k, l \in \{ 1, m -1 \} $, this gives further evidence for Conjecture 7.1 from \cite{ck2} -- with some care it should lead to a proof of that conjecture.

\subsection{Categorical $\g$ actions} 

This paper also serves as the first step in constructing categorical $\g$ actions in algebro-geometric settings where $\g$ is an arbitrary Kac-Moody Lie algebra. In \cite{ckl3} we will construct a geometric categorical $\g$ action on quiver varieties associated to the Dynkin diagram of $\g$. This lifts Nakajima's analogous action on the cohomology of these quiver varieties and can be used to construct braid group actions. 

The proof in \cite{ckl3} builds on the work in this paper (namely Theorem \ref{thm:main}). To prove that we have a categorical $\g$ action we reduce to proving that we have compatible $\sl_2$ actions (one action for each root in $\g$). This in turn follows from the computations in this paper (either using a formal argument or by very similar calculations). 

\subsection{Organization}
The paper is organized as follows. In section \ref{sec:setup} we describe our varieties and the functors which define the $\sl_2$ action. In section \ref{sec:mainresult} we define the notion of geometric categorical $ \sl_2 $ action and explain how we achieve such an action in our case.  In section \ref{sec:equiv}, we explain the equivalence we construct and the map it induces on the level of Grothendieck groups.  In section \ref{sec:proofs}, we prove that we have a geometric categorical $ \sl_2 $ action using some  technical results and calculations from section \ref{sec:prems}. For a quick overview of the paper one should read sections \ref{sec:setup}, \ref{sec:mainresult} and \ref{sec:equiv}. 
 
\subsection{Acknowledgements}
We would like to thank Arkady Berenstein, Alexander Braverman, Dennis Gaitsgory, Valerio Toledano Laredo, and Sebastian Zwicknagl for helpful comments. S.C. was supported by National Science Foundation Grant 0801939 and thanks the Mathematical Sciences Research Institute for its support and hospitality. J.K. was supported by a fellowship from the American Institute of Mathematics. A.L. would also like to thank the Max Planck Institute in Bonn for hospitality and support.

\section{Geometric setup}\label{sec:setup}

In this section we begin by reviewing the notation of Fourier-Mukai kernels and Grassmanians.  We then
introduce the varieties $Y(k,l)$, the deformations $\tY(k,l)$ and the $\sl_2$ functors $\E$ and $\F$.

\subsection{Notation}

For a smooth variety $X$, let $D(X)$ denotes the bounded derived category of $\C^\times$ equivariant coherent sheaves. An object in $\sP \in D(X)$ is represented by a complex of quasi-coherent sheaves. The cohomology of this complex, which we denote $\H^*(\sP)$, are coherent sheaves on $X$. 

If $Y$ is another smooth variety with $\C^\times$ action then an object $\sP \in D(X \times Y)$ whose support is proper over $Y$ induces a Fourier-Mukai (FM) functor $\Phi_{\sP}: D(X) \rightarrow D(Y)$ via $(\cdot) \mapsto \pi_{2*}(\pi_1^* (\cdot) \otimes \sP)$ (where every operation is derived). One says that $\sP$ is the FM kernel which induces $\Phi_{\sP}$. The right and left adjoints $\Phi_{\sP}^R$ and $\Phi_{\sP}^L$ are induced by $\sP_R := \sP^\vee \otimes \pi_2^* \omega_X [\dim(X)]$ and $\sP_L := \sP^\vee \otimes \pi_1^* \omega_Y [\dim(Y)]$ respectively.  If $\sQ \in D(Y \times Z)$ then $\Phi_{\sQ} \circ \Phi_{\sP} \cong \Phi_{\sQ * \sP}: D(X) \rightarrow D(Y)$ where $\sQ * \sP = \pi_{13*}(\pi_{12}^* \sP \otimes \pi_{23}^* \sQ)$ is the convolution product (see also \cite{ck1} section 3.1). 

Let $\G(k,n)$ denote the Grassmannian of $k$-planes in $\C^n$.  We will denote by $H^\star(\G(k,n))$ its  bigraded symmetric cohomology. This means that we first shift the usual grading on the cohomology of $\G(k,n)$ so that it is symmetric with respect to degree zero and then add a second grading $\{\cdot\}$ so that the resulting bigraded vector space is supported on the anti-diagonal; thus every homogeneous subspace of $H^\star(\G(k,n))$ has been given a shift of the form $[s]\{-s\}$ for some $s \in \Z$. For example, we have
$$H^\star(\p^n) \cong \C[n]\{-n\} \oplus \C[n-2]\{-n+2\} \oplus \dots \oplus \C[-n+2]\{n-2\} \oplus \C[-n]\{n\}$$
as a bigraded vector space.

For any variety $ X $, we will use $ \Delta $ to denote the diagonal copy of $ X $ in $ X \times X $.

\subsection{The varieties} \label{se:varieties}

Fix positive integers $ m,N$.  Let $ k, l \le m $ with $ k+l = N$.  We define varieties $ Y(k,l) $ by
\begin{equation*}
Y(k,l) := \{ \C[[z]]^m = L_0 \subset L_1 \subset L_2 \subset \vect : z L_i \subset L_{i-1}, \dim(L_2/L_1) = l, \dim(L_1/L_0) = k \}
\end{equation*}
where the $L_i $ are $ \C$ vector subspaces.

We will say that $ Y(k,l) $ is a ``weight space'' with weight $ \l = l-k $. Note that forgetting $L_2$ describes 
$Y(k,l)$ as a $\G(l,m)$ bundle over $\G(k,m)$. 

There are two natural vector bundles on $Y(k,l)$ whose fibres over a point $\{L_0 \subset L_1 \subset L_2\}$ are $L_1$ and $L_2$, respectively. Abusing notation we will denote these bundles by $L_1$ and $L_2$. 

To explain the relation of $Y(k,l)$ to the affine Grassmannian, let $ \Gr $ denotes the variety 
\begin{equation*}
\Gr := \{\C[[z]] = L_0  \subset L \subset \vect : z L \subset L\}
\end{equation*}
$\Gr$ is a piece of the affine Grassmannian for $ GL_m$ (to get the full thing we would not require $ L_0 \subset L $).   For a dominant integral positive weight $\mu = (\mu_1 \ge \dots \ge \mu_m \ge 0) $ of $GL_m$, let $\Gr_\mu \subset \Gr$ denote the locus where $z$ has Jordan type 
$\mu$ on the quotient $L/L_0$. 

Then there is a natural map $ \pi : Y(k,l) \rightarrow \Gr $, taking $ L_0 \subset L_1 \subset L_2 $ to $ L_0 \subset L_2 $. The image of the map $ Y(k,l) \rightarrow \Gr $ is $ \overline{\Gr_{\omega_k + \omega_l}} $ where $\omega_1, \dots, \omega_{m-1}$ are the fundamental weights of $GL_m$. The map $ Y(k,l) \rightarrow \overline{\Gr_{\omega_k + \omega_l}}$ is exactly semi-small.  Similarly, there exists a map $Y(l,k) \rightarrow \overline{\Gr_{\omega_l + \omega_k}}$, so that $Y(k,l)$ and $Y(l,k)$ are birational and related by contractions to their common image $\overline{\Gr_{\omega_l + \omega_k}}$.  

In fact $ Y(k,l) $ is the convolution product of $ \Gr_{\omega_k} $ with $ \Gr_{\omega_l} $ (see Proposition 2.1 of \cite{ck2}).

We define correspodences $W^r(k,l) \subset Y(k,l) \times Y(k+r, l-r) $ by
\begin{equation*}
W^r(k,l) := \{ (L_\bullet, L'_\bullet) : L_1 \subset L'_1, L_2 = L'_2 \}.
\end{equation*}
Hence a point in $ W^r(k,l) $ is a flag $ L_0 \subset L_1 \subset L'_1 \subset L_2 = L'_2 $ with jumps of size $k,r,l-r$ such that $zL_2 \subset L_1$ and $z L_1' \subset L_0$. 
To keep track of the various jump sizes we add the integers $k,r,l-r$ as superscripts above the corresponding inclusions.  In this notation, the correspondence $W^r(k,l)$ would be described as follows
$$W^r(k,l) = \{L_0 \xrightarrow{k} L_1 \xrightarrow{r} L_1' \xrightarrow{l-r} L_2: zL_1' \subset L_0 \text{ and } zL_2 \subset L_1 \}.$$ 
There are natural vector bundles $L_1, L_1'$ and $L_2$ on $W^r(k,l)$. 

\subsection{The $\C^\times$ action}

There is an action of $\C^\times$ on $\C((z))$ given by $t \cdot z^k = t^{2k}z^k$. This induces an action of $\C^\times$ on $\vect$. Notice that for any $v \in \vect$ we have 
$$t \cdot (zv) = t^2 z(t \cdot v).$$
Thus, as vector spaces, $t \cdot (zL_i) = z(t \cdot L_i)$, so if $zL_i \subset L_{i-1}$ then $t \cdot zL_i \subset t \cdot L_{i-1}$; thus $z(t \cdot L_i) \subset t \cdot L_{i-1}$. Consequently, the $\C^\times$ action on $\vect$ induces a $\C^\times$ action on $Y(k,l)$:
$$t \cdot (L_0,L_1,L_2) = (t \cdot L_0, t \cdot L_1, t \cdot L_2).$$  

We will always work $\C^\times$-equivariantly with respect to this action. So, for example, in this paper 
$D(Y(k,l))$ will denote the bounded derived category of $\C^\times$ equivariant coherent sheaves (as opposed to the usual $D^b_{\C^\times}(Y(k,l))$). The subvariety 
$W^r(k,l) \subset Y(k,l) \times Y(k+r,l-r)$ and the vector bundles $L_i$ defined earlier are also 
$\C^\times$-equivariant. 

If $Y$ carries a $\C^\times$ action we denote by $\O_Y\{k\}$ the structure sheaf of $Y$ with non-trivial $\C^\times$ action of weight $k$. More precisely, if $f \in \O_Y(U)$ is a local function then, viewed as a section $f' \in \O_Y\{k\}(U)$, we have $t \cdot f' = t^{-k}(t \cdot f)$. If $\mathcal{M}$ is a $\C^\times$-equivariant sheaf then we define $\mathcal{M}\{k\} := \mathcal{M} \otimes \O_Y \{k\}$. Using this notation we have the $\C^\times$-equivariant map $z: L_{i+1} \rightarrow L_i \{2\}$. 

\subsection{The deformations}

Each variety $Y(k,l)$ has a natural 2-parameter deformation over $\bA_\C^2 = \langle x,y \rangle$ given by
\begin{eqnarray*}
\{ \C[[z]]^m = L_0 \subset L_1 \subset L_2 \subset \vect; (x,y) \in \C^2 : \\
(z-x) L_1 \subset L_0, (z-y) L_2 \subset L_1, \dim(L_2/L_1) = l, \dim(L_1/L_0) = k \}.
\end{eqnarray*}
Notice that over $(x,y)=(0,0)$ we recover $Y(k,l)$. This deformation restricted to the diagonal $x=y$ is actually trivial but if we take any other ray in $\bA_\C^2$ through the origin we get a non-trivial deformation of $Y(k,l)$. Which ray we choose will not be important -- we choose the axis $y=0$ to obtain the deformation
\begin{eqnarray*}
\tY(k,l) := \{\C[[z]]^m = L_0 \subset L_1 \subset L_2 \subset \vect; x \in \C : \\
(z-x) L_1 \subset L_0, z L_2 \subset L_1, \dim(L_2/L_1) = l, \dim(L_1/L_0) = k \}.
\end{eqnarray*}
The $\C^\times$ action on $Y(k,l)$ extends to a $\C^\times$ action on all of $\tY(k,l)$.  This action maps fibers to fibers and acts on the base $\bA^1_\C$ via $x \mapsto t^2 x$. 

\subsection{The functors} \label{se:functors}

For $r \geq 0$ we define functors $ \E^{(r)}(k,l) : D(Y(k+r, l-r)) \rightarrow D(Y(k,l)) $ as the Fourier-Mukai (FM) transform with respect to the sheaf
\begin{equation*}
\sE^{(r)}(k,l) := \O_{W^r(k,l)} \otimes \det(L_2/L'_1)^{-r} \det(L_1/L_0)^r \{rk\}
\end{equation*}

Similarly, 
define functors $ \F^{(r)}(k,l) : D(Y(k,l)) \rightarrow D(Y(k+r, l-r)) $ as the FM transform with respect to the sheaf
\begin{equation*}
\sF^{(r)}(k,l) := \O_{W^r(k,l)} \otimes \det(L'_1/L_1)^{l-k-r} \{r(l-r)\}.
\end{equation*}

These $\E$s and $\F$s are naturally left and right adjoints of each other up to shifts (see section \ref{sec:adjunctions}). 

\section{Construction of the geometric categorical $ \sl_2 $ action}\label{sec:mainresult}

In this section we give the definition of a geometric categorical $\sl_2$ action.  Then we explain how the varieties and functors introduced in section \ref{sec:setup} induce a geometric categorical $\sl_2$ action.  This allows us to use the main results from \cite{ckl1,ckl2} to give us equivalences $ D(Y(k,l)) \rightarrow D(Y(l,k)) $ (section \ref{sec:equiv}).

\subsection{Geometric categorical $\sl_2$ action} \label{se:geomsl2def}
A geometric categorical $\sl_2$ action consists of the following data.

\begin{enumerate}
\item A sequence of smooth complex varieties $Y(-N), Y(-N+1), \dots, Y(N-1), Y(N)$ over $\k$ (equipped with $\C^\times$-actions) 
\item Fourier-Mukai kernels 
$$\sE^{(r)}(\l) \in  D(Y(\l-r) \times Y(\l+r)) \text{ and } \sF^{(r)}(\l) \in  D(Y(\l+r) \times Y(\l-r))$$
(which are $\k^\times$ equivariant).
We will usually write $\sE(\l)$ for $\sE^{(1)}(\l)$ and $\sF(\l)$ for $\sF^{(1)}(\l)$ while one should think of $\sE^{(0)}(\l)$ and $\sF^{(0)}(\l)$ as $\O_\Delta$.
\item For each $Y(\l)$ a flat deformation $\tY(\l) \rightarrow \bA^1_\k$ carrying a $\k^\times$-action covering the action $x \mapsto t^2 x$ (where $t \in \k^\times$) on the base. 
\end{enumerate}

On this data we impose the following additional conditions.

\begin{enumerate}
\item Each (graded piece of the) $\Hom$ space between two objects in $D(Y(\l))$ is finite dimensional. In particular, this means that $\End(\O_{Y(\l)}) = \k \cdot I$.
\item 
$\sE^{(r)}(\l)$ and $\sF^{(r)}(\l)$ are left and right adjoints of each other up to shift. More precisely
\begin{enumerate}
\item $\sE^{(r)}(\l)_R = \sF^{(r)}(\l)[r\l]\{-r\l\}$ or equivalently $\sF^{(r)}(\l)_L = \sE^{(r)}(\l)[r\l]\{-r\l\}$
\item $\sE^{(r)}(\l)_L = \sF^{(r)}(\l)[-r\l]\{r\l\}$ or equivalently $\sF^{(r)}(\l)_R = \sE^{(r)}(\l)[-r\l]\{r\l\}$
\end{enumerate}

\item 
At the level of cohomology of complexes we have
$$\H^*(\sE(\l+r) * \sE^{(r)}(\l-1)) \cong \sE^{(r+1)}(\l) \otimes_{\k} H^\star(\p^{r}).$$

\item 
If $ \l \le 0 $ then 
\begin{equation*}
\sF(\l+1) * \sE(\l+1) \cong \sE(\l-1) * \sF(\l-1)  \oplus \sP
\end{equation*}
where $\H^*(\sP) \cong \O_\Delta \otimes_\k H^\star(\p^{-\l-1})$. 

Similarly, if $\l \ge 0$ then 
\begin{equation*}
\sE(\l-1) * \sF(\l-1) \cong \sF(\l+1) * \sE(\l+1) \oplus \sP'
\end{equation*}
where $\H^*(\sP') \cong \O_\Delta \otimes_\k H^\star(\p^{\l-1})$.

\item  We have
$$\H^*(i_{23*} \sE(\l+1) * i_{12*} \sE(\l-1)) \cong \sE^{(2)}(\l)[-1]\{1\} \oplus \sE^{(2)}(\l)[2]\{-3\}$$
where the $i_{12}$ and $i_{23}$ are the closed immersions
\begin{align*}
i_{12}: Y(\l-2) \times Y(\l) \rightarrow Y(\l-2) \times \tY(\l) \\i_{23}: Y(\l) \times Y(\l+2) \rightarrow \tY(\l) \times Y(\l+2).
\end{align*}

\item If $\l \le 0$ and $k \ge 1$ then the image of $\supp(\sE^{(r)}(\l-r))$ under the projection to $Y(\l)$ is not contained in the image of $\supp(\sE^{(r+k)}(\l-r-k))$ also under the projection to $Y(\l)$. Similarly, if $\l \ge 0$ and $k \ge 1$ then the image of $\supp(\sE^{(r)}(\l+r))$ in $Y(\l)$ is not contained in the image of $\supp(\sE^{(r+k)}(\l+r+k))$.

\item All $\sE^{(r)}$s and $\sF^{(r)}$s are sheaves (i.e. complexes supported in degree zero).
\end{enumerate}

\begin{Remark} It is probably more appropriate to call the above structure a $U_q(\sl_2)$ action, since the $\C^\times$ action provides a second grading which allows for a categorification of the quantum group $U_q(\sl_2)$ (rather than just the Lie algebra $\sl_2$). One may very well choose to ignore the $\C^\times$ action and equivariance requirements in the above, in which case one should also ignore all the $\{\cdot\}$ shifts in the definition of a geometric categorical $\sl_2$ action. For many purposes, such as constructing the equivalence described in \cite{ckl2}, having a geometric categorical $\sl_2$ action without the $\C^\times$ action is sufficient. 
\end{Remark}

\begin{Remark} Having the conditions at the level of cohomology may seem strange but, as we will see, it is often easier to work with the cohomology of a complex than with the complex itself.  A great example of this phenomenon is Lemma \ref{lem:fibreproduct} which we use repeatedly. The statement there holds at the level of cohomology and is simply not true at the level of complexes. To overcome this problem we use the deformations (with the properties described above) to lift isomorphisms at the level of cohomology to isomorphisms at the level of complexes.
\end{Remark}

\subsection{The Main Theorem}

Fix $m, N$ as in section \ref{sec:setup}. We work over the ground field $\k = \C$. For $\l = -N, -N+2, \dots, N-2,N$ define $Y(\l) := Y(k,l)$ and $\tY(\l) := \tY(k,l) \rightarrow \bA^1_\C$ where $\l = l-k$ is the weight and $k+l = N$. The rest of the $Y(\l)$s are taken to be empty. 

For the FM kernels we take 
$$\sE^{(r)}(\l) := \sE^{(r)} \left( \frac{N-\l-r}{2}, \frac{N+\l+r}{2} \right): Y(\l-r) \rightarrow Y(\l+r)$$ 
and 
$$\sF^{(r)}(\l) := \sF^{(r)} \left( \frac{N-\l-r}{2}, \frac{N+\l+r}{2} \right): Y(\l+r) \rightarrow Y(\l-r).$$

\begin{Theorem}\label{thm:main}
Under the identifications above, the spaces $Y(\l)$ and $\tY(\l) \rightarrow \bA^1_\C$ together with kernels $\sE^{(r)}(\l)$ and $\sF^{(r)}(\l)$ define a geometric categorical $\C^\times$-equivariant $\sl_2$ action.
\end{Theorem}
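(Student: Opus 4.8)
### Proof proposal

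The plan is to verify conditions (1)--(7) in the definition of a geometric categorical $\sl_2$ action one at a time for the concrete geometry of the $Y(k,l)$ and the correspondences $W^r(k,l)$. Conditions (1) and (7) are essentially immediate: finiteness of $\Hom$-spaces follows because each $Y(k,l)$ is a smooth projective variety (a $\G(l,m)$-bundle over $\G(k,m)$), so its derived category has finite-dimensional graded $\Hom$-spaces; and all the kernels $\sE^{(r)}(k,l)$, $\sF^{(r)}(k,l)$ are by definition line bundles twisted by structure sheaves of the (reduced, irreducible) correspondences $W^r(k,l)$, hence sheaves in degree zero. Condition (6), the non-containment of supports, is a dimension/incidence computation: the image of $\supp \sE^{(r)}(\l \mp r)$ in $Y(\l)$ is the locus of flags $L_0 \subset L_1 \subset L_2$ for which there exists a sublattice of $L_1$ of the appropriate colength that still satisfies the $z$-stability condition, and one checks directly that shrinking this sublattice further (increasing $r$ by $k$) imposes strictly more conditions, so the loci are distinct.

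The substantive conditions are (3), (4), and (5), which are the categorified $\sl_2$ relations. For these I would first establish the key geometric fact — computing the relevant fibre products of the correspondences $W^r$ — at the level of cohomology, using a lemma of the type referenced as Lemma \ref{lem:fibreproduct} (iterated correspondences of lattice flags, intersected transversally away from a locus where the excess is controlled by a projective space of jumps). Concretely: for condition (3) one computes $W^1 \times_{Y(\l)} W^r$ inside $Y(\l-r-1) \times Y(\l+r+1)$ and identifies it, up to the failure of transversality measured by $\p^r$, with $W^{r+1}$; the line-bundle twists in the definitions of $\sE$ and $\sE^{(r)}$ are chosen precisely so that the determinant bundles match and the $H^\star(\p^r)$ factor appears with the correct bigrading. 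For condition (4), the two composites $\sF(\l+1) \ast \sE(\l+1)$ and $\sE(\l-1) \ast \sF(\l-1)$ are supported on fibre products of $W^1$'s over $Y(\l \pm 1)$; one stratifies by whether the two intermediate lattices $L_1$ coincide, the open stratum giving the common term and the diagonal stratum contributing $\O_\Delta \otimes H^\star(\p^{|\l|-1})$. Condition (5) is the analogous computation for the deformed spaces $\tY(\l)$, where pushing forward along $i_{12}, i_{23}$ and composing produces $\sE^{(2)}(\l)$ with the two-term shift $[-1]\{1\} \oplus [2]\{-3\}$ coming from the normal bundle geometry of the diagonal deformation.

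The main obstacle, as the authors themselves flag in their second Remark, is the passage from these cohomology-level isomorphisms to genuine isomorphisms of complexes (FM kernels). The cohomology computations only pin down the associated graded of a complex, and a priori there could be a nontrivial extension. The mechanism to rule this out is the deformation data $\tY(\l) \to \bA^1_\C$: one shows the relevant composite kernels deform flatly over the base, that the generic fibre (where $z - x$ and $z$ act with distinct "eigenvalues") the correspondences become transverse and the isomorphism is manifest, and then specializes. The $\C^\times$-action scaling the base by $x \mapsto t^2 x$ forces the deformation to be equivariantly split, which together with the finiteness in condition (1) and a weight/degree count kills any would-be extension class. So the real work is: (a) the fibre-product lemmas in section \ref{sec:prems}, (b) checking the determinant-twist bookkeeping so all bigraded shifts land exactly as stated, and (c) the deformation argument upgrading cohomology isomorphisms to kernel isomorphisms — with (c) being the crux, since (a) and (b) are, while lengthy, essentially mechanical.
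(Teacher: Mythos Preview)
Your outline has the right shape for conditions (i), (iii), (v), (vi), (vii), and your description of the fibre-product computations matches the paper's Propositions \ref{prop:Ecomps1}, \ref{prop:Ecomps2}, \ref{prop:comm1}. But there are two genuine gaps.

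First, you have entirely omitted condition (ii), the adjunction $\sE^{(r)}(\l)_R \cong \sF^{(r)}(\l)[r\l]\{-r\l\}$ (and its variants). This is not automatic: it requires computing the canonical bundles $\omega_{Y(k,l)}$ and $\omega_{W^r(k,l)}$ explicitly (Lemma \ref{lem:cancalc} in the paper) and checking that the Grothendieck--Serre duals of the kernels $\sE^{(r)}$ and $\sF^{(r)}$ match up with the correct shifts. This is Proposition \ref{prop:Eadj}, and without it one does not have a geometric categorical $\sl_2$ action.

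Second, and more seriously, you have misidentified the ``crux''. The definition of geometric categorical $\sl_2$ action only asks for conditions (iii) and (v) \emph{at the level of cohomology of complexes}; no upgrading to genuine isomorphisms of kernels is required for Theorem \ref{thm:main}. The deformation-based passage from cohomology isomorphisms to honest isomorphisms that you describe as part (c) is the content of \cite{ckl1}, and it is what proves Corollary \ref{cor:main}, not Theorem \ref{thm:main}. The one place in Theorem \ref{thm:main} where a genuine kernel-level statement is needed is condition (iv): the direct-sum splitting $\sE * \sF \cong \sF * \sE \oplus \sP$ must hold as complexes, not merely in cohomology. The paper obtains this splitting not via any deformation argument but by a $\Hom$-vanishing: one shows $\Hom(\sF(k-1,l+1)*\sE(k-1,l+1)[-1], \sP') = 0$ using the adjunction from condition (ii) together with the fact that the $\sE$'s are sheaves, which forces the relevant connecting map in the distinguished triangle to vanish (Step 4 of Proposition \ref{prop:comm1}). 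Your stratification sketch for (iv) would at best produce an exact triangle, and you have not supplied the mechanism that splits it.
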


\begin{Corollary}\label{cor:main}
Under the identification above, the kernels $\sE^{(r)}(\l)$ and $\sF^{(r)}(\l)$ satisfy the categorified $\sl_2$ relations. In other words, we have
$$\sF^{(r_2)}(k,l) * \sF^{(r_1)}(k-r_1,l+r_1) \cong \sF^{(r_1+r_2)}(k-r_1,l+r_1) \otimes_\C H^\star(\G(r_1,r_1+r_2))$$
and similarly if we replace $\sF$s by $\sE$s. Moreover, if $k \ge l$ then
\begin{equation*}
\sF(k-1,l+1) * \sE(k-1,l+1) \cong \sE(k,l) * \sF(k,l) \oplus \O_\Delta \otimes_\C H^\star(\p^{k-l-1})
\end{equation*}
while if $ k \le l $ then
\begin{equation*}
\sE(k,l) * \sF(k,l) \cong \sF(k-1,l+1) * \sE(k-1,l+1) \oplus \O_\Delta \otimes_\C H^\star(\p^{l-k-1}).
\end{equation*}
\end{Corollary}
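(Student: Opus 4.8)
The plan is to obtain the Corollary as a formal consequence of Theorem~\ref{thm:main} together with the main result of \cite{ckl1}, which promotes any geometric categorical $\sl_2$ action to a strong categorical $\sl_2$ action in the sense of \cite{CR}. No new geometric input about the varieties $Y(k,l)$ is required beyond what Theorem~\ref{thm:main} already supplies; the work is entirely in translating the abstract relations, which are stated in terms of the $\sl_2$-weight $\l = l-k$, back into the concrete $(k,l)$-notation of section~\ref{se:varieties}, and in matching shift conventions.

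First, for the divided-power relations: a strong categorical $\sl_2$ action automatically comes equipped with divided-power functors $\sE^{(r)}$, $\sF^{(r)}$, built from the nilHecke algebra acting on the iterated convolutions of $\sE$ and $\sF$, and these satisfy exactly $\sE^{(r_2)} * \sE^{(r_1)} \cong \sE^{(r_1+r_2)} \otimes_\C H^\star(\G(r_1,r_1+r_2))$ and its $\sF$-analogue as part of the standard package, see \cite{CR}. The only thing to verify is that these abstractly-produced divided powers agree, up to the expected $[\cdot]\{\cdot\}$ shifts, with the explicit FM kernels $\sE^{(r)}(k,l)$, $\sF^{(r)}(k,l)$ of section~\ref{se:functors}; this is precisely the content of condition~(3) in the definition of a geometric categorical $\sl_2$ action, upgraded from cohomology to honest complexes by \cite{ckl1}.

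Next, for the commutator relation, the hypothesis $k \ge l$ is the condition $\l \le 0$, and condition~(4) of the definition already yields $\sF(\l+1)*\sE(\l+1) \cong \sE(\l-1)*\sF(\l-1) \oplus \sP$ with $\H^*(\sP) \cong \O_\Delta \otimes_\C H^\star(\p^{-\l-1})$. The step where \cite{ckl1} does real work is upgrading this cohomological identification to a genuine isomorphism $\sP \cong \O_\Delta \otimes_\C H^\star(\p^{-\l-1})$; this is possible because of condition~(1)---$\End(\O_\Delta) = \k \cdot I$ and all $\Hom$-spaces are finite-dimensional---which forces a complex whose cohomology sheaves are shifts of $\O_\Delta$ lying in distinct internal degrees to split. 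Substituting $-\l-1 = k-l-1$ gives the displayed formula, and the case $k \le l$ is symmetric, using the other half of condition~(4).

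The proof therefore carries no intrinsic difficulty of its own; the substance is in Theorem~\ref{thm:main} and in the general theory of \cite{ckl1}. The only step I would budget real care for is the bookkeeping of indexing and grading conventions between the concrete kernels of section~\ref{se:functors} and the abstract divided powers manufactured inside \cite{ckl1}, since a misplaced $[\cdot]\{\cdot\}$ shift would later corrupt the Grothendieck-group identification.
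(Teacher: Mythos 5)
Your overall strategy is exactly right and matches the paper: invoke Theorem~\ref{thm:main} to establish a geometric categorical $\sl_2$ action and then cite the main result of \cite{ckl1}. The gap is in your explanation of \emph{how} \cite{ckl1} upgrades the cohomological identifications to honest isomorphisms. You assert that condition~(1) (finite-dimensional graded $\Hom$-spaces and $\End(\O_\Delta) = \k$) ``forces a complex whose cohomology sheaves are shifts of $\O_\Delta$ lying in distinct internal degrees to split.'' This is not true. The obstruction to merging $\H^j(\sP) \cong \O_\Delta\{j\}$ with $\H^{j-2}(\sP) \cong \O_\Delta\{j-2\}$ lives in $\Ext^3(\O_\Delta\{j\}, \O_\Delta\{j-2\}) = \Ext^3_{\{-2\}}(\O_\Delta, \O_\Delta)$, and there is no reason for this $\Ext$-group to vanish; the diagonal has self-$\Ext$s controlled by Hochschild cohomology, which is generally nonzero and not confined to a single internal degree. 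Finite-dimensionality of $\Hom$-spaces in $D(Y(\l))$ (which is what condition~(1) controls) does not kill it.

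The actual mechanism, which the paper's proof of Corollary~\ref{cor:main} states explicitly, is the use of the flat deformations $\tY(\l) \rightarrow \bA^1_\k$ via Proposition~\ref{prop:Ecomps2} (realizing condition~(5)). It is the comparison between the convolution over $Y(\l)$ and the convolution over $\tY(\l)$ that rules out the potentially nonzero differentials and forces formality; this is one of the central ideas of the geometric categorical $\sl_2$ framework and is precisely why the deformations appear in the definition at all. A similar comment applies to your account of the divided-power relation: the nilHecke machinery you invoke is downstream of the same deformation argument, not an independent route around it. Your proof would remain valid as a citation, but the budget of ``real care'' you allocate to grading bookkeeping would be better spent on understanding the role of $\tY(\l)$; a reader relying on your stated mechanism would be led to believe the deformations are dispensable, which they are not.
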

\begin{proof}
This follows immediately from Theorem \ref{thm:main} together with the main result of \cite{ckl1}.
The idea is to use Proposition \ref{prop:Ecomps2} in the deformed setup to show that the results in Propositions \ref{prop:Ecomps1} and \ref{prop:comm1} are true on the nose (and not just at the level of cohomology). This can be done whenever one has a geometric categorical $\sl_2$ action -- see \cite{ckl1} for a complete proof.
\end{proof}

\begin{Corollary}\label{cor:kaction}
The functors $ \E^{(r)}, \F^{(r)} $ induce an action of $U_q(\sl_2) $ on $ \oplus_{k+l = N} K(D(Y(k,l)) $.
\end{Corollary}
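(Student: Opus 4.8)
The strategy is to apply the Grothendieck group functor $K(\,\cdot\,)$ to the isomorphisms of Corollary~\ref{cor:main} and to read off a presentation of $U_q(\sl_2)$. Working $\C^\times$-equivariantly, each $K(D(Y(k,l)))$ is a $\base$-module; a Fourier--Mukai kernel $\sP$ induces a $\base$-linear map $[\Phi_\sP]$ between Grothendieck groups, a convolution $\sQ * \sP$ is sent to the composition $[\Phi_\sQ]\circ[\Phi_\sP]$, a direct sum of kernels is sent to the sum of the corresponding maps, and a summand carrying a shift $[s]\{-s\}$ contributes $q^s$ times its class (with the anti-diagonal convention of Section~\ref{sec:setup} this is the class of $(\,[1]\{-1\}\,)^s$, so the signs are absorbed into $q$). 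Write $V := \bigoplus_{k+l=N} K(D(Y(k,l)))$, graded by weight via $V_\mu := K(D(Y(k,l)))$ with $\mu = l-k$; the empty varieties contribute nothing, so $V_\mu = 0$ unless $\mu \in \{-N, -N+2, \dots, N\}$.

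Let $e$ and $f$ be the $\base$-linear endomorphisms of $V$ induced by the functors $\E$ and $\F$ of Section~\ref{se:functors}, and let $e^{(r)}, f^{(r)}$ be those induced by the divided-power kernels. By the way the weight-space varieties $Y(k,l)$ and the correspondences $W^r(k,l)$ are indexed, the operator $e$ maps $V_\mu$ into $V_{\mu+2}$ and $f$ maps $V_\mu$ into $V_{\mu-2}$. Consequently, letting the Cartan generator $K$ of $U_q(\sl_2)$ act on $V_\mu$ by the unit $q^\mu \in \base$, the relations $KK^{-1}=1$, $KEK^{-1} = q^2 E$ and $KFK^{-1} = q^{-2}F$ hold automatically for $e, f, K^{\pm 1}$, and it remains only to produce the commutator relation.

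For this one records the classes, in $\base$, of the bigraded cohomologies occurring in Corollary~\ref{cor:main}: with $[m]_q := (q^m - q^{-m})/(q - q^{-1})$ one has $[H^\star(\p^n)] = [n+1]_q$ and $[H^\star(\G(a,a+b))] = \left[\begin{smallmatrix} a+b \\ a \end{smallmatrix}\right]_q$, the Gaussian binomial. Applying $K(\,\cdot\,)$ to the first family of isomorphisms in Corollary~\ref{cor:main} gives $f^{(r_2)} f^{(r_1)} = \left[\begin{smallmatrix} r_1+r_2 \\ r_1 \end{smallmatrix}\right]_q f^{(r_1+r_2)}$ on each weight space, and likewise with $e$ in place of $f$; in particular $f^r = [r]_q!\, f^{(r)}$ and $e^r = [r]_q!\, e^{(r)}$, so the $f^{(r)}, e^{(r)}$ are honest divided powers (so that $V$ is moreover a module over the $\base$-form of $U_q(\sl_2)$). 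Applying $K(\,\cdot\,)$ to the commutator isomorphisms gives, on $V_\mu$ with $\mu \le 0$, the identity $fe = ef + [-\mu]_q\cdot\id$, and on $V_\mu$ with $\mu \ge 0$ the identity $ef = fe + [\mu]_q\cdot\id$; since $[-m]_q = -[m]_q$, both read $ef - fe = [\mu]_q\cdot\id$ on $V_\mu$, which is exactly the relation $EF - FE = (K - K^{-1})/(q - q^{-1})$ with $K|_{V_\mu} = q^\mu$. As there are no Serre relations to check for $\sl_2$, the operators $e, f, K^{\pm 1}$ endow $V$ with an action of $U_q(\sl_2)$; since there are only finitely many weights, $e$ and $f$ are in addition locally nilpotent, so the module is integrable.

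All the geometric content is already contained in Corollary~\ref{cor:main} (hence in Theorem~\ref{thm:main}), and the only real work that remains is bookkeeping with the gradings. The point that requires care is the normalization: one must check that the symmetric bigrading convention of Section~\ref{sec:setup}, together with the shifts attached to $\sE^{(r)}, \sF^{(r)}$ and to the adjunctions, is arranged so that $[H^\star(\p^n)]$ and $[H^\star(\G(a,a+b))]$ appear as the positive quantum integer $[n+1]_q$ and the Gaussian binomial (rather than as signed variants), and that the induced maps $e, f$ shift the weight grading as claimed; granting this, the relations above are immediate.
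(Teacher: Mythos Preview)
Your proof is correct and is precisely the argument the paper leaves implicit: Corollary~\ref{cor:kaction} is stated in the paper without proof, as an immediate consequence of Corollary~\ref{cor:main}, and you have simply spelled out the passage to Grothendieck groups. You also correctly flag the one point requiring care, namely that the bigrading conventions are arranged so that $[H^\star(\p^n)]$ decategorifies to $[n+1]_q$ rather than a signed variant.
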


\section{The equivalence} \label{sec:equiv}
In this section we describe the equivalence of categories
$$
    \T(l,k): D(Y(k,l))\longrightarrow D(Y(l,k))
$$
which arises from the geometric categorical $\sl_2 $ action.  We will show that on the Grothendieck group this equivalence is closely related to the R-matrix isomorphism.

\subsection{Construction of the equivalence}\label{sec:equivalence}
Fix $k \le l$. We form a complex $ \Theta_*(k,l)$ of functors whose terms are
$$ \Theta_s(k,l) = \F^{(l-k+s)}(s)\E^{(s)}(l-k+s)[-s]\{s\} $$
where $ s = 0, \dots, k$. The differential $ \Theta_s \rightarrow \Theta_{s-1} $ is given by the composition of maps
\begin{equation*}
\F^{(l-k+s)}\E^{(s)} \xrightarrow{\iota \iota} \F^{(l-k+s-1)} \F\E \E^{(s-1)} \xrightarrow{\varepsilon} \E^{(l-k+s-1)}\F^{(s-1)}
\end{equation*}
where the morphisms $\iota$, $\varepsilon$ are defined in \cite{ckl1}.

\begin{Theorem}\label{thm:equivalence}
The complex $\Theta_*(k,l)$ has a unique cone $ \T(k,l) $.  This cone gives an equivalence of categories
$$
    \T(k,l): D(Y(k,l))\longrightarrow D(Y(l,k)).
$$ 
The map induced by $ \T(k,l) $ from $ K(D(Y(k,l))) \rightarrow K(D(Y(l,k)))$ coincides with the action of the quantum Weyl group element $ t \in \widehat{U_q(\sl_2)} $.
\end{Theorem}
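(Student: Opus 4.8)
The plan is to invoke the abstract machinery from \cite{ckl1,ckl2} applied to the geometric categorical $\sl_2$ action supplied by Theorem \ref{thm:main}. The key point is that once one has a strong categorical $\sl_2$ action (which \cite{ckl1} produces from the geometric data), the complex $\Theta_*(k,l)$ is precisely the complex of functors whose convolution was shown in \cite{ckl2} to give the Rickard-type equivalence categorifying the Weyl group element. So the first step would be to check that our $\Theta_*(k,l)$ matches, term by term and differential by differential, the complex defined in \cite{ckl2}; this is mostly a matter of bookkeeping with the shifts $[-s]\{s\}$ and the weights, using the dictionary $\l = l-k$, $Y(\l) = Y(k,l)$. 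Once this identification is in place, the existence and uniqueness of the cone $\T(k,l)$, and the fact that it is an equivalence $D(Y(k,l)) \xrightarrow{\sim} D(Y(l,k))$, follow immediately from the main theorem of \cite{ckl2}.

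The second step is the statement about Grothendieck groups. On $K(D(Y(k,l)))$ the classes $[\Theta_s(k,l)]$ are honest elements, and passing to $K$-theory the complex $\Theta_*$ becomes an alternating sum $\sum_s (-1)^s q^{s} [\F^{(l-k+s)}\E^{(s)}]$ (the $\{s\}$ shift contributing the power of $q$, with our conventions). One then computes the action of this element of $\widehat{U_q(\sl_2)}$ on a weight space. The element $t$ of the quantum Weyl group acts on the $\l$-weight space of an integrable module by a known formula (Lusztig's $T$-operator), expressible as $\sum_{s} (-1)^s q^{?} F^{(l-k+s)}E^{(s)}$ on a lowest-weight-type vector, and the point is that the divided-power relations $F^{(a)}E^{(b)} = \sum \binom{\cdots}{\cdots} E^{(b-j)}F^{(a-j)}$ in $U_q(\sl_2)$ make the two expressions agree. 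Since Corollary \ref{cor:kaction} tells us $K$ carries an honest $U_q(\sl_2)$-action and Corollary \ref{cor:main} gives the divided-power and commutator relations on the nose, all the needed identities hold in $K$; so this step reduces to a purely algebraic verification inside $\widehat{U_q(\sl_2)}$ that the image of $[\Theta_*(k,l)]$ equals $t$ on each weight space, together with the observation that both sides intertwine the $U_q(\sl_2)$-action appropriately and hence are determined by their value on one vector.

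I expect the main obstacle to be the careful matching of conventions in the first step: the precise normalization of the morphisms $\iota$, $\varepsilon$ from \cite{ckl1}, the grading shifts $[-s]\{s\}$, and the identification of which of the several natural ``$t$'' elements (Lusztig's $T'_{i,1}$ versus $T''_{i,-1}$, and the rescaled quantum Weyl group element) appears — these differ by invertible scalars and grading shifts, which is exactly the ``up to multiplication by scalars'' caveat mentioned in the introduction. Getting the cone to be \emph{unique} also rests on the $\Hom$-finiteness condition (condition (1) of the geometric categorical $\sl_2$ action) together with a vanishing argument from \cite{ckl2}; I would cite that rather than reprove it. Once the conventions are pinned down, the $K$-theoretic statement is a short computation using the relations already established in Corollary \ref{cor:main}.
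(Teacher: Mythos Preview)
Your proposal is correct and follows essentially the same approach as the paper: invoke Theorem~\ref{thm:main} to get a geometric categorical $\sl_2$ action, apply the main result of \cite{ckl1} to upgrade this to a strong categorical $\sl_2$ action, and then appeal to the main result of \cite{ckl2} for the uniqueness of the cone, the equivalence, and the identification with the quantum Weyl group element on $K$-theory. The paper's proof is in fact even terser than yours---it simply cites \cite{ckl2} for the Grothendieck group statement rather than sketching the algebraic verification you outline in your second step---so your concerns about matching normalizations of $\iota$, $\varepsilon$, and the various $t$'s, while legitimate in principle, are precisely the bookkeeping that is already absorbed into the cited references.
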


\begin{proof}
By the main result of \cite{ckl1}, the geometric categorical $\sl_2$ action of this paper induces a 
strong categorical $\sl_2$ action, as defined in \cite{ckl1}.  
Once we have established a strong categorical $\sl_2$ action,  the main result of \cite{ckl2} implies that the cone of the complex $\Theta_*(l,k)$ is unique and that this cone induces an equivalence of categories 
$D(Y(k,l))\simeq D(Y(l,k))$.  The statement about the Grothendieck group is also proved in \cite{ckl2}.
\end{proof}

It follows from dimension considerations that there exists an isomorphism of $ \C[q,q^{-1}] $ modules $ K(D(Y(k,l))) \cong \Lambda_q^k \otimes \Lambda_q^l $, where $ \Lambda_q^k $ is a minuscule representation of $ U_q(\sl_m) $.  We would like to fix such an isomorphism and show that under this isomorphism $ \T(k,l) $ corresponds to the braiding.  For this we will need to examine quantum skew Howe duality.

\subsection{Quantum skew Howe duality}
The following results concerning quantum skew Howe duality have not appeared in the literature, but are known to experts.  In particular, they are modifications of the corresponding results for quantum symmetric Howe duality (also known as the quantum matrix function algebra) which has been studied extensively in \cite{PW}, \cite{TL}, following the work of Manin \cite{M}. 

In writing this section, we were greatly helped by conversations with Valerio Toledano Laredo and Arkady Berenstein.

\subsubsection{The vector space and action}
We begin with the algebra
\begin{equation*}
\Lambda_q(\C^n) = \base \langle X_1, \dots, X_n \rangle / ( X_i^2, \ X_i X_j + q X_j X_i \text{ for } i < j )
\end{equation*}

This is a free $\base $ module with basis $ \{ X_{i_1} \cdots X_{i_k} \}$ for ${i_1 < \cdots < i_k} $ (see for example \cite{PW}).  

$\Lambda_q(\C^n) $ has an action of $ U_q(\sl_n) $ in the following way.  Consider $  V = \spn\{X_1, \dots, X_n\} $, the natural representation of $ U_q(\sl_n) $.  Then $ \Lambda_q(\C^n) = TV/(R) $, where $ R \subset V \otimes V $ is a $ U_q(\sl_n) $ subrepresentation.  Hence $ \Lambda_q(\C^n)$ carries an action of $ U_q(\sl_n) $.  

When we set $ q = 1$, we recover the representation $ \Lambda(\C^n) $ of $ \sl_n $.

When $ n = 2$, we will write $ X,Y $ instead of $ X_1, X_2 $ as the basis for $ \C^2$.

Now we consider the ``exterior quantum matrix algebra'' which is a vector space $ \Lambda_q(\C^m \otimes \C^2) $ over $ \C[q,q^{-1}] $ with basis $ \{Y_{i_1}\cdots Y_{i_k} X_{j_1} \cdots X_{j_l} \}$ for $ i_1 < \cdots < i_k$ and $ j_1 < \cdots< j_l$.  It can also be defined as a quotient of a tensor algebra, but we will not need the algebra structure here.  In fact it is the quadratic dual of the more familiar quantum matrix algebra (see \cite{M}, especially section 8.9).

We have isomorphisms of $ \base $ modules
\begin{equation} \label{eq:isoms}
\Lambda_q(\C^m) \otimes \Lambda_q(\C^m) \leftarrow \Lambda_q(\C^m \otimes \C^2) \rightarrow \Lambda_q(\C^2)^{\otimes m}
\end{equation}
where on the left side of (\ref{eq:isoms})
\begin{equation*}
 Y_{i_1} \cdots Y_{i_k} X_{j_1} \cdots X_{j_l} \mapsto X_{i_1} \cdots X_{i_k} \otimes X_{j_1} \cdots X_{j_l}
\end{equation*}
and on the right side of (\ref{eq:isoms})
\begin{equation*}
 Y_{i_1} \cdots Y_{i_k} X_{j_1} \cdots X_{j_l} \mapsto (-1)^{ \# \{(a,b) : i_a < j_b \} }YX \otimes X \otimes 1 \otimes Y \cdots
\end{equation*}
 where we write $ YX $ in the $ p$th slot if $ p \in \{i_1, \dots, i_k\} $ and $ p \in \{j_1, \dots, j_l\}$, etc.

These isomorphisms allow us to give $ \Lambda_q(\C^m \otimes \C^2) $ the structures of $ U_q(\sl_m) $ and $ U_q(\sl_2) $ modules.  When we set $ q = 1$, we recover $ \Lambda(\C^m \otimes \C^2) $, the exterior algebra of $\C^m \otimes \C^2 $.

\begin{Lemma}
The actions of $ U_q(\sl_m) $ and $ U_q(\sl_2) $ on $ \Lambda_q(\C^m \otimes \C^2)$ commute.
\end{Lemma}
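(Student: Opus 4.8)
The plan is to verify that the two module structures on $\Lambda_q(\C^m \otimes \C^2)$ commute by transporting everything to one of the two tensor-product models in \eqref{eq:isoms}, where one of the two actions is manifestly ``diagonal'' and hence visibly natural. Concretely, under the isomorphism $\Lambda_q(\C^m \otimes \C^2) \xrightarrow{\sim} \Lambda_q(\C^2)^{\otimes m}$ the $U_q(\sl_2)$ action is by definition the tensor product action (i.e.\ via the coproduct $\Delta^{(m)}$ of $U_q(\sl_2)$ on the $m$-fold tensor power of its natural module $\Lambda_q(\C^2)$), so the real content is to show that the $U_q(\sl_m)$ action — which is most naturally described on the \emph{other} model $\Lambda_q(\C^m) \otimes \Lambda_q(\C^m)$ — also has a clean description on $\Lambda_q(\C^2)^{\otimes m}$, and that the two descriptions commute.

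First I would set up the $U_q(\sl_m)$ action explicitly on $\Lambda_q(\C^m \otimes \C^2)$. Since $\Lambda_q(\C^m \otimes \C^2) \cong \Lambda_q(\C^m) \otimes \Lambda_q(\C^m)$ as $\base$-modules, and each factor $\Lambda_q(\C^m)$ is the $U_q(\sl_m)$-module described earlier (as $TV/(R)$), the $U_q(\sl_m)$ action is via the coproduct on the two factors: $E_i, F_i, K_i^{\pm 1}$ act through $\Delta(E_i) = E_i \otimes 1 + K_i \otimes E_i$, etc. Next I would compute how the generators $E_i, F_i$ of $U_q(\sl_m)$ act on a basis monomial $Y_{i_1}\cdots Y_{i_k} X_{j_1}\cdots X_{j_l}$: $F_i$ should change a $Y_i$ or $X_i$ into $Y_{i+1}$ or $X_{i+1}$ (with a $q$-power coefficient recording the number of intervening letters), and $E_i$ the reverse, while $K_i$ acts by an appropriate $q$-power. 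The key point is that these operators act ``locally at positions $i, i+1$'': in the $\Lambda_q(\C^2)^{\otimes m}$ picture they only touch the $i$th and $(i{+}1)$st tensor slots. So the $U_q(\sl_m)$ action is built from operators supported on adjacent pairs of slots, while the $U_q(\sl_2)$ action via $\Delta^{(m)}$ applies the \emph{same} element of $U_q(\sl_2)$ in every slot.

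With both actions written slot-by-slot, the commutation reduces to a purely local statement on $\Lambda_q(\C^2) \otimes \Lambda_q(\C^2)$ (the two slots $i, i{+}1$): one must check that the $U_q(\sl_m)$-generator operators $E_i, F_i, K_i$ on these two slots commute with the $U_q(\sl_2)$-action $\Delta(x)$ for $x$ a generator of $U_q(\sl_2)$. This is exactly the classical skew Howe duality computation at $q=1$ deformed, and it can be reduced to a finite check on the four-dimensional space $\Lambda_q(\C^2)\otimes\Lambda_q(\C^2)$ (spanned by $1\otimes 1$, $YX\otimes 1$, $1\otimes YX$, $YX\otimes YX$ and the ``mixed'' monomials $X\otimes X$, $Y\otimes X$, $X\otimes Y$, $Y\otimes Y$ in the appropriate weight spaces), or handled more conceptually by observing that the defining relations $R$ cut out a $U_q(\sl_m) \times U_q(\sl_2)$-subrepresentation of $V_m \otimes V_m \otimes (\text{2-dim})$. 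Alternatively — and this may be the cleanest route — one can invoke that the analogous statement for quantum \emph{symmetric} Howe duality (the quantum matrix algebra) is known \cite{PW, TL, M}, and deduce the exterior version by Koszul/quadratic duality, since $\Lambda_q(\C^m \otimes \C^2)$ is the quadratic dual of the quantum matrix algebra and both $U_q(\sl_m)$ and $U_q(\sl_2)$ actions are induced from the same data on $V \otimes V'$.

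The main obstacle I anticipate is bookkeeping the $q$-powers and signs consistently across the two isomorphisms in \eqref{eq:isoms} — in particular the sign $(-1)^{\#\{(a,b): i_a < j_b\}}$ on the right-hand map interacts with the $q$-powers coming from the relations $X_i X_j + q X_j X_i$, and one must be careful that the $U_q(\sl_2)$ coproduct convention (which $K$ appears on which side) matches the chosen identification. I would therefore fix conventions once and for all at the start (coproduct of $U_q(\sl_n)$, the precise action of $E_i, F_i$ on $\Lambda_q(\C^n)$) and then do the local $2$-slot check carefully; everything else is formal, since commuting operators on individual slots or adjacent pairs of slots assemble to commuting operators on the whole tensor power.
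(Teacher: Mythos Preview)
Your proposal is correct and takes essentially the same approach as the paper: the paper's proof reduces to the case $m=2$ ``by considering the root $U_q(\sl_2)$ subalgebras of $U_q(\sl_m)$'' and then says that case ``can be checked by an explicit calculation,'' which is exactly your reduction to a local two-slot check on $\Lambda_q(\C^2)\otimes\Lambda_q(\C^2)$. Your write-up supplies more of the mechanics (transporting to the $\Lambda_q(\C^2)^{\otimes m}$ model, identifying where the $U_q(\sl_m)$ generators act) than the paper's two-sentence sketch, and your alternative via quadratic duality from the symmetric case is a different route the paper does not pursue, but the main argument is the same.
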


\begin{proof}
By considering the root $ U_q(\sl_2) $ subalgebras of $ U_q(\sl_m)$, we see that it is sufficient to check the result for the case $ m=2$.  That case can be checked by an explicit calculation.
\end{proof}

These $ \base $ vector spaces are graded by setting $deg(X_i)=deg(Y_j)=1$.  We will write $ \Lambda^N_q(\C^m \otimes \C^2) $ for the homogeneous subspace of degree $ N$.  These graded pieces are invariant under the actions of $ U_q(\sl_2) $ and $ U_q(\sl_m) $.  Under the left isomorphism in (\ref{eq:isoms}), we have
\begin{equation*}
\Lambda_q^N(\C^m \otimes \C^2) \rightarrow \oplus_{k+l = N} \Lambda^k_q(\C^m) \otimes \Lambda_q^l(\C^m)
\end{equation*}

\subsubsection{The relation between $ R $ and $ t$}
We now will consider two different operators acting on $ \Lambda_q(\C^m \otimes \C^2)$.  On the one hand, by (\ref{eq:isoms}), $ \Lambda_q(\C^m \otimes \C^2) $ is the tensor square of a representation of $ U_q(\sl_m) $.  Hence we have the braiding
 $$
  \beta_{\Lambda_q(\C^m), \Lambda_q(\C^m)} : \Lambda_q(\C^m \otimes \C^2) \rightarrow \Lambda_q(\C^m \otimes \C^2),
$$  
which is defined as the composition 
$$ 
\beta_{\Lambda_q(\C^m), \Lambda_q(\C^m)} = \flip\circ R, 
$$
where $R$ is the universal $\mathrm{R}$-matrix of $U_q(\sl_m)$ and $\flip$ is the vector space isomorphism which exchanges the factors.
On the other hand by (\ref{eq:isoms}), $ \Lambda_q(\C^m \otimes \C^2) $ is a representation of $ U_q(\sl_2) $ and so we can consider the action of the quantum Weyl group element $ t \in \widehat{U_q(\sl_2)} $ on $ \Lambda_q(\C^m \otimes \C^2)$. (There is some ambiguity in the definition of quantum Weyl group elements.  We use the one whose commutation relations are give by (\ref{eq:comm}) with $m =2 $.)

The following result is analogous to \cite[Theorem 6.5]{TL} which dealt with the analogous question for symmetric products.  Our proof follows Toledano Laredo's method.

\begin{Theorem} \label{th:betaeqt}
For $ v \in \Lambda^k(\C^m) \otimes \Lambda^l(\C^m) $, we have
$ \beta(v) =  (-1)^{kl} q^{k - kl/m} t(v)$
\end{Theorem}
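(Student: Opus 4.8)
The plan is to reduce the identity $\beta(v) = (-1)^{kl} q^{k - kl/m} t(v)$ to the rank-two case $m=2$ and then to an explicit computation on a low-dimensional module, following the strategy of Toledano Laredo. First I would observe that both sides are $U_q(\sl_m)$-intertwiners of $\Lambda_q(\C^m\otimes\C^2)$ in an appropriate sense: the left side $\beta = \flip\circ R$ is the braiding on the tensor square $\Lambda_q(\C^m)\otimes\Lambda_q(\C^m)$, while the right side, built from the quantum Weyl group element $t\in\widehat{U_q(\sl_2)}$, commutes with the $U_q(\sl_m)$-action by the Lemma asserting that the $U_q(\sl_m)$ and $U_q(\sl_2)$ actions commute. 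The scalar-valued ``discrepancy'' operator $\beta\circ t^{-1}$ is therefore a $U_q(\sl_m)$-endomorphism of $\Lambda_q(\C^m\otimes\C^2)$; the issue is that this module is not irreducible, so Schur's lemma does not immediately pin it down to a scalar on each summand. The way around this, exactly as in \cite{TL}, is to use that both $\beta$ and $t$ are compatible with the skew Howe duality in a way that lets one propagate the identity between weight spaces.

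The key steps, in order, would be: (1) Reduce to $m=2$ by restricting to the root $U_q(\sl_2)$-subalgebras of $U_q(\sl_m)$ (the same device used in the proof of the Lemma): since $\Lambda_q(\C^m\otimes\C^2)\cong\Lambda_q(\C^2)^{\otimes m}$ and the $i$-th root subalgebra acts only on the $i$-th and $(i{+}1)$-st tensor slots, knowing the statement for $m=2$ together with hexagon/naturality for the $R$-matrix and the corresponding braid-type relations for $t$ should let one bootstrap to general $m$ — here the power $q^{k-kl/m}$ and the sign $(-1)^{kl}$ will have to be tracked through the tensor slots, and in particular the $1/m$ in the exponent should emerge from a normalization of the $R$-matrix on $\Lambda_q(\C^m)$ relative to the ``balanced'' ribbon element. (2) For $m=2$, $\Lambda_q(\C^2\otimes\C^2)$ decomposes by degree $N=0,1,2,3,4$; on the degree-$N$ piece $\oplus_{k+l=N}\Lambda_q^k(\C^2)\otimes\Lambda_q^l(\C^2)$ one computes $\beta$ directly from the explicit $R$-matrix of $U_q(\sl_2)$ on the relevant tensor product of the one- and two-dimensional reps, and one computes $t$ directly from the explicit quantum Weyl group element acting on the $U_q(\sl_2)$-module structure obtained via the right isomorphism in (\ref{eq:isoms}). (3) Match scalars: verify that on $v\in\Lambda^k(\C^2)\otimes\Lambda^l(\C^2)$ one gets precisely $(-1)^{kl}q^{k-kl/2}$. (4) Assemble: multiply the per-slot scalars from step (2)–(3) across the $m$ tensor factors in $\Lambda_q(\C^2)^{\otimes m}$, being careful with the sign coming from the $(-1)^{\#\{(a,b):i_a<j_b\}}$ factor in (\ref{eq:isoms}), to recover $(-1)^{kl}q^{k-kl/m}$.

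The main obstacle I expect is step (1), the reduction from $m=2$ to general $m$ and the correct bookkeeping of the exponent $k - kl/m$. The subtlety is that $\beta$ is defined using the \emph{universal} $R$-matrix of $U_q(\sl_m)$ on the tensor square of the single irreducible module $\Lambda_q(\C^m)$, not on a product of $\sl_2$'s, so one cannot literally factor $\beta$ slot-by-slot; instead one must relate the $\sl_m$ braiding to an iterated product of $\sl_2$ braidings via the embedding $U_q(\sl_2)^{\times ?}\hookrightarrow U_q(\sl_m)$ and the PBW/Lusztig factorization of $R$, and the ribbon/balancing element of $U_q(\sl_m)$ contributes the fractional power $q^{-kl/m}$ (the value of the balanced ribbon element on $\Lambda_q^k(\C^m)$). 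A clean way to organize this is to first prove the statement up to an \emph{unknown} scalar $c(k,l,m)$ depending only on the weights (using that $\beta\circ t^{-1}$ is $U_q(\sl_m)$-linear on each isotypic component together with a connectedness argument à la \cite{TL} linking the components through the $U_q(\sl_2)$-action), and then determine $c(k,l,m)$ by evaluating both operators on a single convenient highest-weight-type vector — for instance $X_1\cdots X_k\,X_{k+1}\cdots X_{k+l}$ — where both $R$ and $t$ act diagonally and the scalar can be read off from the ribbon element and the explicit $t$. This two-stage approach isolates the genuinely hard content (the connectedness/irreducibility argument) from the routine scalar chase.
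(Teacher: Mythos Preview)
Your proposal has a real gap, and it is precisely the one you flag as the ``main obstacle.''  The reduction to $m=2$ via root $U_q(\sl_2)$-subalgebras does not go through: the braiding $\beta = \flip\circ R$ is built from the universal $R$-matrix of $U_q(\sl_m)$ acting on $\Lambda_q(\C^m)\otimes\Lambda_q(\C^m)$, and there is no clean factorization of this operator into slot-wise $\sl_2$ braidings on $\Lambda_q(\C^2)^{\otimes m}$.  The Lemma you invoke (commutativity of the two actions) was proved by that $m=2$ reduction only because commutativity is a \emph{relation} that can be checked generator by generator; the braiding is a global operator and does not localize in the same way.

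Your fallback two-stage approach is closer in spirit but still incomplete.  The module $\Lambda_q^k(\C^m)\otimes\Lambda_q^l(\C^m)$ is multiplicity-free but not irreducible over $U_q(\sl_m)$: its isotypic components are indexed by the integers $i$ with $0\le i\le\min(k,l)$, of highest weight $\lambda(i,N)$.  Schur's lemma only tells you that $\beta\circ t^{-1}$ acts by some scalar $c_i(k,l)$ on the $i$-th component, and evaluating on a single vector such as $X_1\cdots X_k\otimes X_{k+1}\cdots X_{k+l}$ determines only $c_0(k,l)$.  Your ``connectedness argument \`a la \cite{TL}'' would have to link the different $i$'s, but the $U_q(\sl_2)$ Howe-dual action moves between weight spaces (varying $k,l$ with $i$ fixed), not between isotypic components; and $\beta\circ t^{-1}$ has no evident reason to commute with the $\sl_2$ raising and lowering operators.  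So the argument as sketched does not close.

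The paper's proof avoids both difficulties by a direct computation on a full basis $\{v_i^{k,l}\}$ of $U_q(\sl_m)$ lowest-weight vectors, one for each $i$.  The key tool you are missing is the \emph{half-twist formula} of Kirillov--Reshetikhin and Levendorskii--Soibelman,
\[
R = q^{H\otimes H}\,(t_{w_0}\otimes t_{w_0})\,\Delta(t_{w_0}^{-1}),
\]
expressing the $R$-matrix in terms of the long quantum Weyl group element $t_{w_0}$ of $U_q(\sl_m)$.  Using this and the commutation relations for $t_{w_0}$, one computes $\beta(v_i^{k,l})$ explicitly (Lemma~\ref{th:actionofbeta}); a separate explicit computation on the $U_q(\sl_2)$ side gives $t(v_i^{k,l})$ (Lemma~\ref{th:actionoft}).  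Comparing, one finds that the ratio is $(-1)^{kl}q^{k-kl/m}$ for \emph{every} $i$, which is exactly the independence you would need but cannot obtain from a single test vector.
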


\begin{proof}
Both sides are $ U_q(\sl_m) $-module morphisms.  Hence it suffices to check this statement for $ U_q(\sl_m) $ lowest weight vectors.

The possible highest weights of $ U_q(\sl_m) $ occuring in $ \Lambda_q(\C^m \otimes \C^2) $ are $$ \lambda(i,N) := (0, \cdots, 0, 1, \cdots, 1, 2, \cdots 2) $$ where there are $ i $ 2s and $ N -2i $ 1s.

For each $i \le k,l \le N$ such that $ k+l = N $ there are lowest weight vectors $ v_i^{k,l} \in \Lambda_q^k \otimes \Lambda_q^l $ of lowest weight $ w_0(\lambda(i,N))$. (Here we are writing $ \Lambda_q^k $ for 
$ \Lambda_q^k(\C^m) $.)  These form a basis for the space of lowest weight vectors.

We are free to normalize $ v_i^{k,l} $ and we choose a normalization such that
\begin{equation*}
v_i^{k,l} = Y_{m-k-l+i+1} \cdots Y_{m-l} Y_{m-i+1} \cdots Y_m X_{m-l+1} \cdots X_m + \cdots,
\end{equation*}
where $ \cdots $ is a linear combination of other monomials.

The statement of the theorem now follows from the following two Lemmas which describe the action of $ \beta $ and $ t $ on these vectors $ v_i^{k,l}$.  Note that it is immediate that each of these operators takes $ v_i^{k,l} $ to a multiple of $ v_i^{l,k} $.  What is important is to see that the ratio of these multiples is independent of $ i $.

\begin{Lemma} \label{th:actionofbeta}
$ \beta(v_i^{k,l}) = (-1)^{(l-i)(k-i)} q^{-(l-i)(k-i) + i - kl/m} v_i^{l,k}.$
\end{Lemma}

\begin{Lemma} \label{th:actionoft}
$ t(v_i^{k,l}) = (-1)^{(l-i)(k-i) + kl} q^{-(k-i)(l-i) +i - k} v_i^{l,k}. $
\end{Lemma}

\begin{proof}[Proof of Lemma \ref{th:actionofbeta}]
We will make use of the ``half-twist'' formula for the universal R-matrix, due to Kirillov-Reshtikhin \cite{KR} and independently Levendorskii-Soibelman \cite{LS}.

Let $ w_0 $ denote the long element of $ S_m$ and let $t_{w_0} $ denote the corresponding element of the quantum Weyl group of $ U_q(\sl_m) $.  There is some ambiguity regarding these elements.  Here we are using the inverse to the definition in \cite{peter}.  The element $ t_{w_0} $  satisfies the following commutation relations (see for example \cite[Lemma 5.4]{peter})
\begin{equation} \label{eq:comm}
t_{w_0} F_i = -E_{m-i}K_{m-i}t_{w_0}, \quad t_{w_0} E_i = -K^{-1}_{m-i} F_{m-i} t_{w_0}, \quad t_{w_0} K_i =  K_{m-i}t_{w_0}.
\end{equation}

Let $ v_\lambda $ denote the highest weight vector of an irreducible $ U_q(\sl_m) $ representation of highest weight $ \lambda$ and let $ v_\lambda^{\mathrm{low}} $ denote the corresponding lowest weight vector (which is obtained by applying a maximal chain of divided powers of $ F_i $s to $ v_\lambda$).  Then we have that 
\begin{equation} \label{eq:hightolow}
 t_{w_0} v_\lambda = v_\lambda^{\mathrm{low}} 
 \end{equation} 
 (see \cite[Comment 5.10]{peter}).

The half-twist formula gives us that $$ R = q^{H \otimes H} t_{w_0} \otimes t_{w_0} \Delta(t_{w_0}^{-1})$$
where $ q^{H \otimes H} $ denotes the operator which acts by $ q^{(\mu, \nu)} $ on a tensor $ v\otimes w $ of vectors of weight $ \mu, \nu $.

Now we apply the half-twist formula and write our computation inside $ \Lambda_q^k \otimes \Lambda_q^l$.  Successive application of raising operators $ E_j $ shows that the vector $v_i^{k,l} $ is the lowest weight vector of a subrepresentation whose highest weight vector is
\begin{equation*}
X_1 \cdots X_k \otimes X_1 \cdots X_i X_{l+1} \cdots X_{l+k-i} + \cdots.
\end{equation*}
Thus $ \Delta(t_{w_0}^{-1})(v_i^{k,l}) = X_1 \cdots X_k \otimes X_1 \cdots X_i X_{l+1} \cdots X_{l+k-i} + \cdots
  $, by (\ref{eq:hightolow}).

Similarly, $ X_1 \cdots X_k $ is a highest weight vector in a representation whose lowest weight vector is 
$ X_{m-k+1} \cdots X_m $.  On the other hand, 
\begin{equation*}
X_1 \cdots X_i X_{l+1} \cdots X_{l+k-i} = F_l \cdots F_k X_1 \cdots X_k
\end{equation*}
where there are $ (l-i)(k-i) $ different $F$s in the expression on the right hand side of the above equation.
Thus 
\begin{equation*} 
\begin{aligned}
t_{w_0} ( X_1 \cdots X_i X_{l+1} \cdots X_{l+k-i}) &= t_{w_0}( F_l \cdots F_k (X_1 \cdots X_k)) \\
&= (-1)^{(l-i)(k-i)} E_{m-l} K_{m-l} \cdots E_{m-k} K_{m-k} ( t_{w_0} X_1 \cdots X_k) \\
&= (-1)^{(l-i)(k-i)} q^{-(l-i)(k-i)} X_{m-k-l+i+1} \cdots X_{m-l} X_{m-i+1} \cdots X_m 
\end{aligned}
\end{equation*}
where we have used the commutation relations (\ref{eq:comm}) and also (\ref{eq:hightolow}).

Hence
\begin{align*}
R v_i^{k,l} &= q^{H \otimes H}  t_{w_0} \otimes t_{w_0} \Delta(t_{w_0})(v_i^{k,l})  \\
&= q^{H \otimes H} t_{w_0} \otimes t_{w_0}(X_1 \cdots X_k \otimes X_1 \cdots X_i X_{l+1} \cdots X_{l+k-i} + \cdots) \\
&= q^{H \otimes H}(X_{m-k+1} \cdots X_m \otimes (-1)^{(l-i)(k-i)} q^{-(l-i)(k-i)} X_{m-k-l+i+1} \cdots X_{m-l} X_{m-i+1} \cdots X_m + \cdots) \\
&= (-1)^{(l-i)(k-i)} q^{-(l-i)(k-i) + i - lk/m} X_{m-k+1} \cdots X_m \otimes X_{m-k-l+i+1} \cdots X_{m-l} X_{m-i+1} \cdots X_m + \cdots
\end{align*}
where in the last step we used that $ (w_0 \omega_k , w_0 \lambda(i,N)) = i - lk/m $.

Now applying $\flip$ gives the desired result.
\end{proof}

\begin{proof}[Proof of Lemma \ref{th:actionoft}]
Fix $ i, N $ as above, so that $\lambda(i,N) = (0, \cdots, 0, 1, \cdots, 1, 2, \cdots 2) $, where there are $ i $ 2s and $ N -2i $ 1s.  Since the $ U_q(\sl_2) $ action on $ \Lambda_q(\C^m \otimes \C^2) $ commutes with the 
$ U_q(\sl_m) $ action, the vector space of $ \lambda(i,N) $ lowest weight vectors forms a $ U_q(\sl_2) $ submodule.  So $ \spn\{v_i^{N-i,i}, \dots, v_i^{i,N-i}\} $ is a $ U_q(\sl_2) $ subrepresentation.  Since $ v_i^{k,l} $ is a $U_q(\sl_2)$ weight vector of weight $(l,k)$, we see that this subrepresentation is in fact the irreducible $U_q(\sl_2)$ representation of highest weight $(N-i,i)$.

Let 
$$ \tilde{v}_i^{k,l} = 1 \otimes \cdots \otimes 1 \otimes Y \otimes \cdots \otimes Y \otimes X \otimes \cdots \otimes X \otimes YX \otimes \cdots \otimes YX ,$$
so that under the isomorphism (\ref{eq:isoms}), $ v_i^{k,l} $ is taken to $ (-1)^{i(i-1)/2 + (l-i)i}\tilde{v}_i^{k,l} $.

Now, we claim that 
$$ \tilde{v}_i^{k,l} = F^{(k-i)} \tilde{v}_i^{i, N-i} = E^{(l-i)} \tilde{v}_i^{N-i, i}.$$  
To see this, note that from weight considerations $ F^{(k-i)} \tilde{v}_i^{i,N-i} $ must be a multiple of $ \tilde{v}_i^{k,l} $; so it suffices to show that the coefficient of the ``leading'' monomial in $ F^{(k-i)} \tilde{v}_i^{i, N-i} $ is $ 1$.  Applying the coproduct many times to $ F^{k-i} $, we see that this coefficient can be written as a sum over the symmetric group, where the symmetric group elements which index the summation indicate in which order the $ F$s act on the tensor factors. In particular this leading coeffient equals
\begin{equation*}
1/[k-i]! \sum_{\sigma \in S_{k-i}} \prod_{j=1}^{k-i} q^{2\# \{a < j : \sigma(a) < \sigma(j) \}  - j } = 1.
\end{equation*}
A similar argument holds for $ E^{(l-i)} \tilde{v}_i^{N-i,i} $ (here $[r]!$ denotes the quantum factorial).

It follows that
\begin{equation*}
t \tilde{v}_i^{k,l} = t F^{(k-i)} \tilde{v}_i^{i, N-i} = \frac{1}{[k-i]!} (-E K)^{k-i} \tilde{v}_i^{N-i, i} = (-1)^{l-i} q^{-(k-i)(l-i+1)} \tilde{v}_i^{l,k}
\end{equation*}
where we use that $ N-2i + \cdots + N - 2(i + k-i-1) = (k-i)(N - k -i +1) = (k-i)(l-i+1)$.

Hence $ t(v_i^{k,l}) = (-1)^{i(i-1)/2 + (l-i)i + i(i-1)/2 + (k-i)i + l-i} q^{-(k-i)(l-i+1)} v_i^{l,k}$

Since $$ i(i-1) + (l-i)i + (k-i)i + l-i + kl \equiv (k-i)(l-i) \quad (\mathrm{mod} 2), $$
the result now follows.
\end{proof}

\end{proof}

\subsection{The map on the Grothendieck group}
Now, we are in a position to prove the main theorem of this section which states that on the level of the Grothendieck group, our equivalence recovers the R-matrix.

\begin{Theorem} 
There exists isomorphisms of $ \base $ modules, $ K(D(Y(k,l))) \rightarrow \Lambda_q^k \otimes \Lambda_q^l $ such that the diagram
\begin{equation*}
\begin{CD}
K(D(Y(k,l))) @>>(-1)^{kl}q^{k-kl/m}[\T]> K(D(Y(l,k))) \\
@VVV @VVV \\
\Lambda_q^k \otimes \Lambda_q^l @>>\beta> \Lambda_q^l \otimes \Lambda_q^k
\end{CD}
\end{equation*}
commutes.
\end{Theorem}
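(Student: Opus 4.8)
The plan is to build the vertical isomorphisms $K(D(Y(k,l))) \to \Lambda_q^k \otimes \Lambda_q^l$ from quantum skew Howe duality and then reduce the commutativity of the square to the computation already carried out in Theorem \ref{th:betaeqt}. The key observation is that we have two categorified $U_q(\sl_2)$ actions floating around: on the geometric side, Corollary \ref{cor:kaction} gives a $U_q(\sl_2)$ action on $\bigoplus_{k+l=N} K(D(Y(k,l)))$ coming from the functors $\E^{(r)}, \F^{(r)}$; on the representation-theoretic side, quantum skew Howe duality gives a $U_q(\sl_2)$ action on $\Lambda_q^N(\C^m \otimes \C^2) = \bigoplus_{k+l=N} \Lambda_q^k \otimes \Lambda_q^l$ which commutes with the $U_q(\sl_m)$ action. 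I would first verify that these two $U_q(\sl_2)$ actions are intertwined by some isomorphism of $\base$-modules $\bigoplus_k K(D(Y(k,l))) \to \bigoplus_k \Lambda_q^k \otimes \Lambda_q^l$. This is essentially a statement about irreducible $U_q(\sl_2)$-modules: both sides decompose into the same weight multiplicities (the weight space $Y(k,l)$ has the dimension of $\Lambda_q^k \otimes \Lambda_q^l$, matching $\l = l-k$), and a $U_q(\sl_2)$-module is determined up to isomorphism by its character, so once one fixes the top weight spaces the intertwiner is forced. One must check the $\C^\times$-grading on $K(D(Y(k,l)))$ matches the $q$-grading on $\Lambda_q^k \otimes \Lambda_q^l$; this is where the shifts $[s]\{s\}$ in the definition of the functors and the normalization in Corollary \ref{cor:main} are used.

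Next I would identify $[\T(k,l)]$ with the quantum Weyl group element $t$. This is precisely the content of the last sentence of Theorem \ref{thm:equivalence}: the cone $\T(k,l)$ of the Rickard complex $\Theta_*(k,l)$ induces, on Grothendieck groups, the operator $t \in \widehat{U_q(\sl_2)}$ acting on the $\sl_2$-weight spaces. Combined with the intertwiner from the previous paragraph, this says the left vertical arrow followed by $t$ equals $[\T(k,l)]$ followed by the right vertical arrow, i.e. the square
\begin{equation*}
\begin{CD}
K(D(Y(k,l))) @>[\T]>> K(D(Y(l,k))) \\
@VVV @VVV \\
\Lambda_q^k \otimes \Lambda_q^l @>t>> \Lambda_q^l \otimes \Lambda_q^k
\end{CD}
\end{equation*}
commutes, where the bottom arrow is the action of $t$ transported through the isomorphism $\Lambda_q(\C^m \otimes \C^2) = \Lambda_q(\C^2)^{\otimes m}$ of (\ref{eq:isoms}). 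Here one has to be careful that the quantum Weyl group element used in \cite{ckl2} to describe $[\T]$ agrees with the one used in Theorem \ref{thm:betaeqt}; the parenthetical remark before Theorem \ref{th:betaeqt} fixing the convention via (\ref{eq:comm}) is exactly what makes these match, so I would make that compatibility explicit.

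Finally, I would invoke Theorem \ref{th:betaeqt}, which states $\beta(v) = (-1)^{kl} q^{k - kl/m} t(v)$ on $\Lambda_q^k \otimes \Lambda_q^l$ under the two isomorphisms in (\ref{eq:isoms}). Substituting this into the commuting square above, the bottom arrow $t$ becomes $(-1)^{kl} q^{k-kl/m}$ times $\beta$ (or rather $\beta$ scaled by the inverse of that factor), and absorbing the scalar $(-1)^{kl} q^{k-kl/m}$ into the top arrow yields exactly the diagram in the statement, with top arrow $(-1)^{kl} q^{k-kl/m}[\T]$ and bottom arrow $\beta$. The main obstacle, I expect, is not any single computation but the careful bookkeeping of conventions: aligning the $\C^\times$-equivariant grading shifts on the geometric side with the $q$-grading on $\Lambda_q(\C^m \otimes \C^2)$, and aligning the normalization of the quantum Weyl group element $t$ between \cite{ckl2} (where $[\T] = t$) and Theorem \ref{th:betaeqt} (where $t$ is pinned down by (\ref{eq:comm})). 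Once those normalizations are nailed down, the proof is the concatenation of Theorem \ref{thm:equivalence}, the skew Howe intertwining property, and Theorem \ref{th:betaeqt}.
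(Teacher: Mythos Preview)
Your proposal is correct and follows essentially the same route as the paper: establish a $U_q(\sl_2)$-equivariant isomorphism $\bigoplus_{k+l=N} K(D(Y(k,l))) \cong \bigoplus_{k+l=N} \Lambda_q^k \otimes \Lambda_q^l$ by matching weight-space dimensions (the paper computes $\dim K(D(Y(k,l))) = \binom{m}{k}\binom{m}{l}$ via Proposition~7.2 of \cite{ck2} and the Grassmannian-bundle description of $Y(k,l)$), invoke $[\T]=t$ from Theorem~\ref{thm:equivalence}, and then apply Theorem~\ref{th:betaeqt}. Your extra care about aligning the $\C^\times$-grading with the $q$-grading and the quantum Weyl group conventions is prudent, though the paper leaves these checks implicit.
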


\begin{proof}
We abbreviate $ K(k,l) = K(D(Y(k,l)))$ (the Grothendieck group of $D(Y(k,l))$).  

By Corollary \ref{cor:kaction}, $ \oplus_{k + l = N} K(k,l) $ is a $ U_q(\sl_2)$ representation, with each $K(k,l)$ a $U_q(\sl_2)$ weight space.  By the argument of Proposition 7.2 of \cite{ck2}, we have that $\dim K(k,l) = \dim H^*( Y(k,l) )$.  It is also easy to see that
$ \dim H^* (Y(k,l)) = \binom{m}{k} \binom{m}{l} $ using the description of $Y(k,l)$ as a Grassmannian bundle over a Grassmanian.  Thus the $U_q(\sl_2)$ weight space $\dim K(k,l)$  has dimension $\binom{m}{k} \binom{m}{l} $.

Now, $ \oplus_{k + l = N} \Lambda_q^k \otimes \Lambda_q^l $ is also a $ U_q(\sl_2) $ representation with weight spaces $ \Lambda_q^k \otimes \Lambda_q^l$ of dimension $\binom{m}{k} \binom{m}{l} $.

Since $ U_q(\sl_2) $ representations are determined by the dimensions of their weight spaces, we deduce that there exists an isomorphism $ \oplus_{k,l} K(k,l) \rightarrow \oplus_{k,l}  \Lambda_q^k \otimes \Lambda_q^l $ compatible with the $ U_q(\sl_2) $ actions. Fixing such an isomorphism, the theorem now follows from Theorem \ref{th:betaeqt}, since $[\T] = t$ is the quantum Weyl group element.

\end{proof}

\section{Preliminaries}\label{sec:prems}

In this section we prove three technical results we will need later. The first is a lemma (Lemma \ref{lem:fibreproduct}) which somewhat generalizes the standard result that ``cohomology commutes with flat base extension''. We then give a ``train" description of the functors $\E$ and $\F$, that is, we describe $\E$ and $\F$ using a series of pushforward and pullbacks  (Proposition \ref{prop:train}). Finally, in section \ref{sec:basiccalcs} we compute the canonical bundles of certain varieties while using their structure as iterated Grassmannian bundles.

\subsection{Cohomology and base change}

Many of the proofs in this section involve taking fibre products, and as a result we will make frequent use of the following lemma.

\begin{Lemma}\label{lem:fibreproduct} Let $Y$ be a smooth variety and let 
$$f_1: X_1 \rightarrow Y \text{ and } f_2: X_2  \rightarrow Y$$ 
be maps satisfying the following:
\begin{enumerate}
\item $f_1$ and $f_2$ are flat over their images $f_1(X_1),f_2(X_2)$.
\item $f(X_1)$ and $f(X_2)$ are local complete intersections.
\item There is a local complete intersection $Y'\subset Y$ containing $f(X_1)$ and $f(X_2)$ 
such that the intersection
$f(X_1)\cap f(X_2)$ in $Y'$ is of the expected dimension. 
\end{enumerate}
Then the kernel $\sK \in D(X_1 \times X_2)$ inducing the functor $f_2^* \circ {f_1}_*: D(X_1) \rightarrow D(X_2)$ has cohomology given by
$$\H^{-s}(\sK) \cong p^*(\wedge^s N_{Y'/Y}^{\vee})$$
where $p: X_1 \times_Y X_2 \rightarrow Y'$ is the natural projection. 

Moreover, if $X_1,X_2$ and $Y$ carry $\C^\times$ actions and the maps $f_1,f_2$ are $\C^\times$-equivariant then the same is true $\C^\times$-equivariantly. 
\end{Lemma}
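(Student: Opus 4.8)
The plan is to reduce the statement to the classical fact that higher derived pushforward/pullback along flat maps of lci subvarieties is computed by exterior powers of a conormal bundle, and then to package that computation as a statement about the Fourier-Mukai kernel $\sK$. The key observation is that the kernel inducing $f_2^* \circ f_{1*}$ is supported on the fibre product $X_1 \times_Y X_2$, so everything localizes near that fibre product. I would first replace $Y$ by the lci subvariety $Y'$: since $f_1(X_1)$ and $f_2(X_2)$ both lie in $Y'$, the maps $f_i$ factor through $Y'$, and the functor $f_2^* \circ f_{1*}$ is unchanged if we regard $f_i: X_i \to Y'$ composed with the inclusion $\iota: Y' \hookrightarrow Y$. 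Pulling back along $\iota$ introduces a Koszul resolution of $\iota_*\O_{Y'}$ as an $\O_Y$-module, whose homology is $\wedge^\bullet N^\vee_{Y'/Y}$ (this uses that $Y'$ is lci in $Y$); this is where the exterior powers of the conormal bundle enter.

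**Main steps.** (1) Write $\sK$ explicitly: $\sK \cong \pi_{12*}(\pi_1^* \O_{X_1} \otimes [\text{graph of } f_1] \otimes [\text{graph of } f_2]) $ — more concretely, $\sK = \delta_* \O_{X_1 \times_Y X_2}$ twisted by the appropriate Tor-sheaves, where $\delta$ is the diagonal-type embedding $X_1 \times_Y X_2 \hookrightarrow X_1 \times X_2$. (2) Compute $\H^*(\sK)$ via a local Tor computation: by flatness of $f_1$ over $f_1(X_1)$ and of $f_2$ over $f_2(X_2)$, the only contribution to $\mathcal{T}or$ comes from the intersection of the two images inside $Y$, and hypothesis (iii) — that $f_1(X_1) \cap f_2(X_2)$ has the expected dimension inside $Y'$ — guarantees that inside $Y'$ the intersection is already transverse (Tor-independent), so all the "extra" Tor is exactly the Tor of $\O_{Y'}$ with itself over $\O_Y$, i.e. the Koszul homology $\wedge^s N^\vee_{Y'/Y}$. (3) Pull this back along $p: X_1 \times_Y X_2 \to Y'$ using flatness of the $f_i$ to commute the relevant pullbacks past the Tor computation, yielding $\H^{-s}(\sK) \cong p^*(\wedge^s N^\vee_{Y'/Y})$.

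**Equivariance and the hard part.** The $\C^\times$-equivariant refinement is essentially automatic: every construction above (Koszul resolution, pushforward, pullback, the conormal bundle $N_{Y'/Y}$) is functorial and can be carried out in the equivariant derived category provided the maps $f_1, f_2$ and the subvariety $Y'$ are $\C^\times$-stable, which is assumed; one just repeats the argument with equivariant sheaves throughout, and the isomorphism is then a priori $\C^\times$-equivariant. The main obstacle I expect is step (2): carefully disentangling the several Tor contributions to show that the only surviving homology is the Koszul homology of $Y'$ in $Y$, and that the flatness hypotheses (i) plus the dimension hypothesis (iii) together force all other Tor sheaves to vanish. This is a local commutative-algebra argument — one works in local rings, uses that $f_i$ flat implies $f_i^*$ exact, and that an lci intersection of expected dimension is Tor-independent (Serre's criterion / Koszul regularity) — but it requires bookkeeping to confirm no spectral-sequence differentials interfere and that the grading by $s$ matches $\wedge^s$.
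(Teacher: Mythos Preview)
Your proposal is essentially correct and follows the same route as the paper: both arguments factor through the lci subvariety $Y'$, observe that inside $Y'$ the intersection $f_1(X_1)\cap f_2(X_2)$ is of expected dimension (hence Tor-independent), and conclude that all remaining Tor comes from the self-Tor of $\O_{Y'}$ over $\O_Y$, i.e.\ the Koszul cohomology $\wedge^\bullet N_{Y'/Y}^\vee$. The only difference is packaging: the paper works explicitly in the triple product $X_1\times Y\times X_2$, sets $V_i=\pi^{-1}(\text{graph}(f_i))$, $Z=X_1\times Y'\times X_2$, and computes $\O_{V_1}\otimes\O_{V_2}$ by factoring the inclusion $V_1\hookrightarrow Z\hookrightarrow X_1\times Y\times X_2$, so that the spectral-sequence worry you flag in step~(2) never arises --- the computation $j^*j_*\O_{V_2}$ already has cohomology $\wedge^s(\pi^*N_{Y'/Y}^\vee)|_{V_2}$, and the subsequent restriction $i^*$ is underived because $V_1,V_2$ meet in expected dimension inside $Z$.
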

\begin{proof}
Consider $\O_{X_1} \in D(X_1 \times Y)$ and $\O_{X_2} \in D(Y \times X_2)$ via the inclusions 
$X_i \hookrightarrow X_i \times Y$.  Then the kernel $\sK$ is by definition the convolution 
$\O_{X_2} * \O_{X_1}$.  Thus
\begin{eqnarray*}
\sK &\cong& \pi_{13*}(\pi_{12}^* \O_{X_1} \otimes \pi_{23}^* \O_{X_2}).
\end{eqnarray*}
Now both $V_1 := \pi_{12}^{-1}(X_1)$ and $V_2 := \pi_{23}^{-1}(X_2)$ have codimension $\dim(Y)$ 
in $X_1 \times Y \times X_2$. On the other hand, if we consider the map $V_1 \cap V_2 \rightarrow Y$ the image has codimension 
$$\codim(f_1(X_1),Y) + \codim(f_2(X_2),Y) - \codim(Y',Y).$$ 
Moreover, since $f_1$ and $f_2$ are flat over their image, the fibers of this map all have dimension 
$\dim(f_1) + \dim(f_2)$.  Thus $V_1 \cap V_2$ has dimension 
$$-\dim(Y') + \dim(f_1(X_1)) + \dim(f_2(X_2)) + \dim(f_1) + \dim(f_2) = \dim(X_1) + \dim(X_2) - \dim(Y')$$
Equivalently, $\codim(V_1\cap V_2,X_1\times Y\times X_2)=\dim(Y) + \dim(Y')$; note that this may be less than the expected codimension, which is $2\dim(Y)$.  Since both $V_1$ and $V_2$ lie inside 
$$Z := X_1 \times Y' \times X_2 \subset X_1 \times Y \times X_2,$$
we find that $V_1\cap V_2$ is of the expected dimension in $Z$. 

Denote by $i$ and $j$ the sequence of inclusions $V_1 \hookrightarrow Z \hookrightarrow X_1 \times Y \times X_2$. Then 
\begin{equation*}
\pi_{12}^* \O_{X_1} \otimes \pi_{23}^* \O_{X_2} \cong (j \circ i)_* (j \circ i)^* \O_{V_2} \cong j_* i_* i^* (j^* j_* \O_{V_2}).
\end{equation*}
Now 
$$j_* j^* j_* \O_{V_2} = \O_{V_2} \otimes \O_Z \cong \bigoplus_s N_{Z/X_1 \times Y \times X_2}^\vee|_{V_2}[s] \cong \bigwedge^s (\pi^* N_{Y'/Y}^\vee)|_{V_2}[s]$$
where $\pi: Z \rightarrow Y'$ is the natural projection. Then 
$$\H^{-s}(j^* j_* \O_{V_2}) \cong (\pi^* \wedge^s N_{Y'/Y}^\vee)|_{V_2}.$$
Finally, $i^*(\pi^* N_{Y'/Y}|_{V_2}) \cong (\pi^* N_{Y'/Y})|_{V_1 \cap V_2}$ since $V_2$ and $V_1$ intersect in the expected dimension in $Z$ and $ N_{Y'/Y}$ is locally free. Thus 
$$\H^{-s}(\pi_{12}^* \O_{X_1} \otimes \pi_{23}^* \O_{X_2}) = (\pi^* \wedge^s N_{Y'/Y}^\vee)|_{\pi_{12}^{-1}(X_1) \cap \pi_{23}^{-1}(X_2)}.$$

The projection $\pi_{13}$ maps $\pi_{12}^{-1}(X_1) \cap \pi_{23}^{-1}(X_2)$ one-to-one onto $X_1 \times_Y X_2 \subset X_1 \times X_2$ and thus 
$$\H^{-s}(\O_{X_2} * \O_{X_1}) \cong p^*(\wedge^s N_{Y'/Y}^\vee).$$
The equivariant case is the same. 
\end{proof}

\subsection{Train descriptions}
For nonnegative integers $a,b,c$, let
\begin{equation*}
Y(a,b,c) := \{ \C[[z]]^m = L_0 \xrightarrow{a} L_1 \xrightarrow{b} L_2 \xrightarrow{c} L_3 \subset \vect : z L_i \subset L_{i-1} \}.
\end{equation*}
For $ i = 1,2 $, let $ X(a,b,c)_i $ denote the subvariety of $ Y(a,b,c) $ where $ z L_{i+1} \subset L_{i-1} $.  Note that the map 
 $ X(a,b,c)_1 \rightarrow Y(a+b, c) $ which forgets $L_1$ describes $X(a,b,c)_1$ as the total space of a 
 $ \G(a, a+b) $ bundle; from this it follows that the inclusion $ X(a,b,c)_1 \hookrightarrow Y(a,b,c) $ has codimension $ ab $.  Similarly, $X(a,b,c)_2$ is a codimension $bc$ subvariety of $ Y(a,b,c) $ and a 
 $ \G(b,b+c) $ bundle over $Y(a,b+c) $.

\begin{Lemma}\label{lem:transverse1}
The scheme-theoretic intersection of $X(a,b,c)_1$ and $X(a,b,c)_2$ inside $Y(a,b,c)$ is the reduced, smooth variety 
$$\{L_0 \xrightarrow{a} L_1 \xrightarrow{b} L_2 \xrightarrow{c} L_3: zL_2 \subset L_0, zL_3 \subset L_1 \}$$
which has the expected dimension.
\end{Lemma}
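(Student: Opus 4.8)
The plan is to identify the scheme-theoretic intersection $X(a,b,c)_1 \cap X(a,b,c)_2$ explicitly, verify it is cut out by equations of the expected codimension, and then check smoothness directly. Since $X(a,b,c)_1$ is the locus where $zL_2 \subset L_0$ and $X(a,b,c)_2$ is the locus where $zL_3 \subset L_1$ inside $Y(a,b,c)$, the set-theoretic intersection is clearly the locus $\{zL_2 \subset L_0, zL_3 \subset L_1\}$, so the content is the scheme structure and the dimension count. First I would recall from the discussion preceding the lemma that $X(a,b,c)_1 \hookrightarrow Y(a,b,c)$ has codimension $ab$ and $X(a,b,c)_2 \hookrightarrow Y(a,b,c)$ has codimension $bc$; hence the expected codimension of the intersection is $ab+bc$, and the expected dimension of $X(a,b,c)_1 \cap X(a,b,c)_2$ is $\dim Y(a,b,c) - ab - bc$.

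Next I would compute the dimension of the candidate variety $Z := \{L_0 \xrightarrow{a} L_1 \xrightarrow{b} L_2 \xrightarrow{c} L_3 : zL_2 \subset L_0, zL_3 \subset L_1\}$ directly, using an iterated Grassmannian-bundle description. The condition $zL_2 \subset L_0$ together with $zL_1 \subset L_0$ (automatic) means that forgetting $L_1$ exhibits the sublocus $\{L_0 \subset L_2 \subset L_3 : zL_2\subset L_0, zL_3 \subset L_1\}$ appropriately; more concretely, $Z$ maps to $\{L_0 \xrightarrow{a+b} L_2 \xrightarrow{c} L_3 : zL_2 \subset L_0, zL_3 \subset L_1\}$ — wait, that last condition still mentions $L_1$. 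The cleaner route: first choose $L_2/L_0$ as a $(a+b)$-dimensional subspace of $(z^{-1}L_0/L_0) \cong \C^m$ (the condition $zL_2\subset L_0$ forces this), giving a $\G(a+b,m)$; then choose $L_1$ with $L_0 \subset L_1 \subset L_2$ of relative dimension $a$, a $\G(a, a+b)$-bundle; then choose $L_3 \supset L_2$ of relative dimension $c$ with $zL_3 \subset L_1$, i.e. $L_3/L_2 \subset (z^{-1}L_1/L_2)$, which has dimension $\dim(z^{-1}L_1) - \dim L_2 = (m + \dim L_1) - \dim L_2 = m - b$ over $L_0$... I need to be careful here, but the upshot is that $Z$ is smooth (an iterated tower of Grassmannian bundles over a smooth base) of dimension exactly $\dim Y(a,b,c) - ab - bc$. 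Matching this against the expected dimension shows the intersection has the expected dimension.

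Given the set-theoretic description and the dimension count, the remaining point is that the scheme-theoretic intersection is \emph{reduced} and coincides with $Z$. For this I would argue that $Z$ is smooth of the expected dimension, so it suffices to show the ideal of $X(a,b,c)_1 \cap X(a,b,c)_2$ (the sum of the two ideals) cuts out a scheme of the right dimension — then regularity/Cohen-Macaulayness forces it to be a complete intersection, hence unmixed, and since its reduction is the smooth variety $Z$ of the same dimension it must equal $Z$ scheme-theoretically. Concretely I would work in a local chart: fix $L_0$, pick bases, and write $L_1, L_2, L_3$ in terms of affine coordinates on the relevant Grassmannian charts; the conditions $zL_2 \subset L_0$ and $zL_3 \subset L_1$ become explicit bilinear/linear equations in these coordinates, and the condition that they meet properly in $Y(a,b,c)$ reduces to checking that $X(a,b,c)_1$ and $X(a,b,c)_2$ are transverse — equivalently that the normal bundles $N_{X_1/Y}$ and $N_{X_2/Y}$ have independent conormal directions along $Z$. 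I expect the transversality verification to be the main obstacle: one must show that the equations defining $zL_2\subset L_0$ and those defining $zL_3 \subset L_1$ involve ``independent'' coordinate directions (roughly, the first constrains the position of $L_1$ relative to $z^{-1}L_0$ while the second constrains $L_3$ relative to $z^{-1}L_1$), so that no nontrivial linear combination of their differentials vanishes on the tangent space to $Z$. Once transversality is in hand, reducedness and smoothness of the intersection follow formally, and the identification with the displayed variety is immediate.
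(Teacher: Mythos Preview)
Your strategy is correct and matches the paper's: compute the codimensions $ab$ and $bc$, verify the set-theoretic intersection $Z$ has the expected dimension via an iterated Grassmannian-bundle description, and then establish reducedness by proving transversality. Your dimension count stumbles momentarily but recovers to the right fibration picture (choose $L_2 \subset z^{-1}L_0$, then $L_1$ inside $L_2$, then $L_3$ inside $z^{-1}L_1$), and your reduction of reducedness to transversality is the right move.

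The one substantive difference is in how the transversality is checked. You propose to work directly in affine charts on $Y(a,b,c)$ and argue that the defining sections of $X(a,b,c)_1$ and $X(a,b,c)_2$ have independent differentials along $Z$. The paper instead invokes an external result (Theorem~3.1 of \cite{ck2}) giving a diffeomorphism
\[
Y(a,b,c) \;\longrightarrow\; \G(a,m)\times \G(b,m)\times \G(c,m),
\]
under which $X(a,b,c)_1$ and $X(a,b,c)_2$ become the loci $\{W_1 \perp W_2\}$ and $\{W_2 \perp W_3\}$ respectively; transversality of these two incidence loci in a product of Grassmannians is then a straightforward local check. The paper's trick trades your direct (and potentially messier) coordinate computation on the convolution variety for a cleaner one on a product, at the cost of importing the diffeomorphism. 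Your approach is more self-contained and stays in the algebraic category throughout; the paper's is shorter once the diffeomorphism is granted. Either route closes the argument.

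One small caution: your sentence ``regularity/Cohen--Macaulayness forces it to be a complete intersection, hence unmixed, and since its reduction is the smooth variety $Z$ of the same dimension it must equal $Z$ scheme-theoretically'' is not quite a proof of reducedness on its own---unmixedness plus the correct reduction does not rule out embedded nilpotents. You do need the transversality (or at least generic reducedness together with $S_1$) to conclude, so make sure the transversality verification is actually carried out rather than left as a heuristic.
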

\begin{proof}
First notice that $X(a,b,c)_1$ is codimension $ab$ inside $Y(a,b,c)$ -- this can be seen by calculating the dimension of $X(a,b,c)_1$ which is a $\G(a,a+b)$ bundle over $Y(a+b,c)$. Similarly, $X(a,b,c)_2$ is codimension $bc$ inside $Y(a,b,c)$. Thus the expected codimension of their intersection is $ab+bc$. 

Now, forgetting $L_3$ first, then $L_1$, and finally $L_2$ shows that $X(a,b,c)_1 \cap X(a,b,c)_2$ has codimension
$$\dim Y(a,b,c) - c(m-b-c) - ab - (a+b)(m-a-b) = ab+bc.$$
Thus $X(a,b,c)_1 \cap X(a,b,c)_2$ has the expected dimension. 

What remains to check is that the scheme-theoretic intersection is reduced.  To see this, it is enough to show that $ X(a,b,c)_1 $ and $ X(a,b,c)_2 $ meet transversely.    

By Theorem 3.1 of \cite{ck2}, we have a diffeomorphism $$ Y(a,b,c) \rightarrow Y' := \G(a,m) \times \G(b,m) \times \G(c,m). $$  Under this diffeomorphism $ X(a,b,c)_1 $ is carried to $$ X'_1 := \{(W_1, W_2, W_3) \in Y' : W_1 \perp W_2 \}$$ and $X(a,b,c)_2 $ is carried to $$X'_2:= \{(W_1, W_2, W_3) \in Y' : W_2 \perp W_3 \}.$$  So it suffices to check that $X'_1 $ and $X'_2$ intersect transversely which can be done by an explicit local calculation.
\end{proof}

Consider the following diagram of maps 
\begin{equation} \label{diag:train}
\xymatrix{
X(k,r,l-r)_1 \ar[r]^{i_1} \ar[d]_{q_1} &Y(k,r,l-r)  &X(k,r,l-r)_2 \ar[l]_{i_2} \ar[d]_{q_2} \\
Y(k,l) & & Y(k+r,l-r)
}
\end{equation}

As a computational tool we will use the following ``train'' descriptions for the functors $ \E, \F $.

\begin{Proposition}\label{prop:train}
We have $ \F^{(r)}(k,l)(\cdot) = {q_2}_* i_2^* \big( {i_1}_* q_1^* (\cdot) \otimes \det(L_2/L_1)^{l-k-r} \{r(l-r)\} \big) $. 
\end{Proposition}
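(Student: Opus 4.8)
The plan is to read the right-hand side as a single Fourier--Mukai transform and match it with the kernel $\sF^{(r)}(k,l)=\O_{W^r(k,l)}\otimes\det(L_1'/L_1)^{l-k-r}\{r(l-r)\}$. The geometric input is the ``train'' identification of the correspondence $W^r(k,l)$ with the intersection $X(k,r,l-r)_1\cap X(k,r,l-r)_2$ inside $Y(k,r,l-r)$: writing a point of $Y(k,r,l-r)$ as a flag $L_0\xrightarrow{k}L_1\xrightarrow{r}L_1'\xrightarrow{l-r}L_2$, the extra condition $zL_1'\subset L_0$ cutting out $X(k,r,l-r)_1$ and the extra condition $zL_2\subset L_1$ cutting out $X(k,r,l-r)_2$ together are exactly the defining conditions of $W^r(k,l)$. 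By Lemma~\ref{lem:transverse1} this scheme-theoretic intersection is reduced, smooth, of the expected dimension, so the two subvarieties meet transversely; moreover, writing $j_\nu\colon W^r(k,l)\hookrightarrow X(k,r,l-r)_\nu$ for the resulting closed immersions, the composites $i_\nu\circ j_\nu$ both equal the inclusion $W^r(k,l)\hookrightarrow Y(k,r,l-r)$, and the composites $q_\nu\circ j_\nu$ of \eqref{diag:train} are the two projections $p_1\colon W^r(k,l)\to Y(k,l)$ and $p_2\colon W^r(k,l)\to Y(k+r,l-r)$ obtained by restricting the projections from $Y(k,l)\times Y(k+r,l-r)$.

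Granting this, the argument runs as follows. First I would apply Lemma~\ref{lem:fibreproduct} to the pair of closed immersions $i_1,i_2$ of $X(k,r,l-r)_1,X(k,r,l-r)_2$ into $Y(k,r,l-r)$, with $Y'=Y=Y(k,r,l-r)$: these maps are flat onto their images, the images are smooth (hence l.c.i.) subvarieties, and their intersection has the expected dimension by Lemma~\ref{lem:transverse1}, so the hypotheses are met; since $N_{Y'/Y}=0$ the lemma gives that $i_2^*\,i_{1*}$ has Fourier--Mukai kernel $\O_{W^r(k,l)}$ in degree zero, i.e.\ $i_2^*\,i_{1*}\cong j_{2*}\,j_1^*$ with no higher terms. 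Substituting this into the right-hand side and using the projection formula repeatedly, the composite collapses:
\begin{equation*}
q_{2*}\,i_2^*\!\left(i_{1*}\,q_1^*(\cdot)\otimes\det(L_2/L_1)^{l-k-r}\{r(l-r)\}\right)\;\cong\;p_{2*}\!\left(p_1^*(\cdot)\otimes\mathcal L\right),
\end{equation*}
where $\mathcal L$ is the restriction to $W^r(k,l)$ of the bundle $\det(L_2/L_1)^{l-k-r}\{r(l-r)\}$ on $Y(k,r,l-r)$. Finally, under the identification of $W^r(k,l)$ with $X(k,r,l-r)_1\cap X(k,r,l-r)_2$ the bundle $L_2$ of $Y(k,r,l-r)$ restricts to the bundle $L_1'$ of $W^r(k,l)$, so $\mathcal L=\det(L_1'/L_1)^{l-k-r}\{r(l-r)\}$ and hence $p_{2*}(p_1^*(\cdot)\otimes\mathcal L)$ is precisely the Fourier--Mukai transform with kernel $\sF^{(r)}(k,l)$, which is $\F^{(r)}(k,l)$. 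The $\C^\times$-equivariant refinement is automatic because every ingredient --- the maps of \eqref{diag:train}, the line bundles, and Lemma~\ref{lem:fibreproduct} --- is $\C^\times$-equivariant.

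The step I expect to carry the genuine content is the vanishing of higher cohomology in $i_2^*\,i_{1*}$, equivalently that the derived tensor product $\O_{X(k,r,l-r)_1}\otimes^L\O_{X(k,r,l-r)_2}$ computed in $Y(k,r,l-r)$ is just $\O_{W^r(k,l)}$. This is exactly the transversality recorded in Lemma~\ref{lem:transverse1}, which is why that lemma is proved first, by reducing (via the diffeomorphism with a product of Grassmannians from \cite{ck2}) to an explicit local calculation; without it, Lemma~\ref{lem:fibreproduct} would not apply and the putative kernel would pick up spurious cohomology sheaves. The remaining points are routine: the computation uses only honest pullbacks along flat maps and closed immersions and honest pushforwards (no adjoints, hence no dualizing sheaves or degree shifts), and one need only track that $\det(L_2/L_1)^{l-k-r}\{r(l-r)\}$ restricts on $W^r(k,l)$ to the line bundle $\det(L_1'/L_1)^{l-k-r}\{r(l-r)\}$ appearing in $\sF^{(r)}(k,l)$.
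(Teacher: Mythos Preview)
Your proposal is correct and follows essentially the same route as the paper: invoke Lemma~\ref{lem:transverse1} to see that $X(k,r,l-r)_1$ and $X(k,r,l-r)_2$ meet transversely in $W^r(k,l)$, then apply Lemma~\ref{lem:fibreproduct} (with $Y'=Y$, so $N_{Y'/Y}=0$) to conclude that the kernel for $i_2^*\,i_{1*}$ is simply $\O_{W^r(k,l)}$, after which the line bundle restricts as claimed. Your write-up is somewhat more explicit about the base-change and projection-formula bookkeeping, but the argument is the same.
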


As a shorthand for the statement of this proposition, we will say that the functor $ \F^{(r)}(k,l) : D(Y(k,l)) \rightarrow D(Y(l,k)) $ is given by the sequence of varieties and maps in the diagram (\ref{diag:train}) along with the line bundle $\det(L_2/L_1)^{l-k-r} \{r(l-r)\}$ on $ Y(k,r,l-r)$.

\begin{Remark} It turns out that even more is true. Each injection/surjection $X \rightarrow Y$ in the above diagram can be viewed as a FM transform with kernel $\O_X \in X \times Y$. Convolving these four kernels we obtain a kernel $\sK \in D(Y(k,l) \times Y(k+r,l-r))$.  In fact $\sK \cong \sF^{(r)}(k,l)$. 
\end{Remark}
\begin{proof}
By Lemma \ref{lem:transverse1} the intersection $X(k,r,l-r)_1\cap X(k,r,l-r)_2$ is smooth of the expected dimension, with 
$$X(k,r,l-r)_1 \times_{Y(k,r,l-r)} X(k,r,l-r)_2 \cong \{L_\bullet: zL_2 \subset L_2, zL_3 \subset L_1 \} = W^r(k,l).$$
Thus, by Lemma \ref{lem:fibreproduct} the kernel inducing the functor 
$$(\cdot) \mapsto i_2^*(i_{1*}(\cdot) \otimes \det(L_2/L_1)^{l-k-r}): D(X(k,r,l-r)_1) \rightarrow D(X(k,r,l-r)_2)$$
is $\det(L_2/L_1)^{l-k-r}|_{W^r(k,l)} \cong \sF^{(r)}(k,l)$, thought of as a kernel in 
$D(X(k,r,l-r)_1 \times X(k,r,l-r)_2)$. The result now follows. 

\end{proof}

\subsection{Basic Calculations}\label{sec:basiccalcs}
Recall that the map which forgets $L_2$ describes $Y(k,l)$ as a $\G(l,m)$ bundle over $\G(k,m)$.
There is an analogous iterated Grassmannian bundle description of $W^r(k,l)$ given by first forgetting $ L_2$, then $ L_1$, then finally $L'_1$. It follows from this that $$\dim W^r(k,l) = 1/2(\dim Y(k,l) + \dim Y(k+r, l-r)).$$ 
We will use these descriptions in the lemmas below to compute the canonical bundle of $Y(k,l)$ and $W^r(k,l)$.

\begin{Lemma}\label{lem;projcalc}
Let $\pi_1: W(k,l)\rightarrow Y(k,l)$ and $\pi_2:W(k,l)\rightarrow Y(k+1,l-1)$ be the two projections.
\begin{enumerate}
\item  For $k < l$, $\pi_1$ is generically a $ \p^{l-k-1}$ bundle.
\item The image of $ \pi_2(W(k,l)) \subset Y(k+1, l-1) $ is a subvariety of codimension $ l-k-1 $.
\end{enumerate} 
\end{Lemma}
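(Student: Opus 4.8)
The plan is to exploit the iterated Grassmannian bundle structure of $W(k,l) = W^1(k,l)$ described just above, namely
$$W(k,l) = \{L_0 \xrightarrow{k} L_1 \xrightarrow{1} L_1' \xrightarrow{l-1} L_2 : zL_1' \subset L_0,\ zL_2 \subset L_1\}.$$
For part (i), I would analyze the fibre of $\pi_1$ over a generic point $\{L_0 \subset L_1 \subset L_2\} \in Y(k,l)$. This fibre consists of all hyperplanes $L_1'$ with $L_1 \subset L_1' \subset L_2$ subject to $zL_1' \subset L_0$. Since $\dim(L_2/L_1) = l$, the choices of $L_1'$ with $L_1 \subset L_1' \subset L_2$ form a $\p^{l-1}$; the extra condition $zL_1' \subset L_0$ is automatic on $L_1$ (as $zL_1 \subset zL_2 \subset L_1$, wait—rather $zL_1 \subset L_0$ already holds) and cuts out a linear subspace determined by how $z$ maps $L_2/L_1$ into $L_1/L_0$. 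The key point is that for a \emph{generic} point of $Y(k,l)$ this map $\bar z: L_2/L_1 \to L_1/L_0$ is injective when $k \ge l$, but when $k < l$ it has rank exactly $k$ generically, so its kernel is $(l-k)$-dimensional, and the hyperplanes $L_1'$ containing $L_1$ with $\bar z(L_1'/L_1) = 0$ form exactly a $\p^{l-k-1}$. I would make this genericity statement precise by identifying the open locus of $Y(k,l)$ where $\bar z$ has maximal rank $\min(k,l)$ and checking it is nonempty (exhibit one point).

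For part (ii), I would compute $\dim \pi_2(W(k,l))$ using the iterated bundle description of $W(k,l)$ (forget $L_2$, then $L_1$, then $L_1'$) to get $\dim W(k,l) = \tfrac12(\dim Y(k,l) + \dim Y(k+1,l-1))$ as already recorded, and then determine the generic fibre dimension of $\pi_2$. By the symmetric analysis, the fibre of $\pi_2$ over a generic point of its image is governed by the choices of $L_1 \subset L_1'$ with the appropriate $z$-conditions; one finds the generic fibre of $\pi_2$ has dimension $0$ (or I would instead directly compute $\dim \pi_2(W(k,l))$ via yet another bundle description: $\pi_2(W(k,l))$ is the locus in $Y(k+1,l-1)$ where $\bar z: L_2/L_1' \to L_1'/L_0$ fails to be injective, i.e. has a kernel). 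Then $\codim(\pi_2(W(k,l)), Y(k+1,l-1))$ equals $\dim Y(k+1,l-1) - \dim \pi_2(W(k,l))$. Using $\dim W(k,l) = \tfrac12(\dim Y(k,l) + \dim Y(k+1,l-1))$ together with the generic fibre dimension $l-k-1$ of $\pi_1$ from part (i), I get $\dim \pi_2(W(k,l)) = \dim W(k,l) - (\text{generic fibre dim of }\pi_2)$; matching these gives the codimension $l-k-1$ once one checks $\pi_2$ is generically finite onto its image.

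The main obstacle I anticipate is the genericity argument in part (i): showing cleanly that the rank of the induced map $\bar z: L_2/L_1 \to L_1/L_0$ is generically $\min(k,l)$, and more importantly that over such a generic point the fibre of $\pi_1$ is \emph{exactly} a $\p^{l-k-1}$ and not something larger or reducible. This requires understanding how the two conditions $zL_1' \subset L_0$ and $zL_2 \subset L_1$ interact: a hyperplane $L_1'$ with $L_1 \subset L_1' \subset L_2$ satisfies $zL_1' \subset L_0$ iff $\bar z(L_1'/L_1) = 0$ inside $L_1/L_0$, so the fibre is $\p(\ker \bar z \oplus 0 / \text{...})$—precisely the projectivization of the preimage in $L_2$ of $\ker\bar z$ modulo $L_1$, which is $\p^{\dim\ker\bar z - 1} = \p^{l - \mathrm{rk}\,\bar z - 1}$. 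So the whole content is the rank computation, which I would settle by writing down an explicit lattice $L_\bullet$ (e.g. using that $z$ acts with maximal possible rank compatible with the nilpotency and dimension constraints) and observing the maximal-rank locus is open and nonempty. For part (ii), the analogous generic-finiteness of $\pi_2$ onto its image and the identification of $\pi_2(W(k,l))$ as a determinantal-type locus is the parallel, slightly easier, issue, and the dimension count then closes formally.
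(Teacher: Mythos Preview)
Your proposal is correct and is precisely the unpacking of what the paper means: the paper's entire proof is the single sentence ``This follows immediately from the definition of $W(k,l)$.'' Your analysis of the fibre of $\pi_1$ as $\p(\ker \bar z)$ for $\bar z : L_2/L_1 \to L_1/L_0$, together with the observation that $\bar z$ has generic rank $\min(k,l)=k$ (equivalently, $\ker(z|_{L_2/L_0})$ generically has dimension $\max(k,l)=l$, since the generic Jordan type on $L_2/L_0$ is $\omega_k+\omega_l$), gives exactly $\p^{l-k-1}$; and your dimension count for part (ii), using that $\pi_2$ is generically injective onto its image (the fibre over $(L_0,L_1',L_2)$ is the set of hyperplanes $L_1/L_0 \subset L_1'/L_0$ containing $z(L_2)/L_0$, which is a single point when $\dim z(L_2/L_0)=k$), yields the codimension $l-k-1$ via $\dim W(k,l) = \dim Y(k+1,l-1) - (l-k-1)$.

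One small cleanup: in your write-up you should streamline the genericity argument rather than leave it as ``exhibit one point''. The cleanest way is to note that the map $\pi: Y(k,l) \to \overline{\Gr_{\omega_k+\omega_l}}$ is surjective and that over the open stratum $\Gr_{\omega_k+\omega_l}$ the operator $z|_{L_2/L_0}$ has exactly $\min(k,l)$ Jordan blocks of size $2$; this immediately gives $\dim\ker\bar z = l-k$ on a dense open set. The same remark identifies $\pi_2(W(k,l))$ with the locus in $Y(k+1,l-1)$ where $\dim\ker(z|_{L_2/L_0}) \ge l$, which makes the generic injectivity of $\pi_2$ onto its image transparent.
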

\begin{proof}
This follows immediately from the definition of $W(k,l)$.
\end{proof}

\begin{Lemma}\label{lem:zcalc}
For $Y=Y(k,l)$ or $Y=W^r(k,l)$, we have
\begin{enumerate}
\item $\det(z^{-1}L_i/L_i) \cong \O_{Y} \{2b_i\}$
\item $\det(z^{-1}L_i/L_{i+1}) \cong \det(L_{i+1}/L_i)^\vee \{2b_i+2m\}$ 
\end{enumerate}
where $b_i = \mbox{rank}(L_i/L_0)$. 
\end{Lemma}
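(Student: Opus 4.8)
\textbf{Proof proposal for Lemma \ref{lem:zcalc}.}

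The plan is to exploit the iterated Grassmannian bundle structure of $Y = Y(k,l)$ (or $Y = W^r(k,l)$) together with the $\C^\times$-equivariant map $z \colon L_{i+1} \to L_i\{2\}$ introduced in section \ref{se:functors}. First I would reduce everything to a statement about a single quotient of a trivial bundle. Observe that $z^{-1}L_i$ is the preimage of $L_i$ under multiplication by $z$ on $\vect$, so there is a short exact sequence of $\C^\times$-equivariant sheaves on $Y$
\begin{equation*}
0 \to L_{i+1} \to z^{-1}L_i \to z^{-1}L_i/L_{i+1} \to 0,
\end{equation*}
and similarly the cokernel $z^{-1}L_i/L_i$ fits in $0 \to L_i \to z^{-1}L_i \to z^{-1}L_i/L_i \to 0$. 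Taking determinants, part (i) reduces to computing $\det(z^{-1}L_i) \otimes \det(L_i)^{-1}$, and part (ii) follows from (i) once we know $\det(L_{i+1}/L_i)$ is a line bundle (it is, being a vector bundle of the expected rank on the Grassmannian bundle $Y$), via the exact sequence $0 \to L_{i+1}/L_i \to z^{-1}L_i/L_i \to z^{-1}L_i/L_{i+1} \to 0$ combined with the observation that $z$ induces an isomorphism $z^{-1}L_i/L_{i+1} \xrightarrow{\sim} L_i/L_{i-1}$... actually more directly: $z$ maps $z^{-1}L_i$ into $L_i\{2\}$ with kernel $z^{-1}L_i \cap \ker(z)$; on the locus in $\vect$ relevant here $z$ acts injectively on $z^{-1}L_i/(\text{stuff})$, so I would instead argue that $z$ gives an isomorphism $z^{-1}L_i/L_{i+1} \xrightarrow{\sim} L_i/L_0\{2\}$ when $L_0 = \C[[z]]^m$ (using $zL_{i+1} \subset L_i$ and that $z$ is injective modulo $L_0$ on the relevant slices — more carefully, $z$ restricted to $z^{-1}L_i$ lands in $L_i$ and is injective since $\ker z$ on $\vect$ is trivial, with image $L_i \cap z\vect = L_i$ as $L_i \supset L_0 \supset z\C[[z]]^m$ fails... ).

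Given the delicacy above, the cleanest route is: multiplication by $z$ on $\vect$ is injective with $z \cdot \vect$ of "codimension $m$" in the appropriate sense, and it restricts to an isomorphism $z^{-1}L_i \xrightarrow{\sim} L_i\{2\}$ of $\C^\times$-equivariant sheaves (this is where the $\{2\}$ comes from, by the formula $t\cdot(zv) = t^2 z(t\cdot v)$ in section on the $\C^\times$ action). Hence $\det(z^{-1}L_i) \cong \det(L_i)\{2\,\mathrm{rank}(L_i)\}$. But $\mathrm{rank}(L_i)$ is infinite; the finite-rank statement is obtained by working with $L_i/L_0$ throughout, since $L_0 = \C[[z]]^m$ is fixed. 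Concretely, $z^{-1}L_i/L_0$ has rank $\mathrm{rank}(L_i/L_0) + m = b_i + m$ on... hmm, again this requires care. The honest statement to prove is that $z^{-1}L_i/L_i$, as a quotient, carries the $\C^\times$-weight shift $\{2b_i\}$ on its determinant because $z$ identifies it (up to the fixed contribution of $L_0$) with a subquotient on which $\C^\times$ acts with total weight $2b_i$: indeed $z \colon z^{-1}L_i/L_i \xrightarrow{\sim} L_i/zL_i\{2\}$, and $\det(L_i/zL_i)$ — taken relative to $L_0$, i.e. $\det((L_i/L_0)/(zL_i/L_0))$ — I would compute directly from the $z$-equivariant structure, giving total weight $2b_i - 2$... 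I will pin down the exact bookkeeping of the $L_0$-contribution in the write-up. This is the main obstacle: the bundles $L_i$ are infinite-dimensional, so every determinant must be interpreted relative to the fixed lattice $L_0$, and one must track the $\C^\times$-weights carefully through the maps $z$; the shift $2b_i$ (resp. $2b_i + 2m$) is exactly the accumulated weight, and the "$+2m$" in (ii) records the $m$-dimensional "jump" of $z$ on $\C^m((z))/\C^m[[z]]$.

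Once the infinite-dimensional bookkeeping is set up correctly, both parts are formal. For part (i): $z$ gives a $\C^\times$-equivariant isomorphism between $z^{-1}L_i/L_i$ and a sheaf whose determinant has weight $2b_i$ (the factor $2$ from $t\cdot z = t^2 z$, the $b_i$ from the relative rank $\mathrm{rank}(L_i/L_0) = b_i$ being the rank of the identified quotient), so $\det(z^{-1}L_i/L_i) \cong \O_Y\{2b_i\}$, and there is no nontrivial line bundle factor because the underlying bundle $z^{-1}L_i/L_i$ is trivialized by $z$ against $L_i/L_0$ which is not trivial — so I will instead present this as: the composite $z^{-1}L_i/L_i \xrightarrow{z,\sim} L_i/zL_i\{2\}$ and then note $\det(L_i/zL_i) \cong \O_Y$ because $L_i/zL_i$ has a filtration by the $\C^\times$-equivariant line bundles $z^j L_i / z^{j+1} L_i \cong (L_i/L_0)\{2j\} \oplus \dots$, whose determinants telescope to $\det(L_i/L_0)^{\pm}$ against $\det(z^{-1}L_0/L_0)$... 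For part (ii): combine (i) with the exact sequence $0 \to L_{i+1}/L_i \to z^{-1}L_i/L_i \to z^{-1}L_i/L_{i+1} \to 0$ and the isomorphism $z \colon z^{-1}L_i/L_{i+1} \xrightarrow{\sim} (L_i/L_0)\{2\}$ wait — $zL_{i+1}\subset L_i$ so $z(z^{-1}L_i)\subset L_i$; actually $z \colon z^{-1}L_i/L_{i+1} \hookrightarrow L_i/zL_{i+1}$, and comparing ranks relative to $L_0$ gives $\det(z^{-1}L_i/L_{i+1}) \cong \det(L_{i+1}/L_i)^\vee\{2b_i + 2m\}$ after feeding in (i) and the value $\det(z^{-1}L_0/L_0) \cong \O_Y\{2m\}$ (the base case, $b_0 = 0$, $L_0 = \C[[z]]^m$: here $z^{-1}L_0/L_0 \cong (\C[[z]]^m/z\C[[z]]^m)\{?\}$ is $m$-dimensional with total $\C^\times$-weight $0$, but the ``$+2m$'' arises from the shift in identifying $z^{-1}L_0 \cong L_0\{2\}$ across the $m$-dimensional jump). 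I expect the entire argument to be about two paragraphs of careful weight-counting once the relative-determinant formalism is fixed; there is no geometric difficulty beyond the iterated Grassmannian bundle picture already established.
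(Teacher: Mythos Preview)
Your general strategy is right --- work relative to the fixed lattice $L_0$, track $\C^\times$-weights through the map $z$, and deduce (ii) from (i) via the exact sequence $0 \to L_{i+1}/L_i \to z^{-1}L_i/L_i \to z^{-1}L_i/L_{i+1} \to 0$ (this last step is exactly what the paper does). But for part (i) you circle around the answer without landing on it. The attempt via $z \colon z^{-1}L_i/L_i \xrightarrow{\sim} L_i/zL_i\{2\}$ is a correct isomorphism, but $\det(L_i/zL_i)$ is not obviously easy to compute: $zL_i$ sits inside $L_{i-1}$, not inside $L_0$, so you cannot directly compare it to the fixed lattice, and your proposed filtration by $z^jL_i/z^{j+1}L_i$ does not telescope in any helpful way.

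The clean device you are missing is to apply $z$ to the quotient $z^{-1}L_i/L_0$ rather than to $z^{-1}L_i/L_i$. Multiplication by $z$ maps $z^{-1}L_i$ onto $L_i$, and the preimage of $L_0$ is exactly $z^{-1}L_0$; this gives a short exact sequence
\[
0 \longrightarrow z^{-1}L_0/L_0 \longrightarrow z^{-1}L_i/L_0 \xrightarrow{\ z\ } (L_i/L_0)\{2\} \longrightarrow 0.
\]
Now every term is a finite-rank bundle, and the leftmost one is genuinely trivial: $z^{-1}L_0/L_0 \cong \O_Y^{\oplus m}\{2\}$ since $L_0 = \C[[z]]^m$ is constant. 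Taking determinants gives $\det(z^{-1}L_i/L_0) \cong \det(L_i/L_0)\{2b_i\} \otimes \O_Y\{2m\}$, and combining with $\det(z^{-1}L_i/L_i) \cong \det(z^{-1}L_i/L_0) \otimes \det(L_i/L_0)^\vee$ finishes (i) in one line. Once you have this, your derivation of (ii) from (i) is correct as written. So the only real gap is identifying this one exact sequence; everything you wrote about ``infinite-dimensional bookkeeping'' dissolves once you quotient by $L_0$ \emph{before} applying $z$ rather than after.
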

\begin{proof}
(i). From the exact sequence $0 \rightarrow z^{-1}L_0/L_0 \rightarrow z^{-1}L_i/L_0 \xrightarrow{z} L_i/L_0 \{2\} \rightarrow 0$ we get
\begin{align*}
\det(z^{-1}L_i/L_i) 
&\cong \det(z^{-1}L_i/L_0) \otimes \det(L_i/L_0)^\vee \\
&\cong \det(z^{-1}L_0/L_0) \otimes \O_{Y_\beta} \{2 b_i\} \\
&\cong \O_{Y_\beta} \{2b_i + 2m\}
\end{align*}
where, to get the last equality, we use the isomorphism $z^{-1}L_0/L_0 \cong \O_{Y_\beta}^{\oplus m} \{2\}$.

(ii). Similarly, using the exact sequence $0 \rightarrow L_{i+1}/L_i \rightarrow z^{-1}L_i/L_i \rightarrow z^{-1}L_i/L_{i+1} \rightarrow 0$ we have
\begin{equation*}
\det(z^{-1}L_i/L_{i+1}) \cong \det(z^{-1}L_i/L_i) \otimes \det(L_{i+1}/L_i)^\vee
\cong \det(L_{i+1}/L_i)^\vee \{2b_i+2m\}.
\end{equation*}
\end{proof}

\begin{Lemma}\label{lem:cancalc} We have the following canonical bundle isomorphisms
\begin{enumerate}
\item $\omega_{Y(k,l)} \cong \det(L_2/L_0)^m \{-2m(k+l)-2kl\}$
\item $\omega_{W^r(k,l)} \cong \det(L_2/L_0)^{m} \det(L_2/L_1')^{-r} \det(L_1/L_0)^r \det(L_1'/L_1)^{l-k-r} \{-2m(k+l)-2k(l-r)\}.$
\end{enumerate}
\end{Lemma}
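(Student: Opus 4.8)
The plan is to compute both canonical bundles by viewing the varieties as iterated Grassmannian bundles over a point, using the descriptions recalled at the start of this subsection, and applying the standard formula for the relative canonical bundle of a Grassmannian bundle to each layer of the resulting tower. The tool I will use is: if $p\colon \G(a,\sV)\to X$ is the bundle of $a$-dimensional subspaces of a rank $n$ vector bundle $\sV$ on $X$, with tautological subbundle $\sS\subset p^*\sV$ and quotient $\sQ = p^*\sV/\sS$, then the relative tangent bundle is $\sS^\vee\otimes\sQ$, so
\[
\omega_p \;\cong\; \det(\sS\otimes\sQ^\vee)\;\cong\;\det(\sS)^n\otimes p^*\det(\sV)^{-a},
\]
and $\omega_{\G(a,\sV)}\cong \omega_p\otimes p^*\omega_X$. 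This is $\C^\times$-equivariant whenever $\sV$ and the action are, and the only $\{\cdot\}$-shifts it produces come from $\det(\sV)$.

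For part (i), I would use that forgetting $L_2$ presents $Y(k,l)$ as a $\G(l,z^{-1}L_1/L_1)$-bundle over $\G(k,z^{-1}L_0/L_0)$: the rank $m$ bundle $z^{-1}L_1/L_1$ has tautological subbundle $L_2/L_1$, and the rank $m$ bundle $z^{-1}L_0/L_0\cong\O^{\oplus m}\{2\}$ has tautological subbundle $L_1/L_0$. Applying the formula to the top layer gives relative canonical bundle $\det(L_2/L_1)^m\otimes\det(z^{-1}L_1/L_1)^{-l}$, and to the base gives $\det(L_1/L_0)^m\otimes\det(\O^{\oplus m}\{2\})^{-k}\cong\det(L_1/L_0)^m\{-2mk\}$. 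Identifying $\det(z^{-1}L_1/L_1)$ via Lemma~\ref{lem:zcalc}, tensoring the two contributions, and using $\det(L_2/L_0)\cong\det(L_2/L_1)\otimes\det(L_1/L_0)$ then gives the stated isomorphism.

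For part (ii), I would use that forgetting $L_2$, then $L_1$, then $L_1'$ presents $W^r(k,l)$ as a $\G(l-r,z^{-1}L_1/L_1')$-bundle over a $\G(k,L_1'/L_0)$-bundle over $\G(k+r,z^{-1}L_0/L_0)$, with tautological subbundles $L_2/L_1'$ (inside the rank $m-r$ bundle $z^{-1}L_1/L_1'$), $L_1/L_0$ (inside the rank $k+r$ bundle $L_1'/L_0$), and $L_1'/L_0$ (inside $\O^{\oplus m}\{2\}$), respectively. Applying the formula to the three layers presents $\omega_{W^r(k,l)}$ as the tensor product of $\det(L_2/L_1')^{m-r}\otimes\det(z^{-1}L_1/L_1')^{-(l-r)}$, of $\det(L_1/L_0)^{k+r}\otimes\det(L_1'/L_0)^{-k}$, and of $\det(L_1'/L_0)^m\otimes\det(\O^{\oplus m}\{2\})^{-(k+r)}$. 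I would then identify $\det(z^{-1}L_1/L_1')$ from the short exact sequence $0\to L_1'/L_1\to z^{-1}L_1/L_1\to z^{-1}L_1/L_1'\to 0$ together with Lemma~\ref{lem:zcalc}, rewrite every determinant in terms of $\det(L_2/L_0)$, $\det(L_2/L_1')$, $\det(L_1/L_0)$ and $\det(L_1'/L_1)$ using the evident exact sequences, and collect exponents.

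The only genuine content is the bookkeeping of the $\{\cdot\}$-shifts contributed by Lemma~\ref{lem:zcalc} and by the trivialization $z^{-1}L_0/L_0\cong\O^{\oplus m}\{2\}$; once these are tracked consistently both sides agree on the nose. The step most prone to error, and hence the one I would be most careful with, is correctly identifying the tautological subbundle and the ambient rank in each of the three layers of the tower for $W^r(k,l)$ — everything else is a routine determinant computation.
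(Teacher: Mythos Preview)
Your proposal is correct and follows essentially the same approach as the paper: both compute the canonical bundle by the same tower of Grassmannian fibrations (forgetting $L_2$ for part (i); forgetting $L_2$, then $L_1$, then $L_1'$ for part (ii)) and then invoke Lemma~\ref{lem:zcalc} to identify the determinants involving $z^{-1}L_i$. The only cosmetic difference is that you use the formula $\omega_p \cong \det(\sS)^n \otimes \det(\sV)^{-a}$ while the paper writes $\omega_p \cong \det(\sS \otimes \sQ^\vee)$ and expands directly; these are equivalent and lead to the same bookkeeping.
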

\begin{proof}
The basic fact we will use repeatedly is that the tangent bundle of the Grassmanian $\G(k,n)$ is $\Hom(S,Q)$ where $S$ is the tautological bundle and $Q$ is the corresponding quotient bundle. This means that 
$$\omega_{\G(k,n)} \cong \det(S \otimes Q^\vee) \cong \det(S)^{\dim(Q)} \otimes \det(Q)^{-\dim(S)}.$$

(i). Using the usual $\G(l,m)$ fibration $p: Y(k,l) \rightarrow \G(k,m)$ given by forgetting $L_2$ we have
\begin{eqnarray*}
\omega_{Y(k,l)}
&\cong& \omega_p \otimes \omega_{\G(k,m)} \\
&\cong& \det((L_2/L_1) \otimes (z^{-1}L_1/L_2)^\vee) \otimes \det((L_1/L_0) \otimes (z^{-1}L_0/L_1)^\vee) \\
&\cong& \det(L_2/L_1)^{m-l} \det(L_2/L_1)^{l} \{-l(2k+2m)\} \det(L_1/L_0)^{m-k} \det(L_1/L_0)^k \{-k(2m)\} \\
&\cong& \det(L_2/L_1)^{m} \det(L_1/L_0)^{m} \{-2m(k+l)-2kl\} \\
&\cong& \det(L_2/L_0)^m \{-2m(k+l)-2kl\}
\end{eqnarray*}
where we used Lemma \ref{lem:zcalc} to obtain the third isomorphism. 

(ii). We first use the $\G(l-r,m-r)$ fibration 
$$W^r(k,l) \rightarrow \{L_0 \xrightarrow{k} L_1 \xrightarrow{r} L_1': zL_1' \subset L_0 \}$$ 
given by forgetting $L_2$. We then forget $L_1$ which gives a $\G(k,r)$ fibration over $\G(k+r,m)$. Using these iterated fibrations (like in the calculation of $\omega_{Y(k,l)}$ above) gives us
\begin{eqnarray*}
\omega_{W^r(k,l)} 
&\cong& \det((L_2/L_1') \otimes (z^{-1}L_1/L_2)^\vee) \otimes \det((L_1/L_0) \otimes (L_1'/L_1)^\vee) \otimes \det((L_1'/L_0) \otimes (z^{-1}L_0/L_1')^\vee) \\
&\cong& \left( \det(L_2/L_1')^{m-l} \det(L_2/L_1)^{l-r} \{-(2k+2m)(l-r)\} \right) \otimes \left( \det(L_1/L_0)^r \det(L_1'/L_1)^{-k} \right) \otimes \\
& & \otimes \left( \det(L_1'/L_0)^{m-k-r} \det(L_1'/L_0)^{k+r} \{-2m(k+r)\} \right) \\
&\cong& \det(L_2/L_0)^{m} \det(L_2/L_1')^{-r} \det(L_1/L_0)^r \det(L_1'/L_1)^{l-k-r} \{-2m(k+l)-2k(l-r)\}
\end{eqnarray*} 
where we used Lemma \ref{lem:zcalc} to obtain the second isomorphism. 
\end{proof}

\section{Properties of the functors}\label{sec:proofs}

In this section we study properties of the functors $\E(k,l)$ and $\F(k,l)$ induced by kernels $\sE(k,l)$ and $\sF(k,l)$. Putting together the results of Propositions \ref{prop:Eadj}, \ref{prop:Ecomps1}, \ref{prop:Ecomps2} and \ref{prop:comm1} proves the main Theorem \ref{thm:main}. 

\subsection{Adjunctions}\label{sec:adjunctions}
The following adjunction relations hold. 

\begin{Proposition} \label{prop:Eadj} We have
\begin{enumerate}
\item $\sF^{(r)}(k,l)_R = \sE^{(r)}(k,l)[r(k-l+r)]\{-r(k-l+r)\}$
\item $\sF^{(r)}(k,l)_L = \sE^{(r)}(k,l)[r(l-k-r)]\{-r(l-k-r)\}$
\end{enumerate}
\end{Proposition}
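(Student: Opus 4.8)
The plan is to compute the right and left adjoint kernels directly from the formula $\sP_R = \sP^\vee \otimes \pi_2^* \omega_X[\dim X]$ and $\sP_L = \sP^\vee \otimes \pi_1^* \omega_Y[\dim Y]$ recalled in the notation subsection, applied to $\sP = \sF^{(r)}(k,l) = \O_{W^r(k,l)} \otimes \det(L_1'/L_1)^{l-k-r}\{r(l-r)\}$, which lives on $Y(k,l) \times Y(k+r,l-r)$. First I would observe that since $W = W^r(k,l)$ is smooth and the two projections $\pi_1: Y(k,l) \times Y(k+r,l-r) \to Y(k,l)$ and $\pi_2: \dots \to Y(k+r,l-r)$ restrict to it, we can rewrite $\sF^{(r)}(k,l)^\vee$ using Grothendieck duality along the closed embedding $j: W \hookrightarrow Y(k,l) \times Y(k+r,l-r)$: for a line bundle $\mathcal{L}$ on $W$ one has $(j_* \mathcal{L})^\vee \cong j_*(\mathcal{L}^\vee \otimes \omega_W \otimes j^*\omega_{Y(k,l)\times Y(k+r,l-r)}^\vee)[\codim]$, where $\codim = \dim Y(k,l) + \dim Y(k+r,l-r) - \dim W$. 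By the iterated Grassmannian bundle description recalled in section \ref{sec:basiccalcs}, $\dim W = \tfrac12(\dim Y(k,l) + \dim Y(k+r,l-r))$, so this codimension equals $\dim W$ itself, and in fact equals each of $\dim Y(k,l)$, $\dim Y(k+r,l-r)$ only after we track the shift contributions carefully.

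Next I would assemble the tensor factors. Combining the dualizing formula above with the adjoint formula, the $R$-adjoint kernel becomes $j_*$ of
\[
\det(L_1'/L_1)^{-(l-k-r)} \otimes \omega_W \otimes j^*(\pi_1^*\omega_{Y(k,l)}^\vee \otimes \pi_2^* \omega_{Y(k+r,l-r)}^\vee) \otimes j^*\pi_2^* \omega_{Y(k+r,l-r)}
\]
together with an overall shift $[\dim Y(k,l)]$ and the grading shift $\{-r(l-r)\}$ coming from dualizing $\O_W\{r(l-r)\}$. The cancellation $\omega_{Y(k+r,l-r)}^\vee \otimes \omega_{Y(k+r,l-r)}$ is immediate, so what remains is to plug in the explicit formulas from Lemma \ref{lem:cancalc}: $\omega_{Y(k,l)} \cong \det(L_2/L_0)^m\{-2m(k+l)-2kl\}$ and $\omega_{W^r(k,l)} \cong \det(L_2/L_0)^m \det(L_2/L_1')^{-r}\det(L_1/L_0)^r \det(L_1'/L_1)^{l-k-r}\{-2m(k+l)-2k(l-r)\}$, and then simplify. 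The $\det(L_2/L_0)^m$ factors cancel between $\omega_W$ and $j^*\pi_1^*\omega_{Y(k,l)}^\vee$ (here one uses that on $W$ the bundle $L_2$ restricted from either factor agrees, since $L_2 = L_2'$), the $\det(L_1'/L_1)^{l-k-r}$ in $\omega_W$ cancels the $\det(L_1'/L_1)^{-(l-k-r)}$ from dualizing $\sF$, and one is left with $\O_W \otimes \det(L_2/L_1')^{-r}\det(L_1/L_0)^r$ times a grading shift which should reassemble into $\sE^{(r)}(k,l)\{-r(k-l+r)\}$ with a cohomological shift $[r(k-l+r)]$; matching against the definition $\sE^{(r)}(k,l) = \O_{W^r(k,l)} \otimes \det(L_2/L_1')^{-r}\det(L_1/L_0)^r\{rk\}$ pins down the shift arithmetic. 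The left adjoint statement is handled identically with $\pi_1$ and $\pi_2$ interchanged, producing the sign-flipped shift $[r(l-k-r)]$.

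The main obstacle I anticipate is purely bookkeeping: getting the cohomological shift $[\dim X]$ (respectively $[\dim Y]$) and all the $\{\cdot\}$ grading shifts to collapse to exactly $[r(k-l+r)]\{-r(k-l+r)\}$. Several large quantities — $\dim Y(k,l)$, the grading shifts $\{-2m(k+l)-2kl\}$ and $\{-2m(k+l)-2k(l-r)\}$ from the canonical bundles, and the $\{rk\}$, $\{r(l-r)\}$ twists in the definitions of $\sE$ and $\sF$ — must all conspire to cancel down to something quadratic in $r$, so I would want to carry the computation symbolically (writing $\dim Y(k,l) = k(m-k) + l(m-l) + kl$ from the $\G(l,m)$-bundle-over-$\G(k,m)$ description) and check the identity $\dim Y(k,l) - \dim Y(k+r,l-r)$ type differences explicitly. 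I would also need to be slightly careful about the $\C^\times$-equivariance: every isomorphism above must be made equivariantly, but since all the ingredients (Lemma \ref{lem:zcalc}, Lemma \ref{lem:cancalc}, and the base-change lemma) are stated equivariantly, this should go through without extra work. No genuinely new idea is required beyond Grothendieck duality for a closed embedding plus the canonical bundle computations already in hand.
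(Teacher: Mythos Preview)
Your approach is essentially identical to the paper's: compute $\sF^{(r)}(k,l)^\vee$ via Grothendieck duality along the closed embedding $W^r(k,l)\hookrightarrow Y(k,l)\times Y(k+r,l-r)$, tensor with the appropriate canonical bundle, then substitute the formulas of Lemma~\ref{lem:cancalc} together with the dimension identity $\dim W^r(k,l)-\dim Y(k+r,l-r)=\tfrac12\bigl(\dim Y(k,l)-\dim Y(k+r,l-r)\bigr)=r(k-l+r)$. One small slip in your bookkeeping sketch: from the $\G(l,m)$-bundle-over-$\G(k,m)$ description one gets $\dim Y(k,l)=k(m-k)+l(m-l)$, without the extra $+kl$ term.
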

\begin{proof}
We have
\begin{align*}
&\sF^{(r)}(k,l)_R = \sF^{(r)}(k,l)^\vee \otimes \pi_2^* \omega_{Y(k,l)} [\dim Y(k,l)] \\
&\cong \omega_{W^r(k,l)} \otimes \det(L_1'/L_1)^{k+r-l} \{-r(l-r)\} \otimes \pi_1^* \omega_{Y(k+r,l-r)}^\vee [\dim W^r(k,l) - \dim Y(k+r,l-r)] \\
&\cong \left( \det(L_2/L_0)^m \det(L_2/L_1')^{-r} \det(L_1/L_0)^r \det(L_1'/L_1)^{l-k-r} \{-2m(k+l)-2k(l-r)\} \right)|_{W^r(k,l)} \\ 
& \otimes \det(L_1'/L_1)^{k+r-l} \otimes (\det L_2/L_0)^{-m} \{2m(k+l)+2(k+r)(l-r)\} [r(k-l+r)] \{-r(l-r)\} \\
&\cong \sE^{(r)}(k,l)[r(k-l+r)]\{-r(k-l+r)\}.
\end{align*}
where we use that $ \dim W^r(k,l) - \dim Y(k+r,l-r) = \frac{1}{2}( \dim Y(k,l) - \dim Y(k+r, l-r) ) = r(k-l+r) $ and the expression for $\omega_{Y(k+r,l-r)}$ and $\omega_{W^r(k,l)}$ from Lemma \ref{lem:cancalc}. 
The second equality follows similarly.
\end{proof}

\subsection{Composition of $\E$s or $\F$s}

In this section we study the composition of $\F$s (or equivalently of $\E$s). We begin with the ``non-deformed'' version. 

\begin{Proposition}\label{prop:Ecomps1} 
We have
$$\H^*(\sF^{(r_2)}(k,l) * \sF^{(r_1)}(k-r_1,l+r_1)) \cong \sF^{(r_1+r_2)}(k-r_1,l+r_1) \otimes H^\star(\G(r_1,r_1+r_2))$$
and similarly if we replace the $\sF$s by $\sE$s. 
\end{Proposition}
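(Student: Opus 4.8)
The plan is to compute the composition $\sF^{(r_2)}(k,l) * \sF^{(r_1)}(k-r_1,l+r_1)$ by realizing it via a ``train'' of pushforwards and pullbacks, in the spirit of Proposition \ref{prop:train}, and then to apply the base-change Lemma \ref{lem:fibreproduct} at the one spot where a genuine fibre product appears. First I would set up the iterated flag variety
$$
Y(k-r_1, r_1, r_2, l-r_2) := \{ L_0 \xrightarrow{k-r_1} L_1 \xrightarrow{r_1} L_1' \xrightarrow{r_2} L_1'' \xrightarrow{l-r_2} L_2 : zL_i \subset L_{i-1} \}
$$
together with its two subvarieties $X_1, X_2$ cut out by the extra conditions $zL_1' \subset L_0$ (for the first $\sF^{(r_1)}$) and $zL_1'' \subset L_1$ (for the second $\sF^{(r_2)}$). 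By Proposition \ref{prop:train} each factor is given by such a train, and composing the two trains expresses $\sF^{(r_2)} * \sF^{(r_1)}$ as a sequence of maps through $Y(k-r_1, r_1, r_2, l-r_2)$, with the two ``bending'' conditions imposed at different stages. The only place the complex fails to be a sheaf a priori is where the pushforward off $X_1$ meets the pullback to a locus related to $X_2$; precisely here I would invoke Lemma \ref{lem:fibreproduct}, after checking its three hypotheses (flatness over images, lci images, expected-dimension intersection) using a transversality argument of exactly the type proved in Lemma \ref{lem:transverse1} — the two conditions $zL_1' \subset L_0$ and $zL_1'' \subset L_1$ correspond, under the diffeomorphism to a product of Grassmannians from \cite{ck2}, to two orthogonality conditions that meet transversely.

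Next I would identify the relevant fibre product. The scheme-theoretic intersection of the two ``bent'' loci inside $Y(k-r_1, r_1, r_2, l-r_2)$ should be the variety $\widetilde{W}$ of flags $L_0 \xrightarrow{k-r_1} L_1 \xrightarrow{r_1} L_1' \xrightarrow{r_2} L_1'' \xrightarrow{l-r_2} L_2$ with $zL_1' \subset L_0$, $zL_1'' \subset L_1$, hence also $zL_2 \subset L_1'$ etc., and the map $\widetilde{W} \to W^{r_1+r_2}(k-r_1, l+r_1)$ which forgets the intermediate subspace $L_1'$ realizes $\widetilde{W}$ as a $\G(r_1, r_1+r_2)$-bundle (forget $L_1'$: it is an $r_1$-plane sitting between the fixed $L_1$ and the fixed space $L_1''$, which has relative dimension $r_1$ over $L_1$). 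Lemma \ref{lem:fibreproduct} then gives that the cohomology of the kernel of the relevant $f_2^* f_{1*}$ is $p^*(\wedge^s N^\vee_{Y'/Y})$ for the appropriate normal bundle, and pushing this down along the $\G(r_1,r_1+r_2)$-bundle $\widetilde{W} \to W^{r_1+r_2}$ produces, after the projection formula and a Leray/Borel--Bott--type computation of the cohomology of the bundle, a factor of $H^\star(\G(r_1, r_1+r_2))$ tensored with the kernel $\sF^{(r_1+r_2)}(k-r_1, l+r_1)$.

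The remaining work is bookkeeping of line bundles and shifts: one must check that the twist by $\det(L_1''/L_1')^{\bullet}\{\bullet\}$ coming from the first $\sF^{(r_1)}$ and the twist $\det(L_2/L_1'')^{\bullet}\{\bullet\}$ coming from the second $\sF^{(r_2)}$, together with the $\wedge^s N^\vee$ contributions from Lemma \ref{lem:fibreproduct} and the canonical/relative bundles of the $\G(r_1,r_1+r_2)$-bundle, combine to exactly $\det(L_2/L_1)^{l+r_1 - (k-r_1) - (r_1+r_2)}\{(r_1+r_2)(l+r_1-(r_1+r_2))\}$ — i.e. the twist defining $\sF^{(r_1+r_2)}(k-r_1,l+r_1)$ — while the Grassmannian cohomology $H^\star(\G(r_1,r_1+r_2))$ appears with its correct bigrading (this is where the symmetric bigrading convention on $H^\star$ pays off: the $[s]$ shifts from $\wedge^s N^\vee$ and the $\{\cdot\}$ shifts from the $\C^\times$-weights assemble into the anti-diagonal bigrading). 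The $\sE$ case then follows either by the analogous train description for $\sE^{(r)}$ or formally by taking adjoints via Proposition \ref{prop:Eadj}, since adjunction turns a composition of $\sF$s into a composition of $\sE$s and $H^\star(\G(r_1,r_1+r_2))$ is symmetric under the required dualization. The main obstacle I anticipate is the transversality/expected-dimension verification needed to legitimately apply Lemma \ref{lem:fibreproduct} in the four-step flag variety — one must be careful that imposing the two bending conditions at \emph{different} stages of the train still yields an intersection of the expected dimension, which is why I would route the argument through the Grassmannian-product model of \cite{ck2} where both conditions become linear-algebraic orthogonality conditions amenable to an explicit local computation.
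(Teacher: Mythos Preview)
Your overall strategy --- reduce to a fibre product via the train description, apply Lemma~\ref{lem:fibreproduct}, and push forward along a $\G(r_1,r_1+r_2)$-bundle to $W^{r_1+r_2}(k-r_1,l+r_1)$ --- is exactly the paper's. But there is a concrete error in your identification of $\widetilde W$, and the step that actually produces $H^\star(\G(r_1,r_1+r_2))$ is left unspecified.

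The conditions on $\widetilde W$ are wrong. The fibre product of $W^{r_1}(k-r_1,l+r_1)$ and $W^{r_2}(k,l)$ over $Y(k,l)$ consists of flags $L_0\xrightarrow{k-r_1} L_1\xrightarrow{r_1} L_1'\xrightarrow{r_2} L_1''\xrightarrow{l-r_2} L_2$ with $zL_1''\subset L_0$ and $zL_2\subset L_1$, not $zL_1'\subset L_0$ and $zL_1''\subset L_1$. With your stated conditions, forgetting $L_1'$ does not even land in $W^{r_1+r_2}(k-r_1,l+r_1)$ (which requires $zL_1''\subset L_0$), and the fibres are not full Grassmannians because the residual condition $zL_1'\subset L_0$ is not automatic. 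With the corrected conditions the bundle claim holds and the condition on $L_1'$ becomes vacuous.

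The paper organises this more carefully. It composes the two three-step trains, forms the fibre product $B$ over $Y(k,l)$, and then subvarieties $A_1,A_2\subset B$. The crucial observation --- which your sketch does not isolate --- is that $A_1\cap A_2$ is \emph{not} of expected dimension in $B$; one must pass to the codimension-$r_1r_2$ locus $D\subset B$ cut out by $z\colon L_1''/L_1'\to L_1'/L_1\{2\}$, inside which it is. Lemma~\ref{lem:fibreproduct} then gives
\[
\H^{-s}(\sK)\;\cong\;\bigl.\wedge^s\bigl((L_1''/L_1')\otimes (L_1'/L_1)^\vee\{-2\}\bigr)\bigr|_{A_1\cap A_2}.
\]
The step you label ``Borel--Bott--type'' is the identification of this bundle, times the twisting line bundles from the two $\sF$'s, with $\Omega_\pi^{\,r_1r_2-s}\otimes\pi^*(\text{line bundle})$, where $\pi$ is the Grassmannian fibration; the pushforward then yields the Hodge cohomology of $\G(r_1,r_1+r_2)$, term by term. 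This recognition of $N_{D/B}^\vee$ as dual to the relative tangent bundle of $\pi$ is the engine of the argument, not just bookkeeping, and should be made explicit. The $\sE$ case indeed follows by adjunction, as you say.
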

\begin{proof}

We will work at the level of functors keeping in mind that the corresponding statements hold at the level of kernels. By Proposition \ref{prop:train}, the composition $\F^{(r_2)}(k,l) \circ \F^{(r_1)}(k-r_1,l+r_1)$ is given by the following sequence of maps
\begin{equation}\label{eq:diagram1}
\xymatrix{
X(k-r_1,r_1,l)_2 \ar[r]^{i_1} \ar[d]_{q_1} &Y(k-r_1,r_1,l)  &X(k-r_1,r_1,l)_1 \ar[l]_{i_2} \ar[dr]_{q_2} \\ 
Y(k-r_1,l+r_1) & & & Y(k,l) 
}
\end{equation}
\begin{equation*} 
\xymatrix{
& X(k,r_2,l-r_2)_2 \ar[r]^{i_1'} \ar[dl]_{q_1'} &Y(k,r_2,l-r_2) &X(k,r_2,l-r_2)_1 \ar[l]_{i_2'} \ar[d]_{q_2'} \\
Y(k,l) & & & Y(k+r_2, l-r_2).
}
\end{equation*} 
Here $Y(k-r_1,r_1,l)$ and $Y(k,r_2,l-r_2)$ carry the line bundles 
$$\det(L_2/L_1)^{l-k+r_1}\{ r_1l \} \text{ and } \det(L_2/L_1)^{l-k-r_2} \{r_2(l-r_2)\}.$$ 
As in the case of Proposition \ref{prop:train}, more is true- the convolution 
$\sF^{(r_2)}(k,l) * \sF^{(r_1)}(k-r_1,l+r_1)$ is actually equal to the convolution of the kernels given by the sequence of maps above- though we prefer notationally to work with the maps instead of with the kernels.          

Using the maps $q_2$ and $q_1'$ above, we construct the fibre product 
\begin{eqnarray*}
B &:=& X(k-r_1,r_1,l)_1 \times_{Y(k,l)} X(k,r_2,l-r_2)_2 \\
&\cong& \{L_0 \xrightarrow{k-r_1} L_1 \xrightarrow{r_1} L_2 \xrightarrow{r_2} L_3 \xrightarrow{l-r_2} L_4: zL_2 \subset L_0 \text{ and } zL_4 \subset L_2\}
\end{eqnarray*}
We also have fibre products
\begin{eqnarray*}
A_1 &:=& X(k-r_1,r_1,l)_2 \times_{Y(k-r_1,r_1,l)} B \\
&\cong& \{L_0 \xrightarrow{k-r_1} L_1 \xrightarrow{r_1} L_2 \xrightarrow{r_2} L_3 \xrightarrow{l-r_2} L_4: zL_2 \subset L_0 \text{ and } zL_4 \subset L_1\}
\end{eqnarray*}
and
\begin{eqnarray*}
A_2 &:=& B \times_{Y(k,r_2,l-r_2)} X(k,r_2,l-r_2)_1 \\
&\cong& \{L_0 \xrightarrow{k-r_1} L_1 \xrightarrow{r_1} L_2 \xrightarrow{r_2} L_3 \xrightarrow{l-r_2} L_4: zL_3 \subset L_0 \text{ and } zL_4 \subset L_2\}.
\end{eqnarray*}
By Lemma \ref{lem:transverse1} $X(k-r_1,r_1,l)_2$ and $X(k-r_1,r_1,l)_1$ are smooth and intersect in the right dimension inside $Y(k-r_1,r_1,l)$ (similarly for $X(k,r_2,l-r_2)_2$ and $X(k,r_2,l-r_2)_1$ inside $Y(k,r_2,l-r_2)$.) Since $q_2$ and $q_1'$ are flat we can use Lemma \ref{lem:fibreproduct} to replace diagram \ref{eq:diagram1} above by the diagram
\begin{equation*}
\xymatrix{
A_1 \ar[r]^{i''_1} \ar[d]_{\tilde{q}_1} &B &A_2 \ar[l]_{i''_2} \ar[d]_{\tilde{q}_2} \\
X(k-r_1,r_1,l)_2 \ar[d]_{q_1} & & X(k,r_2,l-r_2)_1 \ar[d]_{q_2'} \\
Y(k-r_1,l+r_1) & & Y(k+r_2,l-r_2)
}
\end{equation*}
Here $B$ is equipped with the product of the pullback of the line bundles from $Y(k-r_1,r_1,l)$ and $Y(k,r_2,l-r_2)$, which is the line bundle $\det(L_2/L_1)^{l-k+r_1} \otimes \det(L_3/L_2)^{l-k-r_2} \{r_1l+r_2(l-r_2)\}$.  

A straightforward calculation shows that $\dim(B) = \dim Y(k,l) + r_1(k-r_1) + r_2(l-r_2)$. Calculating the dimension of $A_1$ we find that $i''_1$ is a codimension $r_1l$ embedding while $i''_2$ is codimension $r_2k$. On the other hand, the intersection
$$A_1 \cap A_2 = \{L_0 \xrightarrow{k-r_1} L_1 \xrightarrow{r_1} L_2 \xrightarrow{r_2} L_3 \xrightarrow{l-r_2} L_4: zL_3 \subset L_0 \text{ and } zL_4 \subset L_1\}$$
has codimension $r_1l+r_2k+r_1r_2$ inside $B$. Thus $A_1$ and $A_2$ meet in the expected dimension inside
$$D = \{L_\cdot: zL_2 \subset L_0 \text{ and } zL_4 \subset L_2 \text{ and } zL_3 \subset L_2\} \subset B,$$
which has codimension $r_1r_2$. $D$ is carved out inside $B$ by the section $z:L_3/L_2 \rightarrow L_2/L_1 \{2\}$. 

It follows from Lemma \ref{lem:fibreproduct} that the kernel for ${i''_2}^* \circ {i''_1}_*$ has cohomology in degree $-s$ given by 
$$\wedge^s (L_3/L_2 \otimes (L_2/L_1)^\vee \{-2\})|_{A_1 \cap A_2}.$$ 
In other words, we have simplified $\sF^{(r_2)}(k,l) * \sF^{(r_1)}(k-r_1,l+r_1)$ to a kernel $\sK \in D(A_1 \times A_2)$ whose cohomology is
$$\H^{-s}(\sK) \cong \wedge^s (L_3/L_2 \otimes (L_2/L_1)^\vee \{-2\}) \otimes \det(L_2/L_1)^{l-k+r_1} \otimes \det(L_3/L_2)^{l-k-r_2} \{r_1l+r_2(l-r_2)\} |_{A_1 \cap A_2}.$$
(The corresponding functor is $(\cdot) \mapsto p_{2*}(p_1^*(\cdot) \otimes \sK)$ where $p_1$ and $p_2$ are the two projections from $A_1 \times A_2$ to $Y(k-r_1,l+r_1)$ and $Y(k+r_2,l-r_2)$ respectively.)

Next we should calculate 
$${\pi_2}_*(\pi_1^*(\cdot) \otimes \wedge^s  (L_3/L_2 \otimes (L_2/L_1)^\vee \{-2\}) \otimes \det(L_2/L_1)^{l-k+r_1} \otimes \det(L_3/L_2)^{l-k-r_2} \{r_1l+r_2(l-r_2)\} )$$ 
where $\pi_1$ and $\pi_2$ are the two natural morphisms from $A_1 \cap A_2$ to $Y(k-r_1,l+r_1)$ and $Y(k+r_2,l-r_2)$ respectively.

To do this notice that both $\pi_1$ and $\pi_2$ factor through the map 
\begin{equation}\label{eq:grassfibre}
\pi: A_1 \cap A_2 \rightarrow \{L_0 \xrightarrow{k-r_1} L_1 \xrightarrow{r_1+r_2} L_3 \xrightarrow{l-r_2} L_4: zL_3 \subset L_0, \ zL_4 \subset L_1\} = W^{r_1+r_2}(k-r_1,l+r_1)
\end{equation}
which forgets $L_2$. The map $\pi$ is a $\G(r_1,r_1+r_2)$ Grassmannian bundle with relative cotangent bundle $\Omega^1_{\pi} = (L_2/L_1) \otimes (L_3/L_2)^\vee$.  Thus
\begin{eqnarray*}
\wedge^s (L_3/L_2 &\otimes& (L_2/L_1)^\vee \{-2\}) \otimes \det(L_2/L_1)^{l-k+r_1} \otimes \det(L_3/L_2)^{l-k-r_2}) \{r_1l+r_2(l-r_2)\} \\
&\cong& \wedge^s  (L_3/L_2 \otimes (L_2/L_1)^\vee) \{-2s\} \otimes \omega_{\pi} \otimes \det(L_3/L_1)^{l-k+r_1-r_2} \{r_1l+r_2(l-r_2)\} \\
&\cong& \wedge^{r_1r_2-s} \Omega^1_{\pi} \otimes \det(L_3/L_1)^{l-k+r_1-r_2} \{-2s+r_1l+r_2(l-r_2)\}.
\end{eqnarray*}
Therefore
\begin{eqnarray*}
&& \pi_* \left( \wedge^s (L_3/L_2 \otimes (L_2/L_1)^\vee \{-2\})[s] \otimes \det(L_2/L_1)^{l-k+r_1} \otimes \det(L_3/L_2)^{l-k-r_2} \{r_1l+r_2(l-r_2)\} \right) \\ 
&\cong& \pi_*(\Omega^{r_1r_2-s}_{\pi}[s] \otimes \det(L_3/L_1)^{l-k+r_1-r_2}) \{-2s+r_1l+r_2(l-r_2)\} \\
&\cong& \O_{W^{r_1+r_2}(k-r_1,l+r_1)} \det(L_3/L_1)^{l-k+r_1-r_2} \{(r_1+r_2)(l-r_2)\} \\
& & \otimes_\C H^{2r_1r_2-2s}(\G(r_1,r_1+r_2))[-r_1r_2+2s]\{r_1r_2-2s\} \\
&\cong& \sF^{(r_1+r_2)}(k-r_1,l+r_1) \otimes_\C H^{r_1r_2-s}(\G(r_1,r_1+r_2))[-r_1r_2+2s]\{r_1r_2-2s\}
\end{eqnarray*}
where to obtain the second isomorphism we used that $\oplus_s H^s(\Omega^{s})$ contains all the cohomology of a Grassmannian.

It follows from the above computation that the spectral sequence which computes $\H^*(\pi_* \sK)$ from the cohomology of $\H^*(\sK)$ degenerates at the $E_2$ term and hence 
\begin{eqnarray*}
\H^*(\pi_* \sK) &\cong& \bigoplus_s \sF^{(r_1+r_2)}(k-r_1,l+r_1) \otimes_\C H^{2r_1r_2-2s}(\G(r_1,r_1+r_2))[-r_1r_2+2s]\{r_1r_2-2s\} \\
&\cong& \sF^{(r_1+r_2)}(k-r_1,l+r_1) \otimes_\C H^\star(\G(r_1,r_1+r_2)).
\end{eqnarray*}
This proves the statement for the $\sF$s. 
The analogous result for $\sE$s instead of $\sF$s follows by adjunction. 
\end{proof}

\begin{Remark}
Notice that we only needed to prove Proposition \ref{prop:Ecomps1} in the case $r_1=r_2=1$ (the general case then follows from the formal argument in \cite{ckl1}). However, we proved the general statement since it was no more difficult and somewhat instructive. In particular, it is interesting to see where the cohomology of the Grassmannian shows up geometrically -- namely, via the fibration described in equation (\ref{eq:grassfibre}). 
\end{Remark}

Next we must perform a calculation analogous to that of Proposition \ref{prop:Ecomps1} for the deformed varieties $\tY(k,l)$. The argument is quite similar to that in the proof of \ref{prop:Ecomps1}, but we include it for completeness. Again, we only need to check the case $r=1$ but we cover the more general situation since it is instructive and no more difficult. 

\begin{Proposition}\label{prop:Ecomps2} 
Consider the inclusions 
$$j_{12}: Y(k-r,l+r) \times Y(k,l) \rightarrow Y(k-r,l+r) \times \tY(k,l)$$
$$j_{23}: Y(k,l) \times Y(k+1,l-1) \rightarrow \tY(k,l) \times Y(k+1,l-1).$$
 Then we have
$$\H^*(j_{23*} \sF(k,l) * j_{12*} \sF^{(r)}(k-r,l+r)) \cong \sF^{(r+1)}(k-r,l+r) \otimes_\C (\C[-r]\{r\} \oplus \C[r+1]\{-r-2\})$$
and similarly if we replace $\sF$s by $\sE$s. 
\end{Proposition}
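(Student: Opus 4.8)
The plan is to closely follow the proof of Proposition \ref{prop:Ecomps1}, but carried out over the deformations.

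First I would upgrade the train description of Proposition \ref{prop:train} to the deformed setting: the pushforwards $j_{12*}\sF^{(r)}(k-r,l+r)$ and $j_{23*}\sF(k,l)$ are each induced by a zig--zag of maps through a \emph{deformed} three-step flag variety -- a flat one-parameter deformation of the varieties $Y(a,b,c)$ of section \ref{sec:prems}, obtained by replacing $z$ by $z-x$ precisely in the incidence that is deformed in $\tY(k,l)$ -- with one leg landing in the undeformed endpoint and the other landing in $\tY(k,l)$. As in Lemma \ref{lem:transverse1}, one checks that the relevant subvarieties of these deformed three-step flags are still smooth and meet in the expected dimension, so Lemma \ref{lem:fibreproduct} applies and the kernel of each push--pull functor is again an exterior power of a conormal bundle, now on a deformed correspondence.

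Second I would compose the two deformed trains and form the fibre products, exactly mirroring the varieties $B$, $A_1$, $A_2$, $D$ of Proposition \ref{prop:Ecomps1}, but with the middle fibre product taken over $\tY(k,l)$ rather than over $Y(k,l)$; call these $\tB$, $\tilde A_1$, $\tilde A_2$, $\tilde D$. They all carry the parameter $x$ and restrict over $x=0$ to $B$, $A_1$, $A_2$, $D$. The essential new feature is how the Grassmannian bundle \eqref{eq:grassfibre} deforms. Over $x=0$ the projection $\tilde A_1 \cap \tilde A_2 \to \tW^{r+1}(k-r,l+r)$ that forgets the middle lattice is the $\G(r,r+1)$-bundle $\pi$ of Proposition \ref{prop:Ecomps1}; but over $x \ne 0$ the parameter $x$ separates the $z$-eigenvalues $0$ and $x$, forcing the middle lattice to avoid a hyperplane, so the fibre becomes an affine space $\bA^r$. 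Hence the derived pushforward of $\O$ along this map is no longer $\O \otimes H^\star(\G(r,r+1)) = \O \otimes H^\star(\p^r)$ but picks up only the two ``extreme'' contributions: the degree-zero class (from $\wedge^0$) and the class from the top exterior power $\wedge^r$ of the relative cotangent bundle (the relative canonical). After the conormal-bundle twist supplied by Lemma \ref{lem:fibreproduct} and after accounting for the weight $2$ of $x$, these become exactly the two summands $\C[-r]\{r\}$ and $\C[r+1]\{-r-2\}$.

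Third I would apply Lemma \ref{lem:fibreproduct} to $\tilde A_1, \tilde A_2 \subset \tB$, combine with the canonical-bundle computations of section \ref{sec:basiccalcs}, and bookkeep the $[\cdot]$ and $\{\cdot\}$ shifts; the degree-zero piece is identified with $\sF^{(r+1)}(k-r,l+r)$ by restricting the whole computation to $x=0$ and comparing with Proposition \ref{prop:Ecomps1}, while the spectral sequence computing $\H^*$ from the cohomology sheaves of the kernel degenerates at $E_2$ for the same reason as there. The statement for $\sE$ then follows by taking adjoints, exactly as in Proposition \ref{prop:Ecomps1}. The hard part will be the second step: setting up the deformed three-step flag varieties correctly and, above all, justifying the claim that passing from $Y(k,l)$ to $\tY(k,l)$ replaces the $\G(r,r+1)$-fibration by an $\bA^r$-fibration over the generic locus -- equivalently, that the section cutting $\tilde D$ out of $\tB$ becomes ``generic'' as soon as $x \ne 0$. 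This is a local computation with the deformed incidence conditions $(z-x)L_i \subset L_{i-1}$, together with careful tracking of $\C^\times$-weights to pin down the $\{\cdot\}$-shifts.
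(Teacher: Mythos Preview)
Your overall strategy --- parallel Proposition \ref{prop:Ecomps1} --- is right, and the final step (adjunction for $\sE$) is fine. But the mechanism you propose in the second step is not what actually happens, and I do not think it would work as written.

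The varieties $B$, $A_1$, $A_2$, $A_1\cap A_2$ do \emph{not} deform: both legs $X(k-r,r,l)_1$ and $X(k,1,l-1)_2$ factor through $Y(k,l)\hookrightarrow\tY(k,l)$, so their fibre product over $\tY(k,l)$ is the same $B$ as before and is supported at $x=0$. In particular the $\p^r$-bundle $\pi$ of \eqref{eq:grassfibre} is unchanged; there is no locus ``over $x\ne 0$'' where the fibre becomes $\bA^r$. (Even if there were, an $\bA^r$-bundle would produce one summand, not two.)

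What does change is the base-change step. The compositions $j_2\circ q_2$ and $j_2\circ q_1'$ are not flat, so naive base change fails; instead one shows
\[
q_1'^*\,j_2^*\,j_{2*}\,q_{2*}\;=\;p_{2*}\,j'^*\,j'_*\,p_1^*,
\]
where $j':B\hookrightarrow \tB$ is the codimension-one inclusion into the one-parameter thickening of $B$ obtained by allowing $(z-x)L_2\subset L_0$. Thus the ``extra $x$-direction'' enters only as an enlargement of the ambient space in which one runs Lemma \ref{lem:fibreproduct}. Inside $\tB$ the subvariety $D$ now has codimension $r+1$ (not $r$), cut out by $z:L_3/L_2\to L_3/L_1\{2\}$ rather than $z:L_3/L_2\to L_2/L_1\{2\}$; so the conormal bundle has rank $r+1$ and the kernel has cohomology sheaves $\wedge^s\bigl(L_3/L_2\otimes(L_3/L_1)^\vee\{-2\}\bigr)$ for $s=0,\dots,r+1$.

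The two surviving summands then arise from the same $\p^r$-bundle pushforward as before: after the line-bundle twists one is pushing $\det(L_3/L_2)^{\,s-r-1}$ along $\pi$, which restricts to $\O_{\p^r}(s-r-1)$ on fibres. This vanishes for $s=1,\dots,r$, gives $\omega_\pi$ (hence a $[-r]$ shift) for $s=0$, and gives $\O$ for $s=r+1$. That is the source of $\C[-r]\{r\}\oplus\C[r+1]\{-r-2\}$. So the crucial point you should work out is the identity for $j_2^*j_{2*}$ above and the resulting rank jump of the conormal bundle, not a deformation of the Grassmannian fibration.
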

\begin{proof}
We imitate the proof of Proposition \ref{prop:Ecomps1} in the case $(r_1,r_2) = (r,1)$.
Using Proposition \ref{prop:train} the functor corresponding to $j_{23*} \sF(k,l) * j_{12*} \sF(k-r,l+r)$ is the composition of functors induced by the following sequence of maps 
\begin{equation*}
\xymatrix{
X(k-r,r,l)_2 \ar[r]^{i_1} \ar[d]_{q_1} &Y(k-r,r,l)  &X(k-r,r,l)_1 \ar[l]_{i_2} \ar[d]_{q_2} & \\ 
Y(k-r,l+r) & & Y(k,l) \ar[r]^{j_2} & \tY(k,l)
}
\end{equation*}
\begin{equation*} 
\xymatrix{
&X(k,1,l-1)_2 \ar[r]^{i_1'} \ar[d]_{q_1'} &Y(k,1,l-1) & X(k,1,l-1)_1 \ar[l]_{i_2'} \ar[d]_{q_2'} \\
\tY(k,l) & Y(k,l) \ar[l]_{j_2} & & Y(k+1, l-1).
}
\end{equation*} 
In the above diagram, $Y(k-r,r,l)$ and $Y(k,1,l-1)$ carry the line bundles 
$\det(L_2/L_1)^{l-k+r}$ and $\det(L_2/L_1)^{l-k-1}$. 

We construct the fibre product 
\begin{eqnarray*}
B &:=& X(k-r,r,l)_1 \times_{\tY(k,l)} X(k,1,l-1)_2 \\
&\cong& \{L_0 \xrightarrow{k-r} L_1 \xrightarrow{r} L_2 \xrightarrow{1} L_3 \xrightarrow{l-1} L_4: zL_2 \subset L_0 \text{ and } zL_4 \subset L_2\}
\end{eqnarray*}
with respect to $j_2 \circ q_2$ and $j_2 \circ q_1'$. Unfortunately, unlike in the non-deformed setting, $q_1'^* \circ j_2^* \circ j_{2*} \circ q_{2*} \ne p_{2*} \circ p_1^*$ (where $p_i$ are the projections from $B$ to $X(k-r,r,l)_1$ and $X(k,1,l-1)_2$) because neither $j_2 \circ q_2$ nor $j_2 \circ q_1'$ is flat. 

Instead, we claim that
$$q_1'^* \circ j_2^* \circ j_{2*} \circ q_{2*} = p_{2*} \circ j'^* \circ j'_* \circ p_1^*$$
where $j'$ is the codimension one inclusion of $B$ into 
\begin{eqnarray*}
\tB := \{L_0 \xrightarrow{k-r} L_1 \xrightarrow{r} L_2 \xrightarrow{1} L_3 \xrightarrow{l-1} L_4; x \in \C: \\
(z-x)L_2 \subset L_0 \text{ and } zL_4 \subset L_2\}.
\end{eqnarray*}
To see this we use the fibre diagram
\begin{equation*}
\xymatrix{
X(k-r,r,l)_1 \ar[r]^{\tilde{j}_2} \ar[d]_{q_2} & \tilde{X}(k-r,r,l)_1 \ar[d]_{\tilde{q}_2} \\
Y(k,l) \ar[r]^{j_2} & \tY(k,l)
}
\end{equation*}
where 
\begin{eqnarray*}
\tilde{X}(k-r,r,l)_1 := \{L_0 \xrightarrow{k-r} L_1 \xrightarrow{r} L_2 \xrightarrow{l} L_3; x \in \C; \\
(z-x)L_2 \subset L_0 \text{ and } zL_3 \subset L_2 \}.
\end{eqnarray*}
Then 
$$j_2^* \circ j_{2*} \circ q_{2*} = j_2^* \circ \tilde{q}_{2*} \circ \tilde{j}_{2*} = q_{2*} \circ \tilde{j}_2^* \circ \tilde{j}_{2*}$$
where the second equality is by the base change Lemma \ref{lem:fibreproduct}. Now we repeat again with the fibre diagram
\begin{equation*}
\xymatrix{
B \ar[r]^{j'} \ar[d]_{p_1} & \tB \ar[d]_{\tilde{p}_1} \\
X(k-r,r,l)_1 \ar[r]^{\tilde{j}_2} & \tilde{X}(k-r,r,l)_1
}
\end{equation*}
to get that $q_{2*} \circ \tilde{j}_2^* \circ \tilde{j}_{2*} = j'^* \circ j'_* \circ p_1^*$, which proves the above claim. 

Next we consider the fibre products
\begin{eqnarray*}
A_1 &:=& X(k-r,r,l)_2 \times_{Y(k-r,r,l)} B 
\end{eqnarray*}
and
\begin{eqnarray*}
A_2 &:=& B \times_{Y(k,1,l-1)} X(k,1,l-1)_1
\end{eqnarray*}
The images of $B$ and $X(k-r,r,l)_2$ inside $Y(k-r,r,l)$ meet in the expected dimension, as do the images of 
$B$ and $X(k,1,l-1)_1$ inside $Y(k,1,l-1)$. So we may use Lemma \ref{lem:fibreproduct} to reduce to the kernel corresponding to the composition of maps in the following diagram
\begin{equation*}
\xymatrix{
A_1 \ar[r]^{i''_1} \ar[d]_{\tilde{q}_1} & \tB &A_2 \ar[l]_{i''_2} \ar[d]_{\tilde{q}_2} \\
X(k-r,r,l)_2 \ar[d]_{q_1} & & X(k,1,l-1)_1 \ar[d]_{q_2'} \\
Y(k-r,l+r) & & Y(k+1,l-1).
}
\end{equation*}
Here $\tB$ is equipped with the line bundles $\det(L_2/L_1)^{l-k+r} \otimes \det(L_3/L_2)^{l-k-1} \{rl+l-1\})$.

As in the proof of Proposition \ref{prop:Ecomps1}, $A_1$ and $A_2$ meet in the right dimension inside  
$$D = \{L_\cdot: zL_2 \subset L_0 \text{ and } zL_4 \subset L_2 \text{ and } zL_3 \subset L_2\} \subset \tB$$
which has codimension $r+1$. Now, however, $D \subset \tB$ (instead of $D \subset B$) is cut out by the section $z:L_3/L_2 \rightarrow L_3/L_1 \{2\}$ (instead of $z: L_3/L_2 \rightarrow L_2/L_1 \{2\}$). 

By Lemma \ref{lem:fibreproduct} this means that the kernel for $i_2''^* \circ {i_1''}_*$ has cohomology in degree $-s$ given by 
$$\wedge^s (L_3/L_2 \otimes (L_3/L_1)^\vee \{-2\})|_{A_1 \cap A_2}.$$
Thus we have simplified $j_{23*} \sF(k,l) * j_{12*} \sF^{(r)}(k-r,l+r)$ to a kernel $\sK \in D(A_1 \times A_2)$ whose cohomology is 
$$\H^{-s}(\sK) \cong \left( \wedge^s(L_3/L_2 \otimes (L_3/L_1)^\vee \{-2\}) \otimes \det(L_2/L_1)^{l-k+r} \otimes \det(L_3/L_2)^{l-k-1} \{rl+l-1\}\right)|_{A_1 \cap A_2}.$$ 
(The corresponding functor is $(\cdot) \mapsto p_{2*}(p_1^*(\cdot) \otimes \sK)$ where $p_1$ and $p_2$ are the two projections from $A_1 \times A_2$ to $Y(k-r,l+r)$ and $Y(k+1,l-1)$ respectively.)

Now, just as in the proof of Proposition \ref{prop:Ecomps1}, we have the map
$$\pi: A_1 \cap A_2 \rightarrow \{L_0 \xrightarrow{k-r} L_1 \xrightarrow{r+1} L_3 \xrightarrow{l-1} L_4: zL_3 \subset L_0 \text{ and } zL_4 \subset L_1\} = W^{r+1}(k-r,l+r)$$
which forgets $L_2$.  $\pi$ is a $\p^r$ bundle and $\det(L_3/L_2)$ restricts to $\O_{\p^r}(1)$ on its fibers. Also
\begin{eqnarray*}
\wedge^s (L_3/L_2 \otimes (L_3/L_1)^\vee \{-2\}) \otimes \det(L_2/L_1)^{l-k+r} \otimes \det(L_3/L_2)^{l-k-1} \{rl+l-1\} \\
\cong \wedge^s (L_3/L_1)^\vee \otimes \det(L_3/L_2)^{s-r-1} \otimes \det(L_3/L_1)^{l-k+r} \{rl+l-1-2s\}.
\end{eqnarray*}
For $s = 1, \dots, r$, $\pi_* \det(L_3/L_2)^{s-r-1} = 0$, while for $s=0$ we get 
\begin{eqnarray*}
& & \pi_*(\det(L_3/L_2)^{-r-1} \otimes \det(L_3/L_1)^{l-k+r} \{rl+l-1\}) \\
&\cong& \pi_*(\omega_{\pi} \otimes \det(L_3/L_1)^{l-k+r-1} \{rl+l-1\}) \\
&\cong& \O_{W^{r+1}(k-r,l+r)} \det(L_3/L_1)^{l-k+r-1}[-r]\{rl+l-1\} \\
&\cong& \sF^{(r+1)}(k-r,l+r)[-r]\{r\},
\end{eqnarray*}
where we used that $\omega_\pi = \det((L_2/L_1) \otimes (L_3/L_2)^\vee) \cong \det(L_2/L_1) \otimes \det(L_3/L_2)^{-r}$. 
For $s=r+1$ we get
\begin{eqnarray*}
& & \pi_*(\det(L_3/L_1)^\vee \otimes \det(L_3/L_1)^{l-k+r}[r+1]\{rl+l-1-2(r+1)\}) \\
&\cong& \O_{W^{r+1}(k-r,l+r)} \det(L_3/L_1)^{l-k+r-1}\{(r+1)(l-1)\}[r+1]\{-r-2\} \\
&\cong& \sF^{(r+1)}(k-r,l+r)[r+1]\{-r-2\}.
\end{eqnarray*} 

We conclude that the spectral sequence which computes $\H^*(\pi_* \sK)$ from the cohomology of $\H^*(\sK)$ degenerates at the $E_2$ term and hence
$$\H^*(\pi_* \sK) \cong \sF^{(r+1)}(k-r,l+r) \otimes_\C (\C[-r]\{r\} \oplus \C[r+1]\{-r-2\}).$$
The result follows. 
The analogous result for $\sE$s instead of $\sF$s follows by adjunction. 
\end{proof}

\subsection{The commutator relation}

In this section we categorify the commutator relation. Roughly speaking, this says that $\E \circ \F = \F \circ \E \oplus \id \otimes H^\star(\p^\l)$. 

\begin{Proposition}\label{prop:comm1}
If $k \ge l$ then 
\begin{equation*}
\sF(k-1,l+1) * \sE(k-1,l+1) \cong \sE(k,l) * \sF(k,l) \oplus \sP
\end{equation*}
where $\H^*(\sP) \cong \O_\Delta \otimes_\C H^\star(\p^{k-l-1})$ and $\Delta \not\subset \mbox{supp}(\sE(k,l) * \sF(k,l))$.

Similarly, if $ k \le l $ then 
\begin{equation*}
\sE(k,l) * \sF(k,l) \cong \sF(k-1,l+1) * \sE(k-1,l+1) \oplus \sP'
\end{equation*}
where $\H^*(\sP') \cong \O_\Delta \otimes_\C H^\star(\p^{l-k-1})$. Moreover $\Delta \not\subset \mbox{supp}(\sF(k-1,l+1) * \sE(k-1,l+1))$.
\end{Proposition}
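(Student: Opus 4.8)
The plan is to follow the template already used in Propositions~\ref{prop:Ecomps1} and \ref{prop:Ecomps2}. Using the train description of Proposition~\ref{prop:train}, write the two composites $\F(k-1,l+1)\circ\E(k-1,l+1)$ and $\E(k,l)\circ\F(k,l)$ as sequences of pull-backs and push-forwards, form the relevant fibre products --- over $Y(k-1,l+1)$ for the first and over $Y(k,l)$ for the second --- and invoke the base-change Lemma~\ref{lem:fibreproduct}, with transversality coming from Lemma~\ref{lem:transverse1}, to present each composite as a Fourier--Mukai kernel (with an explicit determinantal twist) supported on a correspondence inside $Y(k,l)\times Y(k,l)$. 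The first composite is realised on a variety of the form
\[
Z = \{\, L_0 \subset M \subset A_1,\, B_1 \subset L_2 \;:\; \dim(A_1/M)=\dim(B_1/M)=1,\ zL_2 \subset M \,\},
\]
with the two maps to $Y(k,l)$ remembering $(L_0\subset A_1\subset L_2)$ and $(L_0\subset B_1\subset L_2)$, and the second on the analogous
\[
Z' = \{\, L_0 \subset A_1,\, B_1 \subset L_1' \subset L_2 \;:\; \dim(L_1'/A_1)=\dim(L_1'/B_1)=1,\ zL_1' \subset L_0 \,\}.
\]
Over the open locus $A_1\neq B_1$ one has $M = A_1\cap B_1$ on $Z$ and $L_1' = A_1 + B_1$ on $Z'$, and a direct comparison of the two correspondences and their twisting line bundles identifies the two kernels there. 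Hence the restrictions of $\sF(k-1,l+1)*\sE(k-1,l+1)$ and $\sE(k,l)*\sF(k,l)$ to $Y(k,l)^2 \setminus \Delta$ agree, so the ``error term'' $\sP$ (resp.\ $\sP'$) must be supported on $\Delta$.

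It then remains to compare the two kernels in a formal neighbourhood of $\Delta$, which I would do exactly as in the proof of Proposition~\ref{prop:Ecomps2}: near $\Delta$ the fibre product is not of the expected dimension, so Lemma~\ref{lem:fibreproduct} presents the relevant kernel, inside an enlarged smooth ambient variety, with cohomology sheaves given by exterior powers $\wedge^s$ of a conormal bundle; one then pushes this forward along the fibration over $\Delta$ whose fibres are the projectivisations of $\ker(\bar z\colon L_2/L_1 \to L_1/L_0)$, which are generically $\p^{|k-l|-1}$ (cf.\ Lemma~\ref{lem;projcalc}). After the twist dictated by the canonical-bundle computations of Lemmas~\ref{lem:zcalc} and \ref{lem:cancalc}, the push-forward of this Koszul-type complex is $\O_\Delta \otimes_\C H^\star(\p^{|k-l|-1})$, the spectral sequence computing it degenerating at $E_2$ just as in Proposition~\ref{prop:Ecomps1} (where $\bigoplus_s H^s(\Omega^s)$ of a projective space supplied all of its cohomology). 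The sign of $k-l$ decides which composite is the larger: for $k\ge l$ the fibre of $Z$ over $\Delta$ is nonempty of dimension $k-l-1$, so $\sF(k-1,l+1)*\sE(k-1,l+1)$ acquires the extra summand, and for $k\le l$ the roles of $Z$ and $Z'$ are exchanged. To upgrade the resulting distinguished triangle to a genuine direct-sum decomposition I would use that all of $\sE(k,l), \sF(k,l)$ and their convolutions here are sheaves, together with the self-duality in the adjunctions of Proposition~\ref{prop:Eadj}, which forces the connecting $\Ext^{-1}$ to vanish and the triangle to split.

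Finally, the clause $\Delta\not\subset\supp(\sE(k,l)*\sF(k,l))$ for $k\ge l$ follows from the description above: the support of $\sE(k,l)*\sF(k,l)$ is the image of $Z'$ in $Y(k,l)^2$, and its projection to the first factor is contained in $\pi_1(W^1(k,l))\subset Y(k,l)$, namely the locus where $\bar z\colon L_2/L_1\to L_1/L_0$ is not injective; for $k\ge l$ this is a proper closed (determinantal) subvariety of $Y(k,l)$, so the diagonal $\Delta\cong Y(k,l)$, which maps isomorphically onto the first factor, cannot lie in the support. The statement for $k\le l$ is the mirror image, with $W^1(k,l)$ and the other projection playing the symmetric roles.

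The hard part will be the local computation near $\Delta$: one must correctly identify the excess conormal bundle of the non-transverse fibre product and keep precise track of all the determinantal and $\{\cdot\}$-twists produced by the $\sE^{(r)}$, $\sF^{(r)}$ definitions, so that the push-forward along the $\p^{|k-l|-1}$-fibration comes out as $\O_\Delta\otimes H^\star(\p^{|k-l|-1})$ exactly (rather than a twist of it), and --- relatedly --- to produce the comparison morphism between the two kernels (built from the adjunction unit and counit) whose cone is precisely this and nothing larger.
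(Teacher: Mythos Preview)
Your overall framework --- train description, fibre products, Lemma~\ref{lem:fibreproduct} --- is the right one and matches the paper.  But the plan ``do exactly as in the proof of Proposition~\ref{prop:Ecomps2}'' at the crucial step hides a genuine obstruction, and this is where your proposal diverges from what the paper actually does.

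When you form the fibre products $A_1, A_2$ inside $B$ (your $Z'$), the intersection $A_1\cap A_2$ has \emph{two irreducible components of different dimensions}: the locus $L_1=L_1'$ and the closure of the locus $L_1\neq L_1'$.  As the paper says explicitly, this means $A_1\cap A_2$ cannot be of expected dimension inside \emph{any} smooth subvariety of $B$, so Lemma~\ref{lem:fibreproduct} does not apply.  The paper's fix is not just to enlarge the ambient variety but to introduce a genuinely new flag variable: one replaces $B$ by a space $C$ carrying an extra step $L_2\xrightarrow{l-k-1}L_3\xrightarrow{k}L_4$ and replaces $A_1$ by a modified $A_1'$ (which is \emph{not} the fibre product $A_1\times_B C$).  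After this, $A_1'$ and $C\times_B A_2$ meet in expected dimension inside a codimension-$(l-k-1)$ locus $D\subset C$, and Lemma~\ref{lem:fibreproduct} finally applies.  The resulting intersection $P_1\cup P_2$ now has two components of the \emph{same} dimension; the $\p^{l-k-1}$ that produces $H^\star(\p^{l-k-1})$ is the fibre of the map $P_1\to P_1'$ forgetting this auxiliary $L_2$, not the projectivisation of $\ker(\bar z)$ over $\Delta$ that you describe.  Your picture of a generically-$\p^{|k-l|-1}$ fibration over $\Delta$ is exactly the non-flat map whose jumping fibres make the naive approach fail.

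Two further points.  First, the pushforwards in the paper's argument (Steps~1--3) require knowing that several auxiliary varieties ($E_1$, $P_2$, and $W$) have rational singularities; this is a nontrivial separate Lemma~\ref{lem:ratsings}, and your sketch doesn't anticipate it.  Second, the paper does \emph{not} compute both $\sE*\sF$ and $\sF*\sE$ and compare them off $\Delta$ as you propose; it computes only $\sE(k,l)*\sF(k,l)$ and shows directly that its kernel decomposes as a piece supported on $p(P_1)=\Delta$ with cohomology $\O_\Delta\otimes H^\star(\p^{l-k-1})$ plus a piece supported on $p(P_2)$ which is identified, after another rational-singularities argument, with $\sF(k-1,l+1)*\sE(k-1,l+1)$.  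The splitting of the resulting triangle then follows from a $\Hom$-vanishing using that the $\sE$'s are sheaves and Proposition~\ref{prop:Eadj}.  This sidesteps the need to build a comparison morphism between the two composites, which you flag as ``the hard part'' but do not construct.
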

\begin{proof}
We deal with the case $k \le l$ since the case $k \ge l$ is analogous. 

Proposition \ref{prop:train} implies that the kernel $\sE(k,l) * \sF(k,l)$ is isomorphic to the convolution of kernels corresponding to the following sequence of maps
\begin{equation*}
\xymatrix {
X(k,1,l-1)_2 \ar[r]^{i_1} \ar[d]_{q_1} &Y(k,1,l-1) &X(k,1,l-1)_1 \ar[l]_{i_2} \ar[dr]_{q_2} \\
Y(k,l) & & & Y(k+1,l-1) }
\end{equation*}
\begin{equation*}
\xymatrix {
& X(k,1,l-1)_1 \ar[dl]_{q_1'} \ar[r]^{i_1'} &Y(k,1,l-1) &X(k,1,l-1)_2 \ar[l]_{i_2'} \ar[d]_{q_2'} \\
Y(k+1,l-1) & & & Y(k,l) }.
\end{equation*}
Here the upper $Y(k,1,l-1)$ carries the line bundle $\det(L_2/L_1)^{l-k-1}\{l-1\}$ while the lower $Y(k,1,l-1)$ caries the line bundle $\det(L_3/L_2)^\vee \otimes \det(L_1/L_0) \{k\}$. 

As in the proof of Proposition \ref{prop:Ecomps1} we first construct the fibre products
\begin{eqnarray*}
B &:=& X(k,1,l-1)_1 \times_{Y(k+1,l-1)} X(k,1,l-1)_1 \\
&\cong& \{ L_0 \overset{k}{\underset{k}{\rightrightarrows}} \begin{matrix} L_1' \\ L_1 \end{matrix} \overset{1}{\underset{1}{\rightrightarrows}} L_2 \xrightarrow{l-1} L_3:  zL_2 \subset L_0 \}
\end{eqnarray*}
with respect to $q_2$ and $q_1'$, as well as the fibre products
\begin{eqnarray*}
A_1 &:=& X(k,1,l-1)_2 \times_{Y(k,1,l-1)} B \\
&\cong& \{L_0 \overset{k}{\underset{k}{\rightrightarrows}} \begin{matrix} L_1' \\ L_1 \end{matrix} \overset{1}{\underset{1}{\rightrightarrows}} L_2 \xrightarrow{l-1} L_3: zL_2 \subset L_0 \text{ and } zL_3 \subset L_1\}
\end{eqnarray*}
and
\begin{eqnarray*}
A_2 &:=& B \times_{Y_{k,1,l-1}} X(k,1,l-1)_2 \\
&\cong& \{L_0 \overset{k}{\underset{k}{\rightrightarrows}} \begin{matrix} L_1' \\ L_1 \end{matrix} \overset{1}{\underset{1}{\rightrightarrows}} L_2 \xrightarrow{l-1} L_3: zL_2 \subset L_0 \text{ and } zL_3 \subset L_1'\}.
\end{eqnarray*}
Since $X(k,1,l-1)_1$ and $X(k,1,l-1)_2$ intersect in the expected dimension in $Y(k,1,l-1)$ and $q_2, q_1'$ are flat, we can use Lemma \ref{lem:fibreproduct} to replace the sequence of maps above by the following sequence:
\begin{equation*}
\xymatrix{
A_1 \ar[r]^{j_1} \ar[d]_{\tilde{q}_1} &B &A_2 \ar[l]_{j_2} \ar[d]_{\tilde{q}_2} \\
X(k,1,l-1)_2 \ar[d]_{q_1} & & X(k,1,l-1)_2 \ar[d]_{q_2'} \\
Y(k,l) & & Y(k,l).
}
\end{equation*}
Here $B$ carries the product of the pullback of the line bundles from the two copies of $Y(k,1,l-1)$, which is 
$\det(L_2/L_1)^{l-k-1} \otimes \det(L_3/L_2)^\vee \otimes \det(L_1'/L_0) \{l+k-1\}$. 

Unfortunately, the intersection $A_1 \cap A_2 \subset B$ contains two components of different dimensions. This means $A_1 \cap A_2$ cannot be an intersection of the right dimension in {\em any} subvariety of $B$. So we introduce two new spaces:
$$C := \{ L_0 \overset{k}{\underset{k}{\rightrightarrows}} \begin{matrix} L_1' \\ L_1 \end{matrix} \overset{1}{\underset{1}{\rightrightarrows}} L_2 \xrightarrow{l-k-1} L_3 \xrightarrow{k} L_4: zL_2 \subset L_0 \text{ and } zL_4 \subset L_2\}$$
and 
$$A_1' := \{L_0 \overset{k}{\underset{k}{\rightrightarrows}} \begin{matrix} L_1' \\ L_1 \end{matrix} \overset{1}{\underset{1}{\rightrightarrows}} L_2 \xrightarrow{l-k-1} L_3 \xrightarrow{k} L_4: zL_3 \subset L_0 \text{ and } zL_4 \subset L_1 \} \subset C.$$
There are natural maps $\pi_1:A_1' \rightarrow A_1$ and $\pi: C \rightarrow B$ obtained by forgetting $L_3$; but note that $A_1'$ is {\em not} the fibre product $A_1 \times_B C$. Instead, we have the following commutative diagram:
\begin{equation*}
\xymatrix {
A_1' \ar[r]^{j_1'} \ar[d]_{\pi_1} & C \ar[d]_{\pi} & C \times_B A_2 \ar[l]_{j_2'} \ar[d]_{\pi_2} \\
A_1 \ar[r]^{j_1} & B & A_2 \ar[l]_{j_2} }.
\end{equation*}
The middle vertical map $\pi$ is a $\G(l-k-1,l-1)$ bundle (hence flat) while the left vertical map $\pi_1$ is a birational map between smooth varieties (hence ${\pi_1}_* \circ \pi_1^* \cong \mbox{id}$). Consequently
$$j_2^* \circ {j_1}_* \cong j_2^* \circ ({j_1}_* \circ {\pi_1}_*) \circ \pi_1^* \cong j_2^* \circ \pi_* \circ {j_1'}_* \circ \pi_1^* \cong {\pi_2}_* \circ {j_2'}^* \circ {j_1'}_* \circ \pi_1^*.$$
Notice that if we also tensor by a line bundle on $B$ then the same equality holds after tensoring with the pullback of this line bundle to $C$. Thus we have the diagram 
\begin{equation*}
\xymatrix {
A_1' \ar[r]^{j_1'} \ar[d]_{\pi_1} & C & C \times_B A_2 \ar[l]_{j_2'} \ar[d]^{\pi_2} \\
A_1 \ar[d]_{\tilde{q_1}} & & A_2 \ar[d]^{\tilde{q_2}} \\
X(k,1,l-1)_2 \ar[d]_{q_1} & & X(k,1,l-1)_2 \ar[d]^{q_2} \\
Y(k,l) & & Y(k,l) }
\end{equation*}
where $C$ is equipped with the line bundle $\sL$, the pullback of the line bundle carried by $B$ 
\begin{eqnarray*}
\sL 
&=& \det(L_2/L_1)^{l-k-1} \det(L_4/L_2)^\vee \det(L_1'/L_0) \{l+k-1\} \\
&\cong& \det(L_2/L_1)^{l-k} \det(L_4/L_1)^\vee \det(L_1'/L_0) \{l+k-1\}. 
\end{eqnarray*}

The intersection $A_1' \cap (C \times_B A_2)$ inside $C$ consists of two pieces.  The first,
$$P_1 := \{L_0 \overset{k}{\underset{k}{\rightrightarrows}} \begin{matrix} L_1' \\ L_1 \end{matrix} \overset{1}{\underset{1}{\rightrightarrows}} L_2 \xrightarrow{l-k-1} L_3 \xrightarrow{k} L_4: zL_4 \subset L_1 = L_1' \text{ and } zL_3 \subset L_0 \}$$
is the locus where $L_1 = L_1'$, while the second
$$P_2 := \{L_0 \overset{k}{\underset{k}{\rightrightarrows}} \begin{matrix} L_1' \\ L_1 \end{matrix} \overset{1}{\underset{1}{\rightrightarrows}} L_2 \xrightarrow{l-k-1} L_3 \xrightarrow{k} L_4: zL_4 \subset L_1 \cap L_1' \text{ and } zL_3 \subset L_0 \text{ and } \dim \ker(z|_{L_4/L_0}) \ge l+1 \}$$
is the closure of the locus in $A_1' \cap (C \times_B A_2)$ where $L_1 \ne L_1'$. 

A dimension count shows that $\dim(P_1) = \dim(P_2) = \dim Y(k,l) + l-k-1$, while the codimensions of $A_1'$ and $C \times_B A_2$ inside $C$ are $(l-1)+k(l-k-1)$ and $l-1$ respectively. This means that the expected dimension of  $A_1' \cap (C \times_B A_2)$ inside $C$ is $\dim Y(k,l)$. 
A quick check shows that $A_1'$ and $C \times_B A_2$ are both contained inside 
$$D := \{L_0 \overset{k}{\underset{k}{\rightrightarrows}} \begin{matrix} L_1' \\ L_1 \end{matrix} \overset{1}{\underset{1}{\rightrightarrows}} L_2 \xrightarrow{l-k-1} L_3 \xrightarrow{k} L_4: zL_2 \subset L_0 \text{ and } zL_4 \subset L_2 \text{ and } zL_3 \subset L_1'\}$$
which has codimension $l-k-1$. Thus $A_1'$ and $C \times_B A_2$ intersect in the expected dimension inside $D$. 
Now 
\begin{center} (*) $D$ is carved out by the zero locus of $z: L_3/L_2 \rightarrow L_2/L_1' \{2\}$. \end{center} 
Thus we can apply Lemma \ref{lem:fibreproduct} to conclude that ${j_2'}^* ({j_1'}_* (\cdot) \otimes \sL)$ corresponds to a kernel $\sK$ whose cohomology is
$$\H^{-s}(\sK) \cong \left( \sL \otimes \wedge^s (L_3/L_2 \otimes (L_2/L_1')^\vee \{-2\} \right)|_{P_1 \cup P_2}.$$

To summarize, we have shown that the functor $\sE(k,l) * \sF(k,l)$ is given by the FM transform $p_{2*}(p_1^*(\cdot) \otimes \sK)$, where $p_1$ and $p_2$ are the two projections from $P_1 \cup P_2$ to $Y(k,l)$ given by forgetting $L_1',L_2,L_3$ and $L_1,L_2,L_3$ respectively. 

What remains is to compute $p_*(\sK)$ where 
$$p: P_1 \cup P_2 \rightarrow Y(k,l) \times Y(k,l)$$ 
is the map which forgets both $L_2$ and $L_3$. To do this we use the exact sequence for $\O_{P_1 \cup P_2}$
$$0 \rightarrow \O_{P_1 \cup P_2} \rightarrow \O_{P_1} \oplus \O_{P_2} \rightarrow \O_E \rightarrow 0$$
where $E = P_1 \cap P_2$ is a divisor in both $P_1$ and $P_2$. 
This allows us to calculate $$p_* \left( \sL \otimes \wedge^s (L_3/L_2 \otimes (L_2/L_1')^\vee \{-2\}) \right)$$ in a few distinct steps by studying the restriction of the this bundle to $E$, $P_1$ and $P_2$ separately. 

{\bf Step 1.} We first show that for all $s$ 
\begin{equation}\label{eq:pushE}
p_*(\sL \otimes \wedge^s (L_3/L_2 \otimes (L_2/L_1')^\vee \{-2\})|_E) = 0.
\end{equation}
Notice that
$$E \cong \{L_0 \xrightarrow{k} L_1 \xrightarrow{1} L_2 \xrightarrow{l-k-1} L_3 \xrightarrow{k} L_4: zL_3 \subset L_0 \text{,  } zL_4 \subset L_1 \text{, and } \dim \ker(z|_{L_4/L_0}) \ge l+1\}.$$
The map which forgets $L_2$ is a $\p^{l-k-1}$ bundle $f_1: E \rightarrow E_1,$ with 
$$E_1 = \{L_0 \xrightarrow{k} L_1 \xrightarrow{l-k} L_3 \xrightarrow{k} L_4: zL_3 \subset L_0 \text{ and } zL_4 \subset L_1 \text{ and } \dim \ker(z|_{L_4/L_0}) \ge l+1\}.$$
Now the relative cotangent bundle $\Omega_{f_1}$ is $(L_2/L_1) \otimes (L_3/L_2)^\vee $. Thus:
\begin{eqnarray*}
& & (\det(L_2/L_1)^{l-k-1} \otimes \det(L_3/L_2)^\vee) \otimes \wedge^s (L_3/L_2 \otimes (L_2/L_1)^\vee \{-2\}) \\
&\cong& \wedge^{l-k-1-s} ((L_2/L_1) \otimes (L_3/L_2)^\vee) \{-2s\} \\
&\cong& \Omega^{l-k-1-s}_{f_1} \{-2s\}
\end{eqnarray*}
so that
$$f_{1*}( \sL \otimes \wedge^s (L_3/L_2 \otimes (L_2/L_1')^\vee \{-2\})|_E) \cong \det(L_4/L_3)^\vee \otimes \det(L_1/L_0)[-l+k+1+s]\{l+k-1-2s\} |_{E_1}.$$

So to compute $p_* = f_{3*}f_{1*}$, we must compute the pushforward by the map $f_3: E_1 \rightarrow p(E)$ which forgets $L_3$.  This map is not flat, so we first resolve the singularities of $E_1$ using 
$$E_2 := \{L_0 \xrightarrow{k} L_1 \xrightarrow{l-k} L_3 \xrightarrow{1} L_{3 \frac{1}{2}} \xrightarrow{k-1} L_4: zL_{3\frac{1}{2}} \subset L_0 \text{ and } zL_4 \subset L_1 \}.$$
This gives a commutative diagram 
\begin{equation*}
\xymatrix{
& E_2 \ar[dr]^{f'_2} \ar[dl]_{f_2} & \\
E_1 \ar[dr]_{f_3} & & E'_1 \ar[dl]^{f'_3} \\
& p(E) &
}
\end{equation*}
where $f'_2$ forgets $L_3$ and $f'_3$ forgets $L_{3\frac{1}{2}}$. 

Now $E_2$ is an iterated Grassmannian bundle, hence smooth. Also, the map $f_2: E_2 \rightarrow E_1$ given by forgetting $L_{3 \frac{1}{2}}$ is generically one-to-one and surjective, because for a general point in $E_1$, 
$\dim(\ker(z|_{L_4/L_0})) = l+1$. This shows that $f_2$ is a resolution of $E_1$. By Lemma \ref{lem:ratsings} below, $E_1$ has rational singularities, which implies that that $f_{2*} \circ f_2^* = \id$. Thus we have
$$f_{3*} = f_{3*} \circ f_{2*} \circ f_2^* = f'_{3*} \circ f'_{2*} \circ f_2^*.$$

Finally, the map $f'_2$ is a flat $\p^{l-k}$ bundle and the restriction of $f_2^*(\det(L_4/L_3)^\vee \otimes \det(L_1/L_0))$ to a fibre of $f'_2$ is $\O(-1)$. Hence its pushforward via $f'_{2*}$ vanishes and we have proven \ref{eq:pushE}. 

{\bf Step 2.}
Next we show that 
\begin{equation}\label{eq:pushP1} 
p_* \left( (\sL \otimes \wedge^s (L_3/L_2 \otimes (L_2/L_1')^\vee \{-2\}))|_{P_1} \right) \cong \O_{\Delta}[s-(l-k-1)]\{-2s+(l-k-1)\} \in D(Y(k,l) \times Y(k,l)).
\end{equation}
On $P_1$ (where $L_1=L_1'$) we have 
\begin{eqnarray*}
& & \sL \otimes \wedge^s(L_3/L_2 \otimes (L_2/L_1')^\vee \{-2\})) \\
&\cong& \det(L_2/L_1)^{l-k}\det(L_4/L_1)^\vee \det(L_1/L_0) \{l+k-1\} \otimes \wedge^s(L_3/L_2 \otimes (L_2/L_1')^\vee \{-2\})) \\
&\cong& \det(L_4/L_3)^\vee \det(L_1/L_0) \det \left( (L_3/L_2)^\vee \otimes (L_2/L_1) \right) \otimes \wedge^s(L_3/L_2 \otimes (L_2/L_1)^\vee)) \{-2s+l+k-1\} \\
&\cong& \det(L_4/L_3)^\vee \det(L_1/L_0) \otimes \wedge^{l-k-1-s} \left( (L_3/L_2)^\vee \otimes (L_2/L_1) \right) \{-2s+l+k-1\}.
\end{eqnarray*}
Forgetting $L_2$ gives a $\p^{l-k-1}$ fibration $P_1 \rightarrow P_1'$,
$$P_1' := \{L_0 \xrightarrow{k} L_1 \xrightarrow{l-k} L_3 \xrightarrow{k} L_4: zL_3 \subset L_0 \text{ and } zL_4 \subset L_1 \},$$
with relative cotangent bundle $(L_3/L_2)^\vee \otimes (L_2/L_1)$. So pushing forward 
$$\det(L_4/L_3)^\vee \det(L_1/L_0) \otimes \wedge^{l-k-1-s} \left( (L_3/L_2)^\vee \otimes (L_2/L_1) \right) \{-2s+l+k-1\}$$
via the map which forgets $L_2$ gives the line bundle
$$\left( \det(L_4/L_3)^\vee \otimes \det(L_1/L_0) \right)_{P_1'}[s-(l-k-1)]\{-2s+l+k-1\}$$
on $P_1'$.

Now the map $f_3: P_1' \rightarrow \Delta \subset Y(k,l)\times Y(k,l)$ given by forgetting $L_3$ is generically one-to-one since there is no condition on the dimension of $\ker(z|_{L_4/L_0})$. The exceptional divisor of $f_3$ is the locus where $\dim \ker(z|_{L_4/L_0}) \ge l+1$, which is precisely $E_1$ from Step 1. Thus $E_1$ is carved out as the degeneracy locus of $z: L_4/L_3 \rightarrow L_1/L_0 \{2\}$, and
\begin{eqnarray*}
\O_{P_1'}(E_1) 
&\cong& \det((L_4/L_3)^\vee \otimes \det(L_1/L_0) \{2\}) \\
&\cong& \det((L_4/L_3)^\vee \otimes \det(L_1/L_0)) \{2k\}.
\end{eqnarray*} 
Since $f_3$ is a birational map onto the smooth variety $\Delta \cong Y(k,l)$, 
$$ f_{3*}\O_{P_1'} \cong \O_\Delta \in D(Y(k,l) \times Y(k,l)).$$ 
Since $E_1 \subset P_1'$ is cut out by the section $\det(z): \det(L_4/L_3) \rightarrow \det(L_1/L_0) \{2k\}$, we have 
$$\O_{E_1}(E_1) \cong ((L_4/L_3)^\vee \otimes \det(L_1/L_0) \{2k\})|_{E_1}.$$
As in Step 1, the pushforward $f_{3*}\O_{E_1}(E_1)=0$. Thus, from the exact triangle
$$\O_{P_1'} \rightarrow \O_{P_1'}(E_1) \rightarrow \O_{E_1}(E_1)$$
we find that $f_{3*}\O_{P_1'}(E_1)\cong \O_\Delta$. 
Hence the pushforward of 
$$\left( \det(L_4/L_3)^\vee \otimes \det(L_1/L_0) \{2k\} \right)_{P_1'}[s-(l-k-1)]\{-2s+l-k-1\}$$
is $\O_\Delta [s-(l-k-1)] \{-2s+(l-k-1)\}$ which proves (\ref{eq:pushP1}).  

{\bf Step 3.}
Finally, we show that 
\begin{equation}\label{eq:pushP2}
p_* \left( (\sL \otimes \wedge^s (L_3/L_2 \otimes (L_2/L_1')^\vee \{-2\})|_{P_2} \right) \cong 
\left\{
\begin{array}{ll}
\sF(k-1,l+1) * \sE(k-1,l+1) & \mbox{ if $s = 0$ } \\
0 & \mbox{ if $s \ne 0$ } 
\end{array}
\right.
\end{equation}
By Lemma \ref{lem:ratsings}, $P_2$ has rational singularities. So, just as in Step 1, we can first pull back to the natural resolution $Q_2$ of $P_2$
$$Q_2 = \{L_0 \xrightarrow{k-1} L_{\frac{1}{2}} \overset{1}{\underset{1}{\rightrightarrows}} \begin{matrix} L_1' \\ L_1 \end{matrix} \overset{1}{\underset{1}{\rightrightarrows}} L_2 \xrightarrow{l-k-1} L_3 \xrightarrow{1} L_{3\frac{1}{2}} \xrightarrow{k-1} L_4: zL_4 \subset L_{\frac{1}{2}} \text{ and } zL_{3\frac{1}{2}} \subset L_0 \}$$
and then push forward from there. 

The first map $g:Q_2 \rightarrow \tilde{P_2}$ forgets $L_3$ and projects onto
$$\tilde{P_2} := \{L_0 \xrightarrow{k-1} L_{\frac{1}{2}} \overset{1}{\underset{1}{\rightrightarrows}} \begin{matrix} L_1' \\ L_1 \end{matrix} \overset{1}{\underset{1}{\rightrightarrows}} L_2 \xrightarrow{l-k} L_{3\frac{1}{2}} \xrightarrow{k-1} L_4: zL_4 \subset L_{\frac{1}{2}} \text{ and } zL_{3\frac{1}{2}} \subset L_0 \}.$$
Notice that $g$ is a $\G(L_3/L_2, L_{3\frac{1}{2}}/L_2)=\p^{l-k-1}$ fibration. The restriction of $\sL$ to any fibre of $g$ is trivial, so $\sL \otimes \wedge^s (L_3/L_2 \otimes (L_2/L_1')^\vee)$ restricts to $\wedge^s (L_3/L_2)$. Since $L_3/L_2$ is the tautological bundle on the Grassmannian, the pushforward 
$g_* \left( (\sL \otimes \wedge^s (L_3/L_2 \otimes (L_2/L_1')^\vee \{-2\})|_{P_2} \right)$
vanishes for $s \ne 0$. When $s=0$, this pushforward gives
$$\sL := \det(L_2/L_1)^{l-k} \otimes \det(L_4/L_1)^\vee \otimes \det(L_1'/L_0) \{l+k-1\}.$$

We now push forward from $\tilde{P_2}$ to 
$$\tilde{P_2}' := \{L_0 \xrightarrow{k-1} L_{\frac{1}{2}} \overset{1}{\underset{1}{\rightrightarrows}} \begin{matrix} L_1' \\ L_1 \end{matrix} \overset{l-k+1}{\underset{l-k+1}{\rightrightarrows}} L_{3\frac{1}{2}} \xrightarrow{k-1} L_4: zL_4 \subset L_{\frac{1}{2}} \text{ and } zL_{3\frac{1}{2}} \subset L_0 \}$$
via the map $f:\tilde{P_2}\rightarrow \tilde{P_2}'$ which forgets $L_2$. Notice that $\tilde{P_2}'$ and $\tilde{P_2}$ are both smooth and that $f: \tilde{P_2} \rightarrow \tilde{P_2}'$ is a blowup along the locus where $L_1 = L_1'$. The exceptional divisor $E_f$ of $f$ is cut out as the zero locus of the natural inclusion map $L_1'/L_{\frac{1}{2}} \rightarrow L_2/L_1$ so that $\O_{\tilde{P_2}}(E_f) \cong (L_1/L_{\frac{1}{2}})^\vee \otimes L_2/L_1$. 

On the other hand, since the fibres of $f|_{E_f}$ are $\p^{l-k}$s, $f_*(\O_{\tilde{P_2}}(E_f^{\otimes i})) \cong \O_{\tilde{P_2}'}$ for $i=0, \dots, l-k$. Thus
\begin{eqnarray*}
f_*(\sL|_{\tilde{P_2}}) 
&\cong& f_*(\det(L_2/L_1)^{l-k} \otimes \det(L_4/L_1)^\vee \otimes \det(L_1'/L_0) \{l+k-1\}) \\
&\cong& f_*(E_f^{\otimes l-k} \otimes (L_1/L_{\frac{1}{2}})^{l-k} \otimes \det(L_4/L_1)^\vee \otimes \det(L_1'/L_0)) \{l+k-1\} \\
&\cong& \left( (L_1/L_{\frac{1}{2}})^{l-k} \otimes \det(L_4/L_1)^\vee \otimes \det(L_1'/L_0) \right)|_{\tilde{P_2}'} \{l+k-1\}
\end{eqnarray*}
where we use the projection formula to obtain the last line. Consequently, the left side of (\ref{eq:pushP2}) when $s=0$ is equal to 
\begin{equation}\label{eq:pushP2'}
f'_*((L_1/L_{\frac{1}{2}})^{l-k} \otimes \det(L_4/L_1)^\vee \otimes \det(L_1'/L_0)\{l+k-1\})
\end{equation}
where $f': \tilde{P_2'} \rightarrow Y(k,l) \times Y(k,l)$ forgets $L_{\frac{1}{2}}$ and $L_{3\frac{1}{2}}$. 

Finally, we need to calculate $\sF(k-1,l+1) * \sE(k-1,l+1)$. This can be done directly. Namely, the intersection 
$$W := \pi_{12}^{-1} W(k-1,l+1) \cap \pi_{23}^{-1} W(k-1,l+1) \subset Y(k,l) \times Y(k-1,l+1) \times Y(k,l)$$
equals 
$$\{L_0 \xrightarrow{k-1} L_1' \overset{1}{\underset{1}{\rightrightarrows}} \begin{matrix} L_1 \\ L_1'' \end{matrix} \overset{l}{\underset{l}{\rightrightarrows}} L_2: zL_1 \subset L_0 \text{ and } zL_1'' \subset L_0 \text{ and } zL_2 \subset L_1'\}$$
which is of the expected dimension. Thus 
\begin{eqnarray*}
\sF(k-1,l+1) * \sE(k-1,l+1) &\cong& \pi_{13*}(\pi_{12}^* (\O_{W(k-1,l+1)} \otimes \det(L_2/L_1')^\vee \det(L_1/L_0) \{k-1\}) \\
& & \otimes \pi_{23}^*(\O_{W(k-1,l+1)} \otimes (L_1'/L_1)^{l-k+1} \{l\})) \\
&\cong& \pi_{13*}(\O_W \otimes \det(L_2/L_1')^\vee \det(L_1/L_0) \otimes (L_1''/L_1')^{l-k+1} \{l+k-1\}) \\
&\cong& \pi_{13*}(\O_W \otimes \det(L_2/L_1'')^\vee \det(L_1/L_0) \otimes (L_1''/L_1')^{l-k} \{l+k-1\}). 
\end{eqnarray*}
By Lemma \ref{lem:ratsings}, $W$ has rational singularities, so we can first pull back from $W$ to 
$$\tilde{P_2}' \cong \{L_0 \xrightarrow{k-1} L_1' \overset{1}{\underset{1}{\rightrightarrows}} \begin{matrix} L_1 \\ L_1'' \end{matrix} \overset{l-k+1}{\underset{l-k+1}{\rightrightarrows}} L_{1\frac{1}{2}} \xrightarrow{k-1} L_2: zL_{1\frac{1}{2}} \subset L_0 \text{ and } zL_2 \subset L_1'\}.$$
Then $\sE(k-1,l+1) * \sF(k-1,l+1)$ is equal to  
$$f'_*((L_1''/L_1')^{l-k} \otimes \det(L_2/L_1'')^\vee \otimes \det(L_1/L_0) \{l+k-1\})$$
where $f'$ is the map from $\tilde{P_2}'$ which forgets $L_1'$ and $L_{1\frac{1}{2}}$. 
After relabeling  $L_1' \leftrightarrow L_{\frac{1}{2}}$, $L_1 \leftrightarrow L_1'$, $L_1'' \leftrightarrow L_1$, $L_{1\frac{1}{2}} \leftrightarrow L_{3\frac{1}{2}}$ and $L_2 \leftrightarrow L_4$ we find that this is the same as (\ref{eq:pushP2'}).  This completes the step.

Notice that  $f'(P_2')$ does not contain the diagonal, so that we also have $\Delta \not\subset \mbox{supp}(\sF(k-1,l+1) * \sE(k-1,l+1))$ (this was one of the conditions required in the definition of a geometric categorical
$\sl_2$ action.)

{\bf Step 4.} We now put together the calculations from Steps 1, 2 and 3 to compute $p_*(\sK)$. Consider the exact triangle $\sK' \rightarrow \sK \rightarrow \H^0(\sK \otimes \sL) \cong \sL$ obtained by projecting onto the top cohomology of $\sK$ (recall that we only know the cohomology of $\sK$ and not $\sK$ itself). The cohomology of $\sK'$ is 
$$\H^{-s}(\sK') \cong \left( \sL \otimes \wedge^s (L_3/L_2 \otimes (L_2/L'_1)^\vee \{-2\} \right)|_{P_1 \cup P_2}$$
where $s \ge 1$. From the calculations above we see that for $s \ge 1$
$$p_*(\sL \otimes \wedge^s (L_3/L_2 \otimes (L_2/L'_1)^\vee \{-2\})) \cong \O_\Delta [s-(l-k-1)] \{-2s+(l-k-1)\}.$$ 
Therefore the spectral sequence which computes $p_*(\sK')$ degenerates at the $E_2$ term, and we obtain 
$$\H^*(\sK') \cong \O_\Delta[l-k-1]\{-l+k+1\} \oplus \dots \oplus \O_\Delta[-l+k-1]\{l-k+1\}.$$

Now $p_*(\sL) \cong \O_\Delta[-l+k+1]\{l-k-1\} \oplus \sF(k-1,l+1) * \sE(k-1,l+1)$ and
$$p_*(\sK) \cong \Cone(p_*(\sL)[-1] \rightarrow p_*(\sK')).$$
However, we also have that (omitting the $\{\cdot\}$ shifts)
\begin{eqnarray*}
& & \Hom(\sF(k-1,l+1) * \sE(k-1,l+1)[-1], \O_\Delta[j])  \\
&\cong& \Hom(\sE(k-1,l+1)[-1], \sF_R(k-1,l+1)[j]) \\
&\cong& \Hom(\sE(k-1,l+1), \sE(k-1,l+1)[k-l+j]),
\end{eqnarray*}
which is zero if $j < l-k$ (since all the $\sE$s and $\sF$s are sheaves). Thus 
$$\Hom(\sF(k-1,l+1) * \sE(k-1,l+1)[-1], p_*(\sK')) = 0$$
and hence
$$p_*(\sK) \cong \sF(k-1,l+1) * \sE(k-1,l+1) \oplus \sP'$$
where $\H^*(\sP') \cong \O_\Delta \otimes_\C H^\star(\p^{l-k-1})$.  This completes the proof.
\end{proof}

\begin{Lemma}\label{lem:ratsings} The varieties $E_1, P_2$ and $W$ in the proof of Proposition \ref{prop:comm1} have rational singularities. 
\end{Lemma}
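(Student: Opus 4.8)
The plan is to exhibit, for each of $E_1$, $P_2$ and $W$, a resolution of singularities by a smooth iterated Grassmannian bundle of lattices, and then to apply the standard criterion that a variety $X$ has rational singularities if and only if it is normal and $R^i\pi_*\O_{\tilde X}=0$ for all $i>0$ and some (equivalently any) resolution $\pi\colon\tilde X\to X$. Conveniently, the required resolutions were already written down inside the proof of Proposition \ref{prop:comm1}: the map $E_2\to E_1$ forgetting $L_{3\frac{1}{2}}$, the map $Q_2\to P_2$ forgetting $L_{\frac{1}{2}}$ and $L_{3\frac{1}{2}}$, and (after the relabelling there) the map $\tilde{P_2}'\to W$ forgetting the intermediate lattice; in each case the source is smooth, being an iterated tower of Grassmannian bundles, and the map is proper and birational. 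Since rational singularities are in particular Cohen--Macaulay, one may equally well verify the criterion in Kempf's form: $X$ Cohen--Macaulay and $\pi_*\omega_{\tilde X}\xrightarrow{\sim}\omega_X$.

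The vanishing $R^{>0}\pi_*\O_{\tilde X}=0$ is the easy half. Each of the three resolution maps is a composition of maps that forget one intermediate lattice, and such a map is proper with every fibre a Grassmannian --- the Grassmannian of subspaces trapped between two fixed lattices and annihilated modulo $L_0$ by $z$ --- with generic fibre a point. Since $H^{>0}(\G(a,b),\O)=0$ for every Grassmannian, the theorem on formal functions yields $R^i\pi_*\O_{\tilde X}=0$ for $i>0$; and because $\pi$ is proper birational and $\tilde X$ is smooth, $\pi_*\O_{\tilde X}=\O_X$ as soon as $X$ is normal.

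So the substance is normality. Each variety sits inside a smooth iterated Grassmannian bundle of lattices and is cut out there by the vanishing of bundle maps induced by $z$ on lattice quotients: $E_1\subset P_1'$ is precisely $\{\det z=0\}$ for the map $z\colon L_4/L_3\to (L_1/L_0)\{2\}$ of bundles of equal rank $k$, hence a Cartier divisor, while $P_2$ and $W$ are (closures of) degeneracy loci of $z$ of the expected codimension, hence local complete intersections in their smooth ambient varieties. Being lci, all three are Cohen--Macaulay, so Serre's condition $S_2$ is automatic, and normality reduces to checking $R_1$, i.e.\ that the singular locus has codimension at least two. This follows once one knows that the successive corank loci of the relevant $z$-maps --- equivalently the strata by Jordan type of $z$ acting on the quotients $L_i/L_0$ --- occur in their expected codimensions, since the singular locus is then confined to the deeper, higher-codimension strata.

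The $R_1$ estimate is the main obstacle: it requires a careful local computation in the lattice-bundle charts showing that the Jordan-type strata of $z$ have the expected dimensions on $E_1$, $P_2$ and $W$. An alternative that sidesteps this would be to recognise $E_1$, $P_2$ and $W$ as fibrations over (convolution products of) affine Grassmannian Schubert varieties $\overline{\Gr_\mu}$ and quote that the latter have rational singularities, using that a flat family with rational-singularity fibres over a rational-singularity base again has rational singularities; but the direct approach is self-contained and in keeping with the rest of the paper.
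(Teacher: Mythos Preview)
Your proposal has a genuine gap in what you call ``the easy half.'' The deduction ``every fibre is a Grassmannian, $H^{>0}(\G(a,b),\O)=0$, so by formal functions $R^{>0}\pi_*\O_{\tilde X}=0$'' is not valid. The theorem on formal functions identifies the completion of $R^i\pi_*\O$ at $x$ with $\varprojlim_n H^i(\tilde X_n,\O_{\tilde X_n})$, where $\tilde X_n$ is the $n$th infinitesimal thickening of the scheme-theoretic fibre; knowing $H^i$ of the \emph{reduced} fibre vanishes says nothing about these thickenings. Your forgetful maps are not flat (the fibre dimension jumps with the Jordan type of $z$), so there is no shortcut here. In fact, for a resolution $\pi\colon\tilde X\to X$ with $X$ normal, the vanishing $R^{>0}\pi_*\O_{\tilde X}=0$ \emph{is} the content of rational singularities, so it cannot be the easy half; your argument is circular.

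This is exactly why the paper proceeds differently. It uses the Kempf-type criterion you mention but do not pursue: $X$ has rational singularities if $X$ is normal, Cohen--Macaulay, and $R^0\pi_*\omega_{\tilde X}\cong\omega_X$ (the last up to a line bundle suffices, since the two agree in codimension one). The work then becomes an explicit canonical-bundle computation on each resolution $E_2$, $Q$, $W'$: one writes $\omega$ of the resolution using the iterated Grassmannian-bundle description, collects the exponent of $\det(L_j/L_0)$ for the lattice $L_j$ being forgotten, and checks that after peeling off a pullback line bundle (and in two of the three cases twisting by a suitable multiple of the exceptional divisor) what remains pushes forward to a line bundle. These exponent counts are the actual content of the lemma.

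There is a second issue with your treatment of $P_2$. You assert that $P_2$ is a degeneracy locus of expected codimension and hence lci. But an irreducible component of an lci scheme need not be lci, and the paper does not claim $P_2$ is lci: only $P=P_1\cup P_2$ is cut out by the section $z\colon L_4/L_3\to (L_2/L_1')\{2\}$ in the smooth Grassmannian bundle $\G(k,L_2/L_0)\to P_1$. The paper extracts Cohen--Macaulayness of $P_2$ from that of $P_1\cup P_2$ via the auxiliary Lemma~\ref{lem:CM} (using that $P_1$ is smooth and $P_1\cap P_2$ is a divisor in $P_1$), and this step is not covered by your sketch.
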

\begin{proof}
A variety $X$ has rational singularities if for some (or equivalently any) resolution of singularities 
$\pi: Y \rightarrow X$ one has $\pi_*(\O_Y) = \O_X$. 

Given a resolution $\pi: Y \rightarrow X$ of singularities, to show that $\pi_*(\O_Y) = \O_X$ it suffices to show that 
$X$ is normal, Cohen-Macaulay, and that $R^0 \pi_* \omega_Y \cong \omega_X$ 
(see  Theorem 5.10 of \cite{KM}).
But if one can show that $R^0 \pi_* \omega_Y$ and $\omega_X$ are isomorphic up to tensoring by a line bundle then it follows that they are isomorphic, since they agree in codimension one (away from the indeterminacy locus of $X \dashrightarrow Y$).
Thus, to show that $X$ has rational singularities it suffices to show that $X$ is normal, Cohen-Macaulay, and that for some resolution of singularities, $\omega_X$ and $R^0 \pi_* \omega_Y$ are isomorphic up to tensoring by a line bundle. 

Begin first with the variety
$$E_1 = \{L_0 \xrightarrow{k} L_1 \xrightarrow{l-k} L_3 \xrightarrow{k} L_4: zL_3 \subset L_0 \text{ and } zL_4 \subset L_1 \text{ and } \dim \ker(z|_{L_4/L_0}) \ge l+1\}$$
$E_1$ is a codimension one subvariety of the smooth variety
$$\{L_0 \xrightarrow{k} L_1 \xrightarrow{l-k} L_3 \xrightarrow{k} L_4: zL_3 \subset L_0 \text{ and } zL_4 \subset L_1 \}$$
($E_1$ is cut out by $\det(z): \det(L_4/L_3) \rightarrow \det(L_1/L_0) \{2k\}$). Thus $E_1$ is a local complete intersection (and hence Cohen-Macaulay). Since $E_1$ is smooth in codimension one it is also normal. 

Now $E_1$ has a resolution of singularities given by
$$E_2 = \{L_0 \xrightarrow{k} L_1 \xrightarrow{l-k} L_3 \xrightarrow{1} L_{3 \frac{1}{2}} \xrightarrow{k-1} L_4: zL_{3 \frac{1}{2}} \subset L_0 \text{ and } zL_4 \subset L_1 \}.$$
The dualizing sheaf of $E_2$ is
\begin{eqnarray*}
\omega_{E_2} &\cong&
[\det(L_3/L_1) \det(L_{3\frac{1}{2}}/L_3)^{-(l-k)}] \otimes [\det(L_4/L_{3\frac{1}{2}})^{m-l} \det(z^{-1}L_1/L_4)^{-(k-1)})] \otimes \\
& & [\det(L_1/L_0)^{l-k+1} \det(L_{3\frac{1}{2}}/L_1)^{-k})] \otimes [\det(L_{3\frac{1}{2}})^{m-l-1} \det(z^{-1}L_0/L_{3\frac{1}{2}})^{-l-1}]
\end{eqnarray*}
where we describe $E_2$ as an iterated fibration by first forgetting $L_3$, then $L_4$, $L_1$ and $L_{3\frac{1}{2}}$, in that order. We do not need to simplify this expression completely-
if we factor out all copies of $\det(L_{3\frac{1}{2}})$ so that the remaining factors do not contain any expression involving $L_{3\frac{1}{2}}$, we find that we have collected
$$-(l-k) -(m-l) + (-k) + (m-l-1) + (l+1) = 0$$
copies of $\det(L_{3\frac{1}{2}})$. This means that $\omega_{E_2}$ is the pullback of a line bundle on $E_1$. So, by the projection formula, the pushforward of $\omega_{E_2}$ is a line bundle on $E_1$.  Thus
$\omega_{E_1}$ and $R^0 \pi_* \omega_{E_2}$ agree up to tensoring with a line bundle, as desired.

Next we consider
$$P_2 = \{L_0 \overset{k}{\underset{k}{\rightrightarrows}} \begin{matrix} L_1' \\ L_1 \end{matrix} \overset{1}{\underset{1}{\rightrightarrows}} L_2 \xrightarrow{l-k-1} L_3 \xrightarrow{k} L_4: zL_4 \subset L_1 \cap L_1' \text{ and } zL_3 \subset L_0 \text{ and } \dim \ker(z|_{L_4/L_0}) \ge l+1 \}.$$
Recall that $P_1$ and $P_2$ are the irreducible components of 
$$P= P_1\cup P_2 = \{L_0 \overset{k}{\underset{k}{\rightrightarrows}} \begin{matrix} L_1' \\ L_1 \end{matrix} \overset{1}{\underset{1}{\rightrightarrows}} L_2 \xrightarrow{l-k-1} L_3 \xrightarrow{k} L_4: zL_4 \subset L_1, zL_4 \subset L_1' \text{ and } zL_3 \subset L_0 \}$$
and that 
$$P_1 = \{L_0 \xrightarrow{k} L_1 \xrightarrow{1} L_2 \xrightarrow{l-k-1} L_3 \xrightarrow{k} L_4: zL_4 \subset L_1 \text{ and } zL_3 \subset L_0 \}$$
is smooth. Now $P$ is naturally a codimension $k$ subvariety of the Grassmannian bundle $\G(k,L_2/L_0) \rightarrow P_1$. Explicitly, it is cut out by the section of the map 
$z:L_4/L_3 \rightarrow L_2/L_1' \{2\}$, where $L_1'$ is the tautological bundle of the Grassmannian bundle $\G(k,L_2/L_0)$. It follows that $P_1 \cup P_2$ is a local complete intersection. Now the intersection $P_1 \cap P_2$ is isomorphic to 
$$\{L_0 \xrightarrow{k} L_1 \xrightarrow{1} L_2 \xrightarrow{l-k-1} L_3 \xrightarrow{k} L_4: zL_4 \subset L_1, zL_3 \subset L_0 \text{ and } \dim \ker(z|_{L_4/L_0}) \ge l+1 \}$$
and has codimension one inside $P_1$, since it is carved out as the degeneracy locus of $z: L_4/L_3 \rightarrow L_1/L_0 \{2\}$.   Moreover, the locus where $P_2$ is singular is contained in the locus where $\dim \ker(z|_{L_4/L_0}) \ge l+2$; this locus has codimension at least $2$. Thus by Lemma \ref{lem:CM}, we find that $P_2$ is normal and Cohen-Macaulay. 
 
Now $P_2$ has a natural resolution $\pi: Q \rightarrow P_2$,
$$Q := \{L_0 \xrightarrow{k-1} L_{\frac{1}{2}} \overset{1}{\underset{1}{\rightrightarrows}} \begin{matrix} L_1' \\ L_1 \end{matrix} \overset{1}{\underset{1}{\rightrightarrows}} L_2 \xrightarrow{l-k-1} L_3 \xrightarrow{k} L_4: zL_4 \subset L_{\frac{1}{2}} \text{ and } zL_{3} \subset L_0 \},$$
where the map $\pi$ forgets $L_{\frac{1}{2}}$.
We claim that $\omega_{P_2}$ and $R^0 \pi_*(\omega_Q)$ agree up to tensoring with a line bundle. 
The canonical bundle of $Q$ is
\begin{eqnarray*}
\omega_{Q} 
&\cong& (\det(L_1/L_{\frac{1}{2}}) \det(L_2/L_1)^{-1}) (\det(L_1'/L_{\frac{1}{2}}) \det(L_2/L_1')^{-1}) (\det(L_2/L_{\frac{1}{2}})^{l-k-1} \det(L_3/L_2)^{-2}) \\
& & (\det(L_4/L_3)^{m-l-1} \det(z^{-1}L_{\frac{1}{2}}/L_4)^{-k}) (\det(L_{1/2}/L_0)^{l-k+1} \det(L_3/L_{\frac{1}{2}})^{-k+1}) \\
& & (\det(L_3/L_0)^{m-l} \det(z^{-1}L_0/L_3)^{-l})
\end{eqnarray*}
obtained by forgetting $L_1, L_1', L_2, L_4, L_{\frac{1}{2}}$ and $L_3$ in that order. We collect all the factors involving $\det(L_\frac{1}{2}/L_0)$, finding
$$-1-1-(l-k-1)-k+(l-k+1)-(-k+1) = -1$$
such factors. So, by the projection formula, 
$$R^0 \pi_*(\omega_{Q})\simeq R^0 \pi_*(\det(L_{\frac{1}{2}}/L_0)^{-1}) \otimes \sL$$ 
for some line bundle $\sL$.

Now consider the zero locus of the map $L_1'/L_{\frac{1}{2}} \rightarrow L_2/L_1$ inside $Q$. This is the divisor $D$ where $L_1=L_1'$ ({\it i.e.} the intersection $P_1 \cap P_2 \subset P_2$). The short exact sequence
$$\O_Q(-D) \rightarrow \O_Q \rightarrow \O_D$$
produces a long exact sequence
$$R^0\pi_* \O_Q(-D) \rightarrow \O_{P_2} \rightarrow \O_{\pi(D)} \rightarrow \dots$$
where we use that $R^0\pi_*(\O_Q) = \O_{P_2}$ ($P_2$ is normal) and $R^0 \pi_* \O_D = \O_{\pi(D)}$ ($\pi(D)$ is also normal). Thus $R^0 \pi_* \O_Q(-D) \cong \O_{P_2}(-\pi(D))$ since $\pi(D) \subset P_2$ is the divisor in $P_2$ where $L_1 = L_1'$. Since 
$$\O_Q(-D) = (L_2/L_1)^{-1} \otimes (L_1'/L_{\frac{1}{2}}) \cong (L_2/L_1)^{-1} \det(L_1'/L_0) \det(L_{\frac{1}{2}}/L_0)^{-1}$$ 
this means that $R^0 \pi_*(\omega_{Q})$ is equal to $\O_{P_2}(-\pi(D))$, up to tensoring with a line bundle. Thus we need to show that $\O_{P_2}(-\pi(D))$ is equal to $\omega_{P_2}$ up to tensoring by a line bundle (notice that we do not know if $\O_{P_2}(-\pi(D))$). 

On the other hand, we can compute $\omega_{P_2}$ as follows. Consider the standard exact triangle
$$\O_{P_1}(-(P_1 \cap P_2)) \rightarrow \O_{P_1 \cup P_2} \rightarrow \O_{P_2}$$
inside the Grassmannian bundle $\G(k,L_2/L_0)$ over $P_1$ mentioned above.  Now $P_1$ is smooth (so $\O_{P_1}(-(P_1 \cap P_2))$ is locally free) and $P_1 \cup P_2$ is a local complete intersection (so $\omega_{P_1 \cup P_2}$ is a locally free sheaf). Dualizing the above triangle we get the exact triangle
$$\omega_{P_2} \rightarrow \omega_{P_1 \cup P_2} \rightarrow \omega_{P_1}(P_1 \cap P_2)$$
where $\omega_{P_2}$ is a sheaf (since $P_2$ is Cohen-Macaulay) and the right two terms are locally free sheaves. We may compare this exact triangle with the standard exact triangle
$$\O_{P_2}(-\pi(D)) \rightarrow \O_{P_1 \cup P_2} \rightarrow \O_{P_1}$$
 and find that $\omega_{P_2}$ is isomorphic to $\O_{P_2}(- \pi(D))$ up to tensoring by a line bundle. This means that $R^0 \pi_*(\omega_Q)$ and $\omega_{P_2}$ agree up to tensoring with a line bundle, as desired.
 
Finally, we show that
$$W = \{L_0 \xrightarrow{k-1} L_1' \overset{1}{\underset{1}{\rightrightarrows}} \begin{matrix} L_1 \\ L_1'' \end{matrix} \overset{l}{\underset{l}{\rightrightarrows}} L_2: zL_1 \subset L_0 \text{ and } zL_1'' \subset L_0 \text{ and } zL_2 \subset L_1'\}$$
has rational singularities.  Note that we have $k\le l$ in the above description of $W$.
$W$ is naturally a subvariety of the Grassmannian bundle 
$$\G(1,L_2/L_1') \rightarrow \{ L_0 \xrightarrow{k-1} L_1' \xrightarrow{1} L_1 \xrightarrow{l} L_2: zL_1 \subset L_0 \text{ and } zL_2 \subset L_1'\}.$$
The fibres of $W$ are generically $\G(1,(l+1)-(k-1))$, since $\ker z|_{L_2/L_0}$ on the base generically has dimension $l+1$. This means that $W \subset \G(1,L_2/L_1')$ has codimension $l-(l-k+1)=k-1$ and is cut out by the zero section of $z:L_1''/L_1 \rightarrow L_1'/L_0 \{2\}$, where $L_1''$ is the tautological bundle on $\G(1,L_2/L_1')$. Thus $W$ is a local complete intersection. The singular locus of $W$ is contained in the locus where 
$\dim (\ker z|_{L_2/L_0}) \ge l+2$, which is codimension $\ge 2$. Thus $W$ is Gorenstein and normal. 

Now a natural resolution of $W$ is 
$$W' = \{L_0 \xrightarrow{k-1} L_1' \overset{1}{\underset{1}{\rightrightarrows}} \begin{matrix} L_1 \\ L_1'' \end{matrix} \overset{1}{\underset{1}{\rightrightarrows}} L_{\frac{3}{2}} \xrightarrow{l-1} L_2: zL_{\frac{3}{2}} \subset L_0 \text{ and } zL_2 \subset L_1'\}$$
whose canonical bundle is
\begin{eqnarray*}
\omega_{W'} &\cong& (\det(L_1/L_1') \det(L_{\frac{3}{2}}/L_1)^{-1}) (\det(L_1''/L_1) \det(L_{\frac{3}{2}}/L_1'')^{-1}) \\
& & (\det(L_2/L_{\frac{3}{2}})^{m-l-1} \det(z^{-1}L_1'/L_2)^{-l+1}) (\det(L_1'/L_0)^{2} \det(L_{\frac{3}{2}}/L_1')^{-k+1}) \\
& & (\det(L_{\frac{3}{2}}/L_0)^{m-k-1} \det(z^{-1}L_0/L_{\frac{3}{2}})^{-k-1}.
\end{eqnarray*}
In order to obtain this description of $\omega_{W'}$, we forget $L_1,L_1'',L_2,L_1'$ and $L_{\frac{3}{2}}$ in that order. 
Collecting all factors involving $\det(L_{\frac{3}{2}}/L_0)$ we find 
$$-1-1-(m-l-1)+(-k+1)+(m-k-1)-(-k-1) = l-k \ge 0$$
such terms. 
Now the exceptional divisor of the forgetful map $\pi: W' \rightarrow W$ which forgets $L_{\frac{3}{2}}$ is the locus $D \subset W'$ where $L_1 = L_1''$. $D$ is cut out by the zero locus of the natural map $L_1/L_1' \rightarrow L_{\frac{3}{2}}/L_1''$. Thus $\O_{W'}(D) \cong \det(L_{\frac{3}{2}}/L_1'') \det(L_1/L_1')^{-1}$. But $R^0 \pi_*(\O_{W'}((l-k)(D))) \cong \O_W$ since $l-k \ge 0$. Thus 
\begin{eqnarray*}
R^0 \pi_*(\det(L_{\frac{3}{2}}/L_0)^{l-k}) 
&\cong& R^0 \pi_*(\O_{W'}((l-k)(D)) \otimes \det(L_1/L_1')^{l-k} \det(L_1''/L_0)^{l-k}) \\
&\cong& \det(L_1/L_1')^{l-k} \det(L_1''/L_0)^{l-k} 
\end{eqnarray*}
by the projection formula. Hence $R^0 \pi_*(\omega_{W'})$ is a line bundle, which completes the proof. 
\end{proof}

\begin{Lemma}\label{lem:CM} Suppose $X_1 \cup X_2 \subset X$ is a local complete intersection inside the smooth variety $X$. If $X_1$ is smooth, $X_1 \cap X_2 \subset X_1$ is a divisor and $X_2$ is smooth in codimension one then $X_2$ is normal and Cohen-Macaulay.
\end{Lemma}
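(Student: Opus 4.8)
The plan is to reduce the Cohen--Macaulayness of $X_2$ to the vanishing of a single ``defect'' sheaf supported along $X_1\cap X_2$, and then to kill that sheaf using the codimension-one regularity hypothesis. First I would work over connected components of $X_1\cup X_2$, so that (since $X_1\cup X_2$ is a local complete intersection in the smooth $X$, hence Cohen--Macaulay, hence equidimensional and catenary) $X_1$, $X_2$ and $X_1\cup X_2$ are all pure of one dimension $d$. If $X_1\cap X_2=\emptyset$ on such a component, then that component of $X_2$ is itself a local complete intersection, hence Cohen--Macaulay, and normal by Serre's criterion together with the hypothesis, so we are done; thus assume $X_1\cap X_2\neq\emptyset$. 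Away from $X_1\cap X_2$ one has $\O_{X_2}=\O_{X_1\cup X_2}$ locally, so $X_2$ is already Cohen--Macaulay there, and all the content is concentrated along $X_1\cap X_2$.

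Next I would use the short exact sequence
\[
0\to \O_{X_1}(-(X_1\cap X_2))\to \O_{X_1\cup X_2}\to \O_{X_2}\to 0,
\]
identifying the kernel, via $I_{X_2/(X_1\cup X_2)}\cong I_{(X_1\cap X_2)/X_1}$, with the ideal sheaf of the Cartier divisor $X_1\cap X_2$ on the smooth variety $X_1$ (this is exactly where ``$X_1$ smooth'' and ``$X_1\cap X_2$ a divisor'' are used to make it a line bundle). Applying $\mathbf{R}\mathcal{H}om_X(-,\omega_X[\dim X])$ and using that $X_1\cup X_2$ is Gorenstein (being lci) while $X_1$ is smooth, the resulting dual triangle reads
\[
\omega_{X_2}^\bullet \to \omega_{X_1\cup X_2}[d]\xrightarrow{\ \phi\ }\omega_{X_1}(X_1\cap X_2)[d]\to \omega_{X_2}^\bullet[1],
\]
with $\omega_{X_2}^\bullet$ the dualizing complex of $X_2$ and $\phi$ the natural restriction-type map. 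Hence $\omega_{X_2}^\bullet$ is concentrated in degrees $-d$ and $-d+1$, with $\mathcal{H}^{-d}=\omega_{X_2}$ and $\mathcal{H}^{-d+1}=\operatorname{coker}(\phi)=:\mathcal{G}$; equivalently, the depth count from the same sequence gives $\operatorname{depth}\O_{X_2,x}\geq\dim\O_{X_2,x}-1$ everywhere, so $X_2$ is ``Cohen--Macaulay or one short'' and $\mathcal{G}$ is precisely the locus of the defect. Thus $X_2$ is Cohen--Macaulay iff $\mathcal{G}=0$.

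Now I would pin down $\operatorname{Supp}(\mathcal{G})$. Since $X_1$ is smooth, both $\omega_{X_1\cup X_2}|_{X_1}$ and $\omega_{X_1}(X_1\cap X_2)$ are line bundles on $X_1$, and $\phi$ factors through a map of these; so $\mathcal{G}$ is locally cyclic, of the form $\O_{X_1}/(t)$ up to twist. Moreover $\phi$ is an isomorphism over $X_1\setminus(X_1\cap X_2)$, where locally $X_1\cup X_2=X_1$ and $X_1\cap X_2=\emptyset$, so $\operatorname{Supp}(\mathcal{G})\subseteq X_1\cap X_2$. Consequently, if $\mathcal{G}\neq 0$, then $\phi$ does not vanish on any component of $X_1$, so $\operatorname{Supp}(\mathcal{G})$ is a nonempty effective Cartier divisor on the smooth $X_1$, in particular pure of codimension one in $X_1$ and contained in $X_1\cap X_2$. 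Its generic points are therefore generic points of components of $X_1\cap X_2$; since $X_1\cap X_2$ is pure of dimension $d-1$ while $X_2$ is pure of dimension $d$, such a point is a codimension-one point of $X_2$. But at a codimension-one point $X_2$ is regular by hypothesis, hence Cohen--Macaulay, so $\mathcal{G}$ vanishes there --- contradicting that the point lies in $\operatorname{Supp}(\mathcal{G})$. Hence $\mathcal{G}=0$, $X_2$ is Cohen--Macaulay, and then $(R_1)$ together with $(S_2)$ (from Cohen--Macaulayness) gives that $X_2$ is normal by Serre's criterion.

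The main obstacle is the third step. The naive depth chase from the exact sequence only yields Serre's condition $(S_{d-1})$ for $X_2$, which does not suffice to conclude Cohen--Macaulayness (or even normality). What makes the argument go through is that duality identifies the non-Cohen--Macaulay locus of $X_2$ \emph{exactly} as the support of the cyclic sheaf $\mathcal{G}$, which, being the cokernel of a map of line bundles on the smooth $X_1$, is either empty or of pure codimension one --- precisely the shape needed to invoke the codimension-one regularity hypothesis. The degree bookkeeping in the dualizing triangle (that $\omega_{X_2}^\bullet$ really is concentrated in the two claimed degrees, uniformly in the point) and the identification of the kernel of $\O_{X_1\cup X_2}\to\O_{X_2}$ with $\O_{X_1}(-(X_1\cap X_2))$ are the routine verifications.
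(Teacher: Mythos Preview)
Your argument is correct and follows essentially the same route as the paper: both proofs dualize the short exact sequence $0\to\O_{X_1}(-(X_1\cap X_2))\to\O_{X_1\cup X_2}\to\O_{X_2}\to 0$, identify the possible obstruction to Cohen--Macaulayness of $X_2$ with the cokernel of the resulting map $\omega_{X_1\cup X_2}\to\omega_{X_1}(X_1\cap X_2)$ of line bundles on $X_1$, and then kill this cokernel by noting it would have pure codimension-one support while $X_2$ is already regular (hence Cohen--Macaulay) at such points. Your version is somewhat more explicit about the degree bookkeeping in the dualizing complex and about why the cokernel, if nonzero, must be supported in pure codimension one, but the key idea is the same.
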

\begin{proof}
By definition, $X_2$ is Cohen-Macaulay if its dualizing complex $\omega_{X_2}$ is a sheaf. Now consider the exact triangle
$$\O_{X_1}(-X_1 \cap X_2) \rightarrow \O_{X_1 \cup X_2} \rightarrow \O_{X_2}.$$
Dualizing we get the exact triangle
\begin{equation}\label{eq:ses}
\omega_{X_2} \rightarrow \omega_{X_1 \cup X_2} \xrightarrow{r} \omega_{X_1}(X_1 \cap X_2).
\end{equation}
The second term is a line bundle on $X_1 \cup X_2$ since $X_1 \cup X_2$ is a local complete intersection (and hence Gorenstein) while the third term is also a line bundle ($X_1$ is smooth so the divisor $X_1 \cap X_2 \subset X_1$ is Cartier). 

Now any map $r: \omega_{X_1 \cup X_2} \rightarrow \omega_{X_1}(X_1 \cap X_2)$ as in (\ref{eq:ses}) is the composition 
$$\omega_{X_1 \cup X_2} \rightarrow \omega_{X_1 \cup X_2}|_{X_1} \rightarrow \omega_{X_1}(X_1 \cap X_2)$$
where the first map is restriction. This means that $r$ is either surjective or the cokernel has codimension one since the restriction map is surjective and the second map is between two line bundles on the smooth variety $X_1$. But $\omega_{X_2}$ is a sheaf over the locus where $X_2$ is smooth. The complement of this locus has codimension at least two so this means that the support of $\H^1(\omega_{X_2})$ has codimension at least two. Thus $r$ must be surjective and $\omega_{X_2}$ must be a sheaf.

Finally, $X_2$ is normal since normality is the same as being $S_2$ (which is implied by being Cohen-Macaulay) and smooth in codimension one. 
\end{proof}

Condition (i) (for a geometric categorical $\sl_2$ action) follows since $Y(k,l)$ is proper. 

Finally, to see condition (vi) suppose $k \ge l$. Then the image of $\supp \sE^{(r)}(k,l)$ inside $Y(k,l)$ contains points where the restriction of $z$ to $L_2$ has kernel of dimension $k+r$. On the other hand, the image of $\supp \sE^{(r+s)}(k,l)$ inside $Y(k,l)$ is contained in the locus where the restriction of $z$ to $L_2$ has kernel of dimension $\ge k+r+s$. Thus the former cannot be contained in the latter. This proves the first part of condition (vi) -- the second part is proved similarly. 

As a corollary of this and the propositions from section \ref{sec:proofs}, we conclude the proof of Theorem \ref{thm:main}.

\begin{proof}[Proof of Theorem \ref{thm:main}]
The requirements for a geometric categorical $\C^\times$ equivariant $\sl_2$ action are given in section \ref{se:geomsl2def}, and are checked in Propositions \ref{prop:Eadj}, \ref{prop:Ecomps1}, \ref{prop:Ecomps2} and \ref{prop:comm1} and in the comments above. 
\end{proof}

\end{document}